\newcommand{\rn}{{\mathbb{R}^n}}
\newcommand{\w}{\omega}
\newcommand{\W}{\Omega}
\newcommand{\eps}{\varepsilon}
\newcommand{\sgn}{\operatorname{sgn}}
\def\R{{\mathbb {R}}}
\def\N{{\mathbb {N}}}
\newtheorem{theorem}{Theorem}[section]
\newtheorem*{theorem*}{Theorem}
\newtheorem{lemma}[theorem]{Lemma}
\newtheorem{corollary}[theorem]{Corollary}
\theoremstyle{remark}
\newtheorem{remark}[theorem]{Remark}
\theoremstyle{definition}
\newtheorem{definition}[theorem]{Definition}
\numberwithin{equation}{section}
\numberwithin{table}{section}
\numberwithin{figure}{section}
\title[Fractional Laplacian]{Regularity Theory and High Order Numerical Methods for the (1D)-Fractional Laplacian}
\author[Acosta,  Borthagaray, Bruno,  Maas]{G. Acosta,  J. P. Borthagaray, O. Bruno and M. Maas}
\address[Gabriel Acosta and Juan Pablo Borthagaray]{IMAS - CONICET and Departamento de Matem\'a\-tica, FCEyN - Universidad de Buenos Aires, Ciudad Universitaria, Pabell\'on I  (1428) Buenos Aires, Argentina.}
\address[Oscar Bruno]{California Institute of Technology. Pasadena, California.}
\address[Mart\'in Maas]{IAFE - CONICET and Departamento de Matem\'a\-tica, FCEyN - Universidad de Buenos Aires, Ciudad Universitaria, Pabell\'on I  (1428) Buenos Aires, Argentina.}
\email[G. Acosta]{gacosta@dm.uba.ar}
\email[J. P. Borthagaray]{jpbortha@dm.uba.ar}
\email[O. Bruno]{obruno@caltech.edu}
\email[M. Maas]{mdmaas@iafe.uba.ar}
\thanks{This research has been partially supported by CONICET under grant PIP 2014-2016 11220130100184CO}
\subjclass[2010]{65R20, 35B65, 33C45}
\keywords{Fractional Laplacian, Hypersingular Integral Equations, High order numerical methods, Gegenbauer Polynomials}
\begin{document}


\begin{abstract}
  This paper presents regularity results and associated high-order
  numerical methods for one-dimensional Fractional-Laplacian
  boundary-value problems.  On the basis of a factorization of
  solutions as a product of a certain edge-singular weight $\w$ times
  a ``regular'' unknown, a characterization of the regularity of
  solutions is obtained in terms of the smoothness of the
  corresponding right-hand sides.  In particular, for right-hand sides
  which are analytic in a Bernstein Ellipse, analyticity in the same
  Bernstein Ellipse is obtained for the ``regular'' unknown.
  Moreover, a sharp Sobolev regularity result is presented which
  completely characterizes the co-domain of the Fractional-Laplacian
  operator in terms of certain weighted Sobolev spaces introduced in
  (Babu\v{s}ka and Guo, SIAM J. Numer. Anal. 2002). The present
  theoretical treatment relies on a full eigendecomposition for a
  certain weighted integral operator in terms of the Gegenbauer
  polynomial basis.  The proposed Gegenbauer-based Nystr\"om numerical
  method for the Fractional-Laplacian Dirichlet problem, further, is
  significantly more accurate and efficient than other algorithms
  considered previously. The sharp error estimates presented in this
  paper indicate that the proposed algorithm is spectrally accurate,
  with convergence rates that only depend on the smoothness of the
  right-hand side. In particular, convergence is exponentially fast
  (resp. faster than any power of the mesh-size) for analytic
  (resp. infinitely smooth) right-hand sides. The properties of the
  algorithm are illustrated with a variety of numerical results.
\end{abstract}

\maketitle
\section{Introduction} \label{sec:intro} Over the last few years
nonlocal models have increasingly impacted upon a number of important
fields in science and technology.
The evidence of anomalous diffusion processes, for example, has been
found in several physical and social environments \cite{Klafter,
  MetzlerKlafter}, and corresponding transport models have been
proposed in various areas such as electrodiffusion in nerve
cells~\cite{AnomalousElectrodiffusion} and ground-water solute
transport~\cite{BensonWheatcraft}.  Non-local models have also been
proposed in fields such as finance~\cite{CarrHelyette,RamaTankov} and
image processing~\cite{GattoHesthaven,GilboaOsher}.
One of the fundamental non-local operators is the Fractional Laplacian
$(-\Delta)^s$ ($0 < s < 1$) which, from a probabilistic point of view
corresponds to the infinitesimal generator of a stable L\'evy process
\cite{Valdinoci}.

The present contribution addresses theoretical questions and puts
forth numerical algorithms for the numerical solution of the Dirichlet
problem
\begin{equation}
\left\lbrace
  \begin{array}{rl}
      (-\Delta)^s u = f & \mbox{ in }\W, \\
      u = 0 & \mbox{ in }\W^c  \\
      \end{array}
    \right.
\label{eq:fraccionario_dirichlet}
\end{equation}
on a bounded one-dimensional domain $\Omega$ consisting of a union of
a finite number of intervals (whose closures are assumed mutually
disjoint).  This approach to enforcement of (nonlocal) boundary
conditions in a bounded domain $\W$ arises naturally in connection
with the long jump random walk approach to the Fractional
Laplacian~\cite{Valdinoci}. In such random walk processes, jumps of
arbitrarily long distances are allowed. Thus, the payoff of the
process, which corresponds to the boundary datum of the Dirichlet
problem, needs to be prescribed in $\W^c$.

Letting $s$ and $n$ denote a real number ($0<s<1$) 
and the spatial dimension ($n=1$ throughout this paper), and
using the normalization constant~\cite{Hitchhikers}
$$C_n(s) = \frac{2^{2s} s \Gamma(s+\frac{n}{2})}{\pi^{n/2} \Gamma(1-s)},  $$
the fractional-Laplacian operator $(-\Delta)^s$ is given by
\begin{equation}
(-\Delta)^s u (x) = C_n(s) \mbox{ P.V.} \int_\rn \frac{u(x)-u(y)}{|x-y|^{n+2s}} \, dy.
\label{eq:fraccionarioyo}
\end{equation}

\begin{remark}\label{rem_1}
  A number of related operators have been considered in the
  mathematical literature.  Here we mention the so called
  \emph{spectral} fractional Laplacian ${\mathcal L}_s$, which is
  defined in terms of eigenfunctions and eigenvalues $(v_n,\lambda_n)$
  {\em of the standard Laplacian ($-\Delta$) operator with Dirichlet
    boundary conditions in $\partial \W$}: ${\mathcal L}_s[v_n] =
  \lambda_n^s v_n$.  The operator ${\mathcal L}_s$ is different from
  $(-\Delta)^s$---since, for example, ${\mathcal L}_s$ admits smooth
  eigenfunctions (at least for smooth domains) $\Omega$ while
  $(-\Delta)^s$ does not; see~\cite{ServadeiValdinoci}.
\end{remark}

\begin{remark}
  A finite element approach for problems concerning the operator
  ${\mathcal L}_s$ (cf. Remark~\ref{rem_1}) was proposed
  in~\cite{NochettoOtarola} on the basis of extension ideas first
  introduced in~\cite{CaffarelliSilvestre} for the operator
  $(-\Delta)^s$ in $\mathbb{R}^n$ which were subsequently developed
  in~\cite{ConcaveConvex} for the bounded-domain operator ${\mathcal
    L}_s$.  As far as we know, however, approaches based on extension
  theorems have not as yet been proposed for the Dirichlet
  problem~\eqref{eq:fraccionario_dirichlet}.
\end{remark}

Various numerical methods have been proposed recently for equations
associated with the Fractional Laplacian $(-\Delta)^s$ in bounded
domains.  Restricting attention to one-dimensional problems, Huang and
Oberman \cite{HuangOberman} presented a numerical algorithm that
combines finite differences with a quadrature rule in an unbounded
domain. Numerical evidence provided in that paper for smooth
right-hand sides (cf. Figure 7(b) therein) indicates convergence to
solutions of~\eqref{eq:fraccionario_dirichlet} with an order
$\mathcal{O}(h^s)$, in the infinity norm, as the meshsize $h$ tends to
zero (albeith orders as high as $\mathcal{O}(h^{3-2s})$ are
demonstrated in that contribution for singular right-hand sides $f$
that make the solution $u$ smooth).  Since the order $s$ lies between
zero and one, the $\mathcal{O}(h^s)$ convergence provided by this
algorithm can be quite slow, specially for small values of $s$.
D'Elia and Gunzburger~\cite{DEliaGunzburger}, in turn, proved
convergence of order $h^{1/2}$ for a finite-element solution of an
associated one-dimensional nonlocal operator that approximates the
one-dimensional fractional Laplacian. These authors also suggested
that an improved solution algorithm, with increased convergence order,
might require explicit consideration of the solution's boundary
singularities.
The contribution~\cite{AcostaBorthagaray}, finally, studies the
regularity of solutions of the Dirichlet
problem~\eqref{eq:fraccionario_dirichlet} and it introduces certain graded
meshes for integration in one- and two-dimensional domains. 
The rigorous error bounds and numerical
experiments provided in~\cite{AcostaBorthagaray} demonstrate an
accuracy of the order of $h^{1/2} |\log h|$ and $h|\log h|$ for all $s$, in
certain weighted Sobolev norms, for solutions obtained by means of
uniform and graded meshes, respectively.

%
%

%
Difficulties in the numerical treatment of the Dirichlet
problem~\eqref{eq:fraccionario_dirichlet} stem mainly from the
singular character of the solutions of this problem near boundaries. A
recent regularity result in this regards was provided
in~\cite{RosOtonSerra}.  In particular, this contribution establishes
the global H\"older regularity of solutions of the general
$n$-dimensional version of equation~\eqref{eq:fraccionario_dirichlet}
($n\ge 1$) and it provides a certain boundary regularity result: the
quotient $u(x)/\w^s(x)$ remains bounded as $x\to \partial \Omega$,
where $\omega$ is a smooth function that behaves like
$\mbox{dist}(x,\Omega^c)$ near $\partial \W$. This result was then
generalized in~\cite{Grubb}, where, using pseudo-differential
calculus, a certain regularity result is established in terms of
H\"ormander $\mu$-spaces: in particular, for the regular Sobolev
spaces $H^r(\W)$, it is shown that if $f\in H^r(\W)$ for some $r>0$
then the solution $u$ may be written as $w^s\phi + \chi$, where $\phi
\in H^{r+s}(\W)$ and $\chi \in H^{r+2s}_0(\W)$. Interior regularity
results for the Fractional Laplacian and related operators have also
been the object of recent studies~\cite{Albanese2015,Cozzi2016}.


The sharp regularity results put forth in the present contribution, in
turn, are related to but different from those mentioned above. Indeed
the present regularity theorems show that the fractional Laplacian in
fact induces a {\em bijection} between certain weighted Sobolev
spaces. Using an appropriate version of the Sobolev lemma put forth in
Section~\ref{regularity}, these results imply, in particular, that the
regular factors in the decompositions of fractional Laplacian solutions
admit $k$ continuous derivatives for a certain value of $k$ that
depends on the regularity of the right-hand side. Additionally, this
paper establishes the operator regularity in spaces of analytic
functions: denoting by $A_\rho$ the space of analytic functions in the
Bernstein Ellipse $\mathcal{E}_\rho$, the weighted operator $K_s(\phi)
= (-\Delta)^s(\w^s \phi)$ maps $A_\rho$ into itself bijectively.  In
other words, for a right-hand side which is analytic in a Bernstein
Ellipse, the solution is characterized as the product of an analytic
function in the same Bernstein Ellipse times an explicit singular
weight.

The theoretical treatment presented in this paper is essentially
self-contained. This approach recasts the problem as an integral
equation in a bounded domain, and it proceeds by computing certain
singular exponents $\alpha$ that make $(-\Delta)^s (\omega^\alpha
\phi(x))$ analytic near the boundary for every polynomial $\phi$. As
shown in Theorem~\ref{teo1} a infinite sequence of such values of
$\alpha$ is given by $\alpha_n = s + n$ for all $n\geq 0$. Morever,
Section~\ref{two_edge_sing} shows that the weighted operator $K_s$
maps polynomials of degree $n$ into polynomials of degree $n$---and it
provides explicit closed-form expressions for the images of each
polynomial $\phi$.
 
A certain hypersingular form we present for the operator $K_s$ leads
to consideration of a weighted $L^2$ space wherein $K_s$ is
self-adjoint.  In view of the aforementioned polynomial-mapping
properties of the operator $K_s$ it follows that this operator is
diagonal in a basis of orthogonal polynomials with respect to a
corresponding inner product.  A related diagonal form was obtained in
the recent independent contribution~\cite{Dyda2016} by employing
arguments based on Mellin transforms. The diagonal
form~\cite{Dyda2016} provides, in particular, a family of explicit
solutions in the $n$ dimensional ball in $\R^n$, which are given by
products of the singular term $(1-|z|^2)^s$ and general Meijer
G-Functions. The diagonalization approach proposed in this paper,
which is restricted to the one-dimensional case, is elementary and is
succinctly expressed: the eigenfunctions are precisely the Gegenbauer
polynomials.

This paper is organized as follows: Section~\ref{integraleq} casts the
problem as an integral equation, and Section~\ref{diagonalform}
analyzes the boundary singularity and produces a diagonal form for the
single-interval problem.  Relying on the Gegenbauer eigenfunctions and
associated expansions found in Section~\ref{diagonalform},
Section~\ref{regularity} presents the aforementioned Sobolev and
analytic regularity results for the solution $u$, and it includes a
weighted-space version of the Sobolev lemma. Similarly, utilizing
Gegenbauer expansions in conjunction with Nystr\"om discretizations
and taking into account the analytic structure of the edge
singularity, Section~\ref{HONM} presents a highly accurate and
efficient numerical solver for Fractional-Laplacian equations posed on
a union of finitely many one-dimensional intervals. The sharp error
estimates presented in Section~\ref{HONM} indicate that the proposed
algorithm is spectrally accurate, with convergence rates that only
depend on the smoothness of the right-hand side. In particular,
convergence is exponentially fast (resp. faster than any power of the
mesh-size) for analytic (resp. infinitely smooth) right-hand sides.  A
variety of numerical results presented in Section~\ref{num_res}
demonstrate the character of the proposed solver: the new algorithm is
significantly more accurate and efficient than those resulting from
previous approaches.



\section{Hypersingular Bounded-Domain  Formulation\label{integraleq}}

In this section the one-dimensional operator 
\begin{equation}
(-\Delta)^s u(x) = C_1(s) \mbox{ P.V.} \int_{-\infty}^\infty \left( u(x)-u(x-y) \right) |y|^{-1-2s} dy
\label{frac1d}
\end{equation} 
together with Dirichlet boundary conditions outside the bounded domain
$\W$, is expressed as an integral over $\W$. The Dirichlet
problem~\eqref{eq:fraccionario_dirichlet} is then identified with a
hypersingular version of Symm's integral equation; the precise
statement is provided in Lemma~\ref{lemma_hypersingular} below.  In
accordance with Section~\ref{sec:intro}, throughout this paper we
assume the following definition holds.
\begin{definition}\label{union_intervals_def}
  The domain $\W$ equals a finite union
\begin{equation}
\label{union_intervals}
\W = \bigcup_{i=1}^M (a_i, b_i)
\end{equation}
of open intervals $(a_i,b_i)$ with disjoint closures. We denote
$\partial\W =\{a_1,b_1,\dots, a_M,b_M\}$.
\end{definition}

\begin{definition}
  $C^2_0(\W)$ will denote, for a given open set $\W \subset
  \mathbb{R}$, the space of all functions $u \in C^2(\W) \cap
  C(\mathbb{R})$ that vanish outside of $\W$. For $\W =(a,b)$ we will
  simply write $C^2_0((a,b)) = C^2_0(a,b)$.
\end{definition}
The following lemma provides a useful expression for the Fractional
Laplacian operator in terms of a certain integro-differential
operator. For clarity the result is first presented in the following
lemma for the case $\W=(a,b)$; the generalization to 
domains $\W$ of the form~\eqref{union_intervals} then follows easily in
Corollary~\ref{coro_lemma_hypersingular}.

\begin{lemma}
\label{lemma_hypersingular}
Let $s \in (0,1)$, let $u \in
C^2_0(a,b)$ such that $|u'|$ is integrable in $(a,b)$, let $x \in \mathbb{R}, x\not \in \partial \W=\{a,b\}$,
and define
\begin{equation}\label{eq:c_s}
C_s = \frac{C_1(s)}{2s(1-2s)} = -\Gamma(2s-1)\sin(\pi s) / \pi \quad (s\ne 1/2);
\end{equation}
We then have
\begin{itemize}
\item [---] Case $s \ne \frac{1}{2}$:
\begin{equation}
\label{eq:hypersingular}
(-\Delta)^s u (x) = C_s \frac{d}{dx} \int_{a}^b |x-y|^{1-2s} \frac{d}{dy} u(y) dy.
\end{equation}
\item [---] Case $s=\frac{1}{2}$:
\begin{equation}
\label{eq:hypersingular_s12}
(-\Delta)^{1/2} u (x) = \frac{1}{\pi} \frac{d}{dx} \int_{a}^b \ln  |x-y| \frac{d}{dy} u(y) dy.
\end{equation}
\end{itemize}
\end{lemma}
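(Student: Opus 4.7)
The plan is to derive the hypersingular representation by integration by parts in the symmetric principal-value form of $(-\Delta)^s u$ and then to recognize the resulting kernel as an $x$-derivative. Starting from \eqref{frac1d}, the change of variables $z = x-y$ first rewrites the operator in the equivalent form
\[
(-\Delta)^s u(x) = C_1(s)\,\mbox{P.V.}\!\int_{-\infty}^\infty \frac{u(x)-u(z)}{|x-z|^{1+2s}}\, dz;
\]
because $u$ is supported in $[a,b]$ the only subtlety is the singularity at $z=x$, and this symmetric form is well adapted to integration by parts.

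For the case $s \ne 1/2$ the central identity is
\[
|x-y|^{-1-2s} \,=\, -\frac{1}{2s}\frac{\partial}{\partial y}\bigl[\operatorname{sgn}(y-x)\,|x-y|^{-2s}\bigr], \qquad y\ne x.
\]
I would insert this into the principal-value integral, excise an $\eps$-neighborhood of $y=x$, and integrate by parts in $y$. The boundary terms at $\pm\infty$ vanish by compact support of $u$; those at $y = x\pm\eps$ combine to $\tfrac{C_1(s)}{2s}[2u(x)-u(x-\eps)-u(x+\eps)]\,\eps^{-2s} = O(\eps^{2-2s})$, which tends to zero as $\eps\to 0$ thanks to the $C^2$ regularity of $u$ near $x$ (assured by the hypothesis $x\notin\partial\W$ and $s<1$). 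Passing to the limit and using that $u'\equiv 0$ outside $(a,b)$ yields
\[
(-\Delta)^s u(x) \,=\, -\frac{C_1(s)}{2s}\,\mbox{P.V.}\!\int_a^b u'(y)\,\operatorname{sgn}(y-x)\,|x-y|^{-2s}\, dy.
\]

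Next, the identity $\partial_x|x-y|^{1-2s} = -(1-2s)\operatorname{sgn}(y-x)|x-y|^{-2s}$ rewrites the integrand as $C_s\,u'(y)\,\partial_x|x-y|^{1-2s}$ with $C_s$ as in \eqref{eq:c_s}. The final step is to exchange $d/dx$ and the $y$-integral to reach $C_s\,\frac{d}{dx}\int_a^b |x-y|^{1-2s}u'(y)\,dy$. For $x\notin[a,b]$ this is immediate; for $x\in(a,b)$ I would form the difference quotient of $F(x):=\int_a^b|x-y|^{1-2s}u'(y)\,dy$, split at $|y-x|<\delta$ and $|y-x|>\delta$, and use $u'(y) = u'(x) + O(|y-x|)$ together with the antisymmetry of $\operatorname{sgn}(y-x)|x-y|^{-2s}$ in $y$ about $x$ to recover the PV integral as $\delta\to 0$. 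The case $s=1/2$ follows the same template, with $(1-2s)^{-1}|x-y|^{1-2s}$ replaced by $-\log|x-y|$ (since $\partial_x\log|x-y| = 1/(x-y)$) and $C_1(1/2) = 1/\pi$ reproducing the prefactor in \eqref{eq:hypersingular_s12}.

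The main obstacle is the differentiation-under-the-integral step for $s \ge 1/2$, where the kernel $\partial_x|x-y|^{1-2s}$ fails to be locally integrable in $y$ and the interchange must be interpreted through the PV regularization. A clean route is to verify that $F$ is locally absolutely continuous with almost-everywhere derivative given by the PV integral, and then to use the $C^2$ regularity of $u$ near $x$ to upgrade this to pointwise equality at the point $x$ in question.
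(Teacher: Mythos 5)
Your argument is correct and takes a genuinely more streamlined route than the paper's. You integrate by parts once, symmetrically, over $\mathbb{R}\setminus B_\eps(x)$ in the form $C_1(s)\,\mathrm{P.V.}\!\int_{\R}(u(x)-u(z))|x-z|^{-1-2s}\,dz$, so that the two boundary contributions at $x\pm\eps$ automatically combine into the second finite difference $[2u(x)-u(x-\eps)-u(x+\eps)]\,\eps^{-2s}=O(\eps^{2-2s})$; the decomposition of $\R$ at $a$ and $b$, the explicit "far" integrals and the cancellation bookkeeping ($g_a^b$ against the boundary terms of~\eqref{eq:bnd_term}) all disappear, and the cases $x\notin[a,b]$ and $x\in(a,b)$ are handled uniformly. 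The paper instead splits the line at $x-b$ and $x-a$, evaluates the tail integrals exactly, and exhibits those cancellations by hand; its payoff is that each term is elementary and the role of $u(a)=u(b)=0$ is made visible. In your version that fact is still being used, but tacitly: since $u'$ need not extend continuously to $a$ and $b$, the integration by parts must be split at those two points, and the resulting interior boundary terms cancel precisely because $u$ is continuous with $u(a)=u(b)=0$; worth stating. For the final interchange of $d/dx$ with the principal value, your difference-quotient sketch (split at $|y-x|\lessgtr\delta$, expand $u'(y)=u'(x)+O(|y-x|)$, use the odd symmetry of $\sgn(x-y)|x-y|^{-2s}$) is essentially the same mechanism the paper packages as Lemma~\ref{lemma_exchangePV} and proves in Appendix~\ref{app_exchangePV} via $v(y)=v(x)+(y-x)R(x,y)$ and~\cite[Th.\ 7.17]{baby_rudin}; either form works, and neither is trivial for $s\ge 1/2$ so the step does deserve a real proof rather than a gesture. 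One small slip: in the $s=1/2$ limit the kernel is $+\log|x-y|$, not $-\log|x-y|$ (one has $\lim_{s\to 1/2}\bigl(|x-y|^{1-2s}-1\bigr)/(1-2s)=\log|x-y|$, and the additive constant is killed by $d/dx$), consistent with~\eqref{eq:hypersingular_s12}; this does not affect the structure of your argument.
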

\begin{proof}
  We note that, since the support of $u=u(x)$ is contained in $[a,b]$,
  for each $x \in \mathbb{R}$ the support of the translated function
  $u=u(x-y)$ as a function of $y$ is contained in the set
  $[x-b,x-a]$. Thus, using the decomposition $\R = [x-b,x-a] \cup
  (-\infty,x-b) \cup (x-a,\infty)$ in~\eqref{frac1d}, we obtain the
  following expression for $(-\Delta)^s u (x)$:
\begin{equation}\label{spliting}
  C_1(s) \Bigg( \mbox{P.V.} \int_{x-b}^{x-a} ( u(x)-u(x-y) ) |y|^{-1-2s} dy + 
  \left[\int_{-\infty}^{x-b}  dy + \int_{x-a}^\infty dy\right] u(x) |y|^{-1-2s} \Bigg). 
\end{equation}

We consider first the case $x\not\in [a,b]$, for
which~\eqref{spliting} becomes
\begin{equation}\label{spliting_2}
  -C_1(s)\Bigg( \mbox{P.V.} \int_{x-b}^{x-a} u(x-y) |y|^{-1-2s} dy\Bigg). 
\end{equation}
Noting that the integrand~\eqref{spliting_2} is smooth, integration by
parts yields
\begin{equation}\label{parts_firstterm}
  \frac{C_1(s)}{2s} \int_{x-b}^{x-a} u'(x-y) \sgn(y)|y|^{-2s} dy
\end{equation}
(since $u(a)=u(b)=0$), and, thus, letting $z=x-y$ we obtain
\begin{equation}\label{singleinterval_smooth}
  (-\Delta)^s u(x) = \frac{C_1(s)}{2s} \int_{a}^{b} \sgn(x-z)|x-z|^{-2s} u'(z) dz\quad , \quad x\not\in [a,b].
\end{equation}
Then, letting
\begin{equation*}\label{K_def}
\Phi_s(y) = \left\lbrace
  \begin{array}{ll}
    |y|^{1-2s}/(1-2s)  & \mbox{ for } s\in (0,1), \, s\ne 1/2 \\
    \log|y| & \mbox{ for } s = 1/2 
      \end{array}
    \right. ,
\end{equation*}
noting that
\begin{equation}
\label{sgn_x_der}
\sgn(x-z)|x-z|^{-2s} =  
\frac{\partial}{\partial x} \Phi_s(x-z),
\end{equation}
replacing~\eqref{sgn_x_der} in ~\eqref{singleinterval_smooth} and
exchanging the $x$-differentiation and $z$-integration yields the
desired expressions~\eqref{eq:hypersingular}
and~\eqref{eq:hypersingular_s12}. This completes the proof in the case
$x\not\in [a,b]$.

Let us now consider the case $x\in(a,b)$.  The second term
in~\eqref{spliting} can be computed exactly; we clearly have
\begin{equation}\label{eq:bnd_term}
\left[\int_{-\infty}^{x-b} dy + \int_{x-a}^\infty dy\right] u(x) |y|^{-1-2s} = 
\left[ \frac{u(x)}{2s} \sgn(y) |y|^{-2s} \bigg|_{y=x-b}^{y=x-a} \right] .
\end{equation}
In order to integrate by parts in the P.V. integral in~\eqref{spliting} consider the set $$ D_\eps =
[x-b,x-a] \setminus (-\eps,\eps).$$
Then, defining
\begin{equation*}
\label{parts_secondterm_0}
Q_\epsilon (x) = \int_{D_\epsilon } \left( u(x)-u(x-y) \right) |y|^{-1-2s} dy 
\end{equation*}
integration by parts yields
\begin{equation*}
\label{parts_secondterm}
Q_\epsilon  (x) = 
-\frac{1}{2s}\left ( g_{a}^b(x) - h_{a}^b(x) - \frac{\delta^2_\eps}{\eps^{2s}} -  \int_{D_\epsilon} u'(x-y) \sgn(y)|y|^{-2s} dy \right)
\end{equation*}
where $\delta_\eps = u(x+\eps) + u(x-\eps) - 2 u(x)$, $g_{a}^b(x) =
u(x)(|x-a|^{-2s} + |x-b|^{-2s})$ and $h_{a}^b(x) = u(a)|x-a|^{-2s} +
u(b)|x-b|^{-2s}$. 

The term $h_{a}^b(x)$ vanishes since $u(a)=u(b)=0$. The contribution
$g_{a}^b(x)$, on the other hand, exactly cancels the boundary terms in
equation~\eqref{eq:bnd_term}.
%
%
%
%
For the values $x\in (a,b)$ under
consideration, a Taylor expansion in $\eps$ around $\eps=0$
additionally tells us that the quotient
$\frac{\delta^2_\eps}{\eps^{2s}}$ tends to $0$ as $\eps\to
0$. Therefore, using the change of variables $z=x-y$ and letting
$\eps\to 0$ we obtain a principal-value expression valid for $x \ne a, x\ne b$:
\begin{equation}
\label{eq:pv_singleinterval}
(-\Delta)^s u (x) = \frac{C_1(s)}{2s} \mbox{ P.V.} \int_{a}^b \sgn(x-z)|x-z|^{-2s} u'(z) dz.
\end{equation}
Replacing~\eqref{sgn_x_der} in ~\eqref{eq:pv_singleinterval} then
yields~\eqref{eq:hypersingular} and~\eqref{eq:hypersingular_s12},
provided that the derivative in $x$ can be interchanged with the
P.V. integral. This interchange is indeed correct, as it follows from
an application of the following Lemma to the function $v=u'$. The
proof is thus complete.
\end{proof}
\begin{lemma}
\label{lemma_exchangePV}
Let $\Omega \subset \mathbb{R}$ be as indicated in
Definition~\ref{union_intervals_def} and let $v\in C^1(\Omega)$ such
that $v$ is absolutely integrable over $\Omega$, and let
$x\in\Omega$. Then the following relation holds:
\begin{equation} \label{der_pv}
 P.V. \int_\Omega \frac{\partial}{\partial x} \Phi_s(x-y) v(y) dy = \frac{\partial}{\partial x} \int_\Omega \Phi_s(x-y) v(y) dy
\end{equation}
\end{lemma}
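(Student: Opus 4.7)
The plan is to prove \eqref{der_pv} by regularizing the kernel $\Phi_s$ near its singularity, interchanging derivative and integral on the regularized problem, and then passing to the limit. Concretely, fix an even $\phi \in C_c^\infty(\mathbb{R})$ with $\phi \equiv 1$ on $[-1/2,1/2]$ and $\mathrm{supp}(\phi) \subset (-1,1)$, set $\phi_\eta(z) := \phi(z/\eta)$, and define
$$F_\eta(x) := \int_\Omega (1-\phi_\eta(x-y))\,\Phi_s(x-y)\,v(y)\,dy.$$
Since the truncated kernel is $C^1$ in $x$ with a fixed integrable dominating function, $F_\eta$ is $C^1$ and differentiation under the integral is immediate.

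The two key convergence claims are (i) $F_\eta(x) \to F(x) := \int_\Omega \Phi_s(x-y)\,v(y)\,dy$ pointwise, and (ii) $F_\eta'(x) \to G(x) := \text{P.V.}\int_\Omega \partial_x\Phi_s(x-y)\,v(y)\,dy$ uniformly on compact subsets of the interior of $\Omega$. Claim (i) is immediate from dominated convergence. For claim (ii), the product rule splits $F_\eta'$ into two terms; the one involving $\phi_\eta'$ is shown to be $O(\eta^{2-2s})$ --- hence vanishing --- by exploiting that $\phi_\eta'$ is odd and $\Phi_s$ even (so $\int \phi_\eta'(z)\Phi_s(z)\,dz = 0$ on symmetric neighborhoods of $0$, allowing us to subtract $v(x)$) together with the Lipschitz bound $|v(y)-v(x)| \le L|y-x|$ on the $O(\eta)$-support of $\phi_\eta'$. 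The other term is shown by dominated convergence to tend to the absolutely convergent rewriting
$$G(x) = v(x)\,c(x) + \int_\Omega \partial_x\Phi_s(x-y)\,(v(y)-v(x))\,dy,$$
where $c(x) := \text{P.V.}\int_\Omega \partial_x\Phi_s(x-y)\,dy$ is an explicit smooth function of $x$ (computable in closed form from the union-of-intervals structure of $\Omega$ and, for small $\eta$, equal to $\int_\Omega (1-\phi_\eta(x-y))\partial_x\Phi_s(x-y)\,dy$ by the same oddness argument). The $C^1$-regularity of $v$ reduces the integrand in the remainder to $O(|y-x|^{1-2s})$, which is integrable since $s<1$; all estimates are uniform in $x$ on compact subsets of the interior.

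Once (i) and (ii) are in place, the standard theorem on interchanging limits and derivatives for sequences of $C^1$ functions yields that $F$ is differentiable on the interior of $\Omega$ with $F'(x) = G(x)$, which is precisely \eqref{der_pv}. The main obstacle is the non-integrability of $\partial_x\Phi_s(x-y) = \sgn(x-y)|x-y|^{-2s}$ at $y=x$ when $s \ge 1/2$, which prevents a naive application of dominated convergence to the differentiated integrand; it is overcome by combining the $C^1$-regularity of $v$ (which lowers the effective order of the singularity by one) with the oddness-based cancellations that the even mollifier $\phi_\eta$ is specifically designed to preserve.
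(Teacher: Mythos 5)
Your proof is correct and follows essentially the same approach as the paper's: truncate the P.V.\ integral near the singularity, differentiate under the integral sign for the truncated functional, and invoke the theorem on exchanging limits and derivatives for $C^1$ sequences after establishing uniform convergence of the differentiated truncations on compact subsets --- which in both arguments is achieved by subtracting $v(x)$ from $v(y)$ (exploiting the Lipschitz regularity of $v$ to lower the order of the singularity) and using the even/odd symmetry of $\Phi_s$ and its derivative to handle the pure $v(x)$ contribution. The paper uses the sharp cutoff $\Omega \setminus B_\eps(x)$ where you use a smooth mollifier $\phi_\eta$, but the mechanism of the argument is identical.
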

\begin{proof}
See Appendix~\ref{app_exchangePV}.
\end{proof}

\begin{corollary}
\label{coro_lemma_hypersingular}
Given a domain $\W$ as in Definition~\eqref{union_intervals_def}, and
with reference to equation~\eqref{eq:c_s}, for $u\in C_0^2(\W)$ and
$x\not\in \partial \Omega$ we have
\begin{itemize}
\item [---] Case $s \ne \frac{1}{2}$:
\begin{equation}
\label{eq:mutli_int}
(-\Delta)^s u (x) = C_s \frac{d}{dx} \sum_{i=1}^M \int_{a_i}^{b_i} |x-y|^{1-2s} \frac{d}{dy} u(y) dy 
\end{equation}
\item [---] Case $s=\frac{1}{2}$:
\begin{equation}
\label{eq:mutli_int_s12}
 (-\Delta)^{1/2} u (x) = \frac{1}{\pi} \frac{d}{dx} \sum_{i=1}^M \int_{a_i}^{b_i} \ln  |x-y| \frac{d}{dy} u(y) dy 
\end{equation}
\end{itemize}
for all $x\in\mathbb{R}\setminus \partial \Omega = \cup_i^M \{a_i, b_i\}$.
\end{corollary}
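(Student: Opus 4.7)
The plan is to reduce the multi-interval statement to the single-interval case already handled in Lemma~\ref{lemma_hypersingular} by decomposing $u$ into pieces supported on the individual subintervals. For each $i \in \{1,\dots,M\}$ I would introduce
\begin{equation*}
u_i(x) = \begin{cases} u(x), & x \in [a_i,b_i], \\ 0, & x \notin [a_i,b_i]. \end{cases}
\end{equation*}
Since the closures of the intervals $(a_i,b_i)$ are pairwise disjoint and $u$ vanishes on $\W^c$, we have $u = \sum_{i=1}^M u_i$ pointwise on $\mathbb{R}$.

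Next I would verify that each $u_i$ belongs to $C^2_0(a_i,b_i)$. Smoothness on $(a_i,b_i)$ is inherited from $u \in C^2(\W)$; continuity of $u_i$ at the endpoints follows from the global continuity of $u$ together with $u(a_i)=u(b_i)=0$, which holds because $a_i,b_i \in \W^c$; and continuity at points outside $[a_i,b_i]$ is immediate, since the disjoint-closure hypothesis furnishes a neighborhood on which $u_i \equiv 0$. Integrability of $|u_i'|$ over $(a_i,b_i)$ is inherited from the same property of $u$, so all hypotheses of Lemma~\ref{lemma_hypersingular} are satisfied for each $u_i$.

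With this decomposition in place, the linearity of the principal-value integral in~\eqref{frac1d} gives $(-\Delta)^s u(x) = \sum_{i=1}^M (-\Delta)^s u_i(x)$ whenever each term is defined. The assumption $x \notin \partial\W = \bigcup_{i=1}^M \{a_i,b_i\}$ ensures that $x$ avoids the endpoints of every subinterval, so Lemma~\ref{lemma_hypersingular} applies to each $u_i$ at this common $x$ and yields~\eqref{eq:hypersingular} (or~\eqref{eq:hypersingular_s12} in the case $s=1/2$) for $u_i$ on $(a_i,b_i)$. Noting that $u_i'(y) = u'(y)$ for $y \in (a_i,b_i)$ and summing over $i$ produces~\eqref{eq:mutli_int} and~\eqref{eq:mutli_int_s12}.

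I do not expect any substantive obstacle here: the argument is a direct linearity reduction, and the only step deserving care is the verification that each $u_i$ lies in $C^2_0(a_i,b_i)$. That check is precisely where the disjoint-closure hypothesis of Definition~\ref{union_intervals_def} is used, since it decouples the behavior of $u$ near the endpoints of different subintervals and allows the zero extension of each restriction to remain continuous on all of $\mathbb{R}$.
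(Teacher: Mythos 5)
Your proposal is correct and follows the same decomposition-and-linearity approach as the paper's own proof; you simply spell out in greater detail the verification that each restriction $u_i$ lies in $C^2_0(a_i,b_i)$ and that linearity applies, both of which the paper leaves implicit.
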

\begin{proof}
  Given $u\in C_0^2(\W)$ we may write $u=\sum_i^M u_i$ where, for
  $i=1,\dots,M$ the function $u_i=u_i(x)$ equals $u(x)$ for $x \in
  (a_i,b_i)$ and and it equals zero elsewhere. In view of
  Lemma~\ref{lemma_hypersingular} the result is valid for each
  function $u_i$ and, by linearity, it is thus valid for the function
  $u$. The proof is complete.
\end{proof}
\begin{remark}\label{bound_rem}
  A point of particular interest arises as we examine the character of
  $(-\Delta)^s u$ with $u\in C_0^2(\W)$ for $x$ at or near $\partial
  \W$. Both Lemma~\ref{lemma_hypersingular} and its
  corollary~\ref{coro_lemma_hypersingular} are silent in these
  regards. For $\W = (a,b)$, for example, inspection of
  equation~\eqref{eq:pv_singleinterval} leads one to generally expect
  that $(-\Delta)^s u(x)$ has an infinite limit as $x$ tends to each
  one of the endpoints $a$ or $b$. But this is not so for all
  functions $u\in C_0^2(\W)$. Indeed, as established in
  Section~\ref{diag}, the subclass of functions in $C_0^2(\W)$ for
  which there is a finite limit forms a dense subspace of a relevant
  weighted $L^2$ space.  In fact, a dense subset of functions exists
  for which the image of the fractional Laplacian can be extended as
  an analytic function in the complete complex $x$ variable plane.
  But, even for such functions, definition~\eqref{frac1d} still
  generically gives $(-\Delta)^s u (x) =\pm \infty$ for $x=a$ and
  $x=b$. Results concerning functions whose Fractional Laplacian blows
  up at the boundary can be found in~\cite{Abatangelo2015}.
\end{remark}
The next section concerns the single-interval case ($M=1$
in~\eqref{eq:mutli_int},~\eqref{eq:mutli_int_s12}).  Using
translations and dilations the single interval problem in any given
interval $(a_1, b_1)$ can be recast as a corresponding problem in any
desired open interval $(a,b)$. For notational convenience two
different selections are made at various points in
Section~\ref{diagonalform}, namely $(a,b)=(0,1)$ in
Sections~\ref{edge_sing_left} and~\ref{two_edge_sing}, and $(a,
b)=(-1,1)$ in Section~\ref{diag}.  The conclusions and results can
then be easily translated into corresponding results for general
intervals; see for example Corollary~\ref{diag_ab}.

\section{Boundary Singularity and Diagonal Form of the Single-Interval
  Operator \label{diagonalform}}

Lemma~\ref{lemma_hypersingular} expresses the action of the operator
$(-\Delta)^s$ on elements $u$ of the space $C_0^2(\W)$ in terms of the
integro-differential operators on the right-hand side of
equations~\eqref{eq:hypersingular} and~\eqref{eq:hypersingular_s12}. A
brief consideration of the proof of that lemma shows that for such
representations to be valid it is essential for the function $u$ to
vanish on the boundary---as all functions in $C_0^2(a,b)$ do, by
definition. Section~\ref{edge_sing_left} considers, however, the
action under the integral operators on the right-hand side of
equations~\eqref{eq:hypersingular} and~\eqref{eq:hypersingular_s12} on
certain functions $u$ defined on $\W = (a,b)$ {\em which do not
  necessarily vanish at $a$ or $b$}. To do this
we study the closely related integral operators
 \begin{align}
   S_s[u](x) &:= C_s \int_a^b \left( |x-y|^{1-2s} - (b-a)^{1-2s} \right) u(y) dy \;\;  ( s \ne \frac{1}{2} ), \label{Tsdef_eq1}\\
   S_{\frac{1}{2}}[u](x) &:= \frac{1}{\pi} \int_a^b \log\left(\frac{|x-y|}{b-a}\right) u(y) dy,  \label{Tsdef_eq2}\\
   T_s[u](x) &:=  \frac{\partial}{\partial x} S_s\left[
     \frac{\partial}{\partial y} u(y) \right](x) .\label{Tsdef_eq3}
\end{align}

\begin{remark}\label{const_term}
  The addition of the constant term $-(b-a)^{1-2s}$ in the
  integrand~\eqref{Tsdef_eq1} does not have any effect in the
  definition of $T_s$: the constant $-(b-a)^{1-2s}$ only results in the addition
  of a constant term on the right-hand side of~\eqref{Tsdef_eq1},
  which then yields zero upon the outer differentiation in
  equation~\eqref{Tsdef_eq3}. The integrand~\eqref{Tsdef_eq1} is
  selected, however, in order to insure that the kernel of $S_s$
  (namely, the function $C_s \left( |x-y|^{1-2s} -(b-a)^{1-2s}\right)$) tends to
  the kernel of $S_{\frac{1}{2}}$ in~\eqref{Tsdef_eq2} (the function
  $\frac{1}{\pi} \log(|x-y|/(b-a))$) in the limit as $s\to \frac{1}{2}$.
\end{remark}
\begin{remark}\label{Ts_PV}
  In view of Remark~\ref{const_term} and Lemma~\ref{lemma_exchangePV},
  for $u\in C^2(a,b)$ we additionally have
\begin{equation}\label{Tsdef2}
  T_s[u](x) = \frac{C_1(s)}{2s} \mbox{ P.V.} \int_{a}^b
  \sgn(x-z)|x-z|^{-2s} u'(z) dz.
\end{equation}
\end{remark}
\begin{remark}
\label{remark_Ts}
  The operator $T_s$ coincides with $(-\Delta)^s$ for functions $u$
  that satisfy the hypothesis of Lemma~\ref{lemma_hypersingular}, but
  $T_s$ does not coincide with $(-\Delta)^s$ for functions $u$ which,
  such as those we consider in Section~\ref{edge_sing_left} below, do
  not vanish on $\partial \W = \{a,b\}$.
\end{remark}

\begin{remark}\label{remark_openarcs}
  The operator $S_{\frac{1}{2}}$ coincides with Symm's integral
  operator~\cite{Symm}, which is important in the context of
  electrostatics and acoustics in cases where Dirichlet boundary
  conditions are posed on infinitely-thin open
  plates~\cite{OpenArcsRadioScience,OpenArcsTheoretical,Symm,YanSloan}. The
  operator $T_{\frac{1}{2}}$, on the other hand, which may be viewed
  as a {\em hypersingular version} of the Symms operator
  $S_{\frac{1}{2}}$, similarly relates to electrostatics and
  acoustics, in cases leading to Neumann boundary conditions posed on
  open-plate geometries. The operators $S_{s}$ and $T_{s}$ in the
  cases $s \ne \frac{1}{2}$ can thus be interpreted as generalizations
  to fractional powers of classical operators in potential theory,
  cf. also Remark~\ref{remark_Ts}.
\end{remark}
Restricting attention to $\W = (a,b) = (0,1)$ for notational
convenience and without loss of generality,
Section~\ref{edge_sing_left} studies the image $T_s[u_\alpha]$ of the
function 
\begin{equation}\label{u_alpha}
  u_\alpha(y) =y^\alpha
\end{equation}
with $\Re\alpha > 0$---which is smooth in $(0,1)$, but which has an
algebraic singularity at the boundary point $y=0$. That section shows
in particular that, whenever $\alpha = s +n$ for some $n\in
\mathbb{N}\cup \{ 0 \}$, the function $T_s[u_\alpha](x)$ can be
extended analytically to a region containing the boundary point $x=0$.
Building upon this result (and assuming once again $\W = (a, b) =
(0,1)$), Section~\ref{two_edge_sing}, explicitly evaluates the images
of functions of the form $v(y) = y^{s+n}(1-y)^s$ ($n\in \mathbb{N}\cup
\{ 0 \}$), which are singular (not smooth) at the two boundary points
$y=0$ and $y=1$, under the integral operators $T_s$ and $S_s$. The
results in Section~\ref{two_edge_sing} imply, in particular, that the
image $T_s[v]$ for such functions $v$ can be extended analytically to
a region containing the interval $[0,1]$.  Reformulating all of these
results in the general interval $\W = (a,b)$, Section~\ref{diag} then
derives the corresponding single-interval diagonal form for weighted
operators naturally induced by $T_s$ and $S_s$.
\subsection{Single-edge singularity\label{edge_sing_left}}
With reference to equations~\eqref{Tsdef2} and \eqref{eq:c_s}, and
considering the aforementioned function $u_\alpha(y) = y^\alpha$ we
clearly have
\begin{equation*}\label{eq:TsNs}
  T_s[u_\alpha](x) =\alpha (1-2s) C_s N_\alpha^s(x) \;\; \mbox{, where}
\end{equation*}
\begin{equation}
  N_\alpha^s(x) := P.V.\int_{0}^{1} \sgn(x-y)|x-y|^{-2s} y^{\alpha-1} dy .
\label{eq:def_Ns}
\end{equation} 
As shown in Theorem~\ref{teo1} below (equation~\eqref{eq:Ns_Betas}),
the functions $N_\alpha^s$ and (thus) $ T_s[u_\alpha]$ can be
expressed in terms of classical special functions whose singular
structure is well known. Leading to the proof of that theorem, in what
follows we present a sequence of two auxiliary lemmas.
\begin{lemma}
\label{lemma0_analytic}
Let $ E = (a, b)\subset\mathbb{R}$, and let $C\subseteq \mathbb{C}$
denote an open subset of the complex plane. Further, let $f=f(t,c)$ be
a function defined in $E\times C$, and assume 1)~$f$ is continuous in
$E\times C$, 2)~$f$ is analytic with respect to $c=c_1+ic_2\in C$ for
each fixed $t\in E$, and 3)~$f$ is ``uniformly integrable over
compact subsets of $C$''---in the sense that for every compact set $K
\subset C$ the functions
\begin{equation}
\label{hab_eta}
h_a(\eta,c) = \left| \int_a^{a+\eta} f(t,c) dt \,\right|\quad \mbox{and}\quad
h_b(\eta,c) = \left| \int_{b-\eta}^b f(t,c) dt \,\right|
\end{equation}
tend to zero uniformly for $c\in K$ as $\eta\to 0^+$.  Then the
function
$$F(c) := \int_E f(t,c) dt$$
is analytic throughout $C$.
\end{lemma}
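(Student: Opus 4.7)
The plan is to approximate $F$ by integrals over compact subintervals of $E$ and invoke the Weierstrass theorem on uniform limits of analytic functions. Specifically, for each $\eta \in (0, (b-a)/2)$, I would introduce
$$F_\eta(c) := \int_{a+\eta}^{b-\eta} f(t,c)\, dt,$$
and then establish two facts: (i) each $F_\eta$ is analytic on $C$, and (ii) $F_\eta \to F$ uniformly on compact subsets of $C$ as $\eta \to 0^+$.

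For (i), I would use a Morera-type argument. Fix any compact set $K \subset C$, and let $\gamma$ be any closed triangle contained in $K$. Since $f$ is continuous on the compact set $[a+\eta, b-\eta] \times \gamma$, Fubini's theorem permits the interchange
$$\oint_\gamma F_\eta(c)\, dc = \int_{a+\eta}^{b-\eta} \left( \oint_\gamma f(t,c)\, dc \right) dt.$$
The inner integral vanishes by Cauchy's theorem, since $f(t,\cdot)$ is analytic in $C$ for each fixed $t$. Continuity of $F_\eta$ on $C$ (again by dominated convergence, using continuity of $f$ on the compact rectangle) together with Morera's theorem then gives analyticity of $F_\eta$.

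For (ii), the hypothesis supplies the estimate
$$|F(c) - F_\eta(c)| \le h_a(\eta, c) + h_b(\eta, c),$$
and the assumed uniform convergence $h_a(\eta,\cdot), h_b(\eta,\cdot) \to 0$ on any compact $K \subset C$ gives exactly the required uniform convergence $F_\eta \to F$ on $K$.

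Conclusion: since $F$ is the locally uniform limit on $C$ of the analytic functions $F_\eta$, the Weierstrass theorem on uniform limits of analytic functions yields that $F$ is analytic on $C$. The main (and only mildly delicate) obstacle is the Morera/Fubini step establishing analyticity of the truncated integrals; once that is in place, the remainder of the argument is essentially a direct translation of the endpoint uniform-integrability hypothesis into locally uniform convergence of $F_\eta$ to $F$.
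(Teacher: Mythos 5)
Your proof is correct, but it takes a somewhat different route from the paper's. You truncate the interval and form $F_\eta(c) = \int_{a+\eta}^{b-\eta} f(t,c)\,dt$, establish analyticity of each $F_\eta$ separately via the Morera--Fubini argument (continuity gives $\oint_\gamma F_\eta = \int \oint_\gamma f = 0$ on closed triangles), and then pass to the limit uniformly on compacta using the endpoint hypothesis. The paper instead bypasses the intermediate objects $F_\eta$ entirely: for each $n$ it chooses $\eta_n$ so the endpoint tails are below $1/n$ uniformly on the compact set $K$, and then uses the uniform continuity of $f$ on $[a+\eta_n,b-\eta_n]\times K$ to produce a single Riemann sum $R_n^h(c)$ whose error against $F(c)$ is below $3/n$ uniformly on $K$; since Riemann sums are finite linear combinations of the analytic functions $c\mapsto f(t_j,c)$, the Weierstrass theorem applies directly. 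The paper's argument is thus more self-contained (no Morera, no Fubini), at the cost of a diagonal-type construction tying $\eta_n$ to a meshsize $h_n$. Yours is more modular and closer to the way such parameter-dependent integrals are usually handled, but it relies on two auxiliary theorems where the paper uses one. One small point of care in your version: in the Morera step you should take closed triangles whose solid (filled) region lies in $C$, not merely in an arbitrary compact $K$, so that Cauchy's theorem applies to $f(t,\cdot)$ on the interior; this is a matter of phrasing rather than a gap.
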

\begin{proof} 
  Let $K$ denote a compact subset of $C$. For each $c\in K$ and each
  $n\in\mathbb{N}$ we consider Riemann sums $R_n^h(c)$ for the
  integral of $f$ in the interval $[a+\eta_n,b-\eta_n]$, where
  $\eta_n$ is selected in such a way that $h_a(\eta_n,c) \leq 1/n$ and
  $h_b(\eta_n,c) \leq 1/n$ for all $c\in K$ (which is clearly possible
  in view of the hypothesis~\eqref{hab_eta}). The Riemann sums are
  defined by $R_n^h(c)=h \sum_{j=1}^Mf(t_j,c)$, with $ h = (b-a
  +2\eta_n)/M$ and $t_{j+1} - t_j =h $ for all $j$.

  Let $n\in \mathbb{N}$ be given. In view of the uniform continuity of
  $f(t,c)$ in the compact set $[a+\eta_n,b-\eta_n]\times K$, the
  difference between the maximum and minimum of $f(t,c)$ in each
  integration subinterval $(t_j,t_{j+1})\subset [a+\eta_n,b-\eta_n]$
  tends uniformly to zero for all $c\in K$ as the integration meshsize
  tends to zero. It follows that a meshsize $h_n$ can be found for
  which the approximation error in the corresponding Riemann sum
  $R_n^h(c)$ is {\em uniformly small for all $c\in K$}:
\[
\left| \int_{a+\eta_n}^{b-\eta_n} f(t,c) dt - R_n^h(c)\right| < \frac 1n
\quad \mbox{for all $c\in K$ and for all $n\in \mathbb{N}$}.
\]
Thus $F(c)$ equals a uniform limit of analytic functions over every
compact subset of $C$, and therefore $F(c)$ is itself analytic
throughout $C$, as desired.
\end{proof}

\begin{lemma}\label{lemma_analytic}
  Let $x\in (0,1)$ and let $g(s,\alpha) = N_\alpha^s(x)$ be defined
  by~\eqref{eq:def_Ns} for complex values of $s$ and $\alpha$
  satisfying $ \Re s < 1$ and $\Re \alpha > 0$. We then have:
\begin{itemize}
\item[(\it{i})] For each fixed $\alpha$ such that $\Re \alpha > 0$,
  $g(s,\alpha)$ is an analytic function of $s$ for $\Re s < 1$; and
\item[(\it{ii})] For each fixed $s$ such that $\Re s < 1$,
  $g(s,\alpha)$ is an analytic of $\alpha$ for $\Re \alpha > 0$.
\end{itemize}
In other words, for each fixed $x\in (0,1)$ the function $
N_\alpha^s(x)$ is jointly analytic in the $(s,\alpha)$ domain $D = \{
\Re s < 1\} \times \{\Re \alpha > 0\} \subset \mathbb{C}^2$.
\end{lemma}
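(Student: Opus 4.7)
The plan is to reduce the result to an application of Lemma~\ref{lemma0_analytic} after a regularization that removes the P.V.\ character of the integral. Since $\sgn(x-y)|x-y|^{-2s}$ is odd about $y=x$, I would first write
\begin{equation*}
N_\alpha^s(x) \;=\; I_1(s,\alpha) \,+\, x^{\alpha-1}\,J(s),
\end{equation*}
where $I_1(s,\alpha)=\int_0^1\sgn(x-y)|x-y|^{-2s}\bigl(y^{\alpha-1}-x^{\alpha-1}\bigr)\,dy$ (no principal value needed) and $J(s)=\mathrm{P.V.}\!\int_0^1\sgn(x-y)|x-y|^{-2s}\,dy$. The P.V.\ in $J(s)$ is computed explicitly: splitting at $x\pm\eps$, the $\eps^{1-2s}$ boundary contributions cancel, yielding $J(s)=\bigl(x^{1-2s}-(1-x)^{1-2s}\bigr)/(1-2s)$ for $s\ne 1/2$, with a removable singularity at $s=1/2$ (whose limit is $\log(x/(1-x))$). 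Thus $J(s)$ is analytic throughout $\{\Re s<1\}$, and since $x>0$ is fixed, $\alpha\mapsto x^{\alpha-1}=e^{(\alpha-1)\log x}$ is entire, so the product $x^{\alpha-1}J(s)$ is manifestly jointly analytic on $D$.

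For $I_1(s,\alpha)$, I would split the interval as $(0,1)=(0,x)\cup(x,1)$ and apply Lemma~\ref{lemma0_analytic} on each subinterval (once with $s$ varying for fixed $\alpha$, once with $\alpha$ varying for fixed $s$). The pointwise continuity in $y$ and analyticity in each complex parameter are clear. The uniform-integrability hypotheses at the three relevant endpoints are checked as follows. Near $y=x$, a first-order expansion gives $y^{\alpha-1}-x^{\alpha-1}=O_{x,\alpha}(|y-x|)$ uniformly for $\alpha$ in compact subsets of $\{\Re\alpha>0\}$, so the integrand is bounded by $C|y-x|^{1-2\Re s}$; the tail integral $\int_{x-\eta}^x|y-x|^{1-2\Re s}\,dy=\eta^{2-2\Re s}/(2-2\Re s)$ tends to $0$ uniformly on any compact subset of $\{\Re s<1\}$. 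Near $y=0$, since $x$ is bounded away from $0$ the factor $|x-y|^{-2s}$ is bounded, and $\int_0^\eta |y^{\alpha-1}-x^{\alpha-1}|\,dy$ is dominated by $\eta^{\Re\alpha}/\Re\alpha + |x|^{\Re\alpha-1}\eta$, which goes to $0$ uniformly for $\alpha$ in compact subsets of $\{\Re\alpha>0\}$. Near $y=1$ there is no singularity at all. Lemma~\ref{lemma0_analytic} then gives separate analyticity of $I_1$ in $s$ and in $\alpha$.

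Combining this with the explicit analyticity of $x^{\alpha-1}J(s)$ proves parts (\textit{i}) and (\textit{ii}); the joint analyticity assertion on $D\subset\mathbb{C}^2$ then follows from Hartogs' theorem on separate analyticity. The main obstacle is precisely the step of neutralizing the interior P.V.\ singularity at $y=x$: Lemma~\ref{lemma0_analytic} is formulated only for uniform integrability at the two endpoints of the domain of integration, so the subtraction trick that produces the factor $(y^{\alpha-1}-x^{\alpha-1})$ (thereby raising the local exponent from $-2\Re s$ to $1-2\Re s$ and making Lemma~\ref{lemma0_analytic} applicable on the split intervals) is essential; without it the hypotheses of the preceding lemma cannot be verified uniformly on compact subsets of the full half-plane $\{\Re s<1\}$.
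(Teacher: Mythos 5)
Your proposal is correct, and it regularizes the principal-value singularity by a different device than the paper does. The paper splits at $y=x/2$ and integrates by parts in the piece containing $y=x$, trading $y^{\alpha-1}$ for $y^{\alpha-2}$ and producing $(1-2s)\,g_2$ in absolutely convergent form; you instead subtract the constant $x^{\alpha-1}$ under the integral sign, which produces an absolutely convergent integral $I_1$ plus the elementary explicit term $x^{\alpha-1}J(s)$ with $J(s)=\bigl(x^{1-2s}-(1-x)^{1-2s}\bigr)/(1-2s)$. Both routes then feed the regularized piece into Lemma~\ref{lemma0_analytic} on subintervals, and both encounter an apparent pole at $s=1/2$. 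Where the paper removes that pole via a separate limiting argument (writing $g$ as $\frac{1}{1-2s}\partial_x\!\int_0^1(|x-y|^{1-2s}-1)y^{\alpha-1}\,dy$ and invoking Riemann's removable-singularity theorem), your pole sits entirely in the closed-form $J(s)$, where its removability is immediate since the numerator vanishes at $s=1/2$; this makes your treatment of the $s=1/2$ case somewhat cleaner and more self-contained. You also make the final passage from separate analyticity to joint analyticity on $D$ explicit via Hartogs' theorem, which the paper leaves implicit. One small cosmetic remark: in the phrase ``pointwise continuity in $y$ \ldots are clear,'' it is worth noting that the integrand of $I_1$ is continuous on each of the \emph{open} subintervals $(0,x)$ and $(x,1)$---which is all Lemma~\ref{lemma0_analytic} requires---but it need not extend continuously to $y=x$ when $\Re s\ge 1/2$; the endpoint behavior at $y=x$ is covered by your uniform-integrability estimate, not by continuity.
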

\begin{proof} 
  We express the integral that defines $N_\alpha^s$ as the sum $g_1(s,\alpha) +
  g_2(s,\alpha)$ of two integrals, each one of which contains only one of the
  two singular points of the integrand ($y=0$ and $y=x$):
\begin{equation*}\label{ns_two}
  g_1 =  \int_{0}^{x/2}
  \sgn(x-y)|x-y|^{-2s} y^{\alpha-1} dy \, \mbox{ and } g_2 = P.V. \int_{x/2}^1
  \sgn(x-y)|x-y|^{-2s} y^{\alpha-1} dy. 
\end{equation*}
Lemma~\ref{lemma0_analytic} tells us that $g_1$ is an analytic
function of $s$ and $\alpha$ for $(s,\alpha)\in D_1 = \mathbb{C}\times
\{\Re \alpha > 0\}$.  

Integration by parts in the $g_2$ term, in turn, yields
\begin{equation}
\label{g2_parts}
(1-2s) g_2(s,\alpha) = (1-x)^{1-2s} -  \left( \frac{x}{2}\right) ^{\alpha-2s}-
(\alpha-1) \int_{x/2}^{1} |x-y|^{1-2s} y^{\alpha-2} dy .
\end{equation}
But, writing the the integral on the right-hand side
of~\eqref{g2_parts} in the form $\int_{x/2}^1 = \int_{x/2}^x +
\int_{x}^1$ and applying Lemma~\ref{lemma0_analytic} to each one of
the resulting integrals shows that the quantity $(1-2s) g_2(s,\alpha)$
is an analytic function of $s$ and $\alpha$ for $(s,\alpha)\in D_2 =
\mathbb{C}\times \{ \alpha > 0\}$. In view of the $(1-2s)$ factor,
however, it still remains to be shown that $g_2(s,\alpha)$ is analytic
at $s=1/2$ as well.

To check that both $g_2(s,\alpha)$ and $g(s,\alpha)$ are analytic
around $s=1/2$ for any fixed $\alpha \in \{ \Re \alpha>0 \}$, we first
note that since $\int_0^1 1\cdot y^{\alpha-1} dy$ is a constant
function of $x$ we may write
$$ g(s,\alpha) = \frac{1}{1-2s} \frac{\partial}{\partial x} \int_0^1 \left(|x-y|^{1-2s} - 1 \right) y^{\alpha-1} dy. $$
But since we have the uniform limit
\[
\lim_{s\to 1/2}\frac{|x-y|^{1-2s} - 1 }{1-2s} =
\left .\frac{\partial}{\partial r} |x-y|^r\right|_{r=0} = \log|x-y|
\]
as complex values of $s$ approach $s=1/2$, we see that $g$ is in fact a
continuous and therefore, by Riemann's theorem on removable
singularities, analytic at $s=1/2$ as well. The proof is now complete.
\end{proof}
%
%
\begin{theorem}\label{teo1} 
  Let $s\in (0,1)$ and $\alpha >0$. Then $N_\alpha^s(x)$ can be
  analytically continued to the unit disc $\{x:|x|<1\}\subset
  \mathbb{C}$ if and only if either $\alpha = s + n$ or $\alpha = 2s +
  n$ for some $n\in \mathbb{N}\cup \{ 0 \}$. In the case $\alpha = s +
  n$, further, we have
  \begin{equation}
  \label{eq:teo1}
  N_{s+n}^s(x) = \sum_{k=0}^{\infty} \frac{(2s)_k}{s-n+k} \frac{x^k}{k!} 
\end{equation}
where, for a given complex number $z$ and a given non-negative integer $k$
\begin{equation}
\label{def_Pochhamer}
(z)_k:=\frac{\Gamma(z+k)}{\Gamma(z)}
  \end{equation}
denotes the Pochhamer symbol.
\end{theorem}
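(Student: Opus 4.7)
The plan is to construct an explicit decomposition
\begin{equation*}
N^s_\alpha(x) \;=\; C(\alpha,s)\,x^{\alpha-2s} \;+\; R(\alpha,s;x),
\end{equation*}
in which $R(\alpha,s;\cdot)$ is analytic on the disc $|x|<1$ and $C(\alpha,s)$ is an explicit scalar. Once this is available the equivalence in the theorem is immediate: $N^s_\alpha(x)$ extends analytically across $x=0$ iff either (i) $\alpha-2s\in\mathbb{N}\cup\{0\}$, so that $x^{\alpha-2s}$ is already a polynomial (case $\alpha=2s+n$), or (ii) the scalar $C(\alpha,s)$ vanishes. In any other situation $x^{\alpha-2s}$ has a genuine branch point at $x=0$ that the analytic piece $R$ cannot erase.

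To construct the decomposition I first assume $0<s<1/2$, so that no principal value is needed, and split
\[
N^s_\alpha(x) \;=\; \int_0^x (x-y)^{-2s}y^{\alpha-1}\,dy \;-\; \int_x^1 (y-x)^{-2s}y^{\alpha-1}\,dy.
\]
The substitution $y=xt$ evaluates the first integral as $x^{\alpha-2s}B(\alpha,1-2s)$, while the substitution $y=1-(1-x)u$ recasts the second as Euler's integral representation
\[
\int_x^1 (y-x)^{-2s}y^{\alpha-1}\,dy \;=\; \frac{(1-x)^{1-2s}}{1-2s}\,{}_2F_1(1-\alpha,1;2-2s;1-x).
\]
I would then apply Kummer's linear connection formula, which passes from argument $1-x$ to $x$, and the elementary identity ${}_2F_1(a,b;a;x)=(1-x)^{-b}$ to isolate the singular power. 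This yields the decomposition with
\[
C(\alpha,s)\;=\;B(\alpha,1-2s)-\frac{\Gamma(1-2s)\Gamma(2s-\alpha)}{\Gamma(1-\alpha)}, \qquad R(\alpha,s;x)\;=\;-\,\frac{(1-x)^{1-2s}}{\alpha-2s}\,{}_2F_1(1-\alpha,1;1-\alpha+2s;x).
\]
The reflection identity $\Gamma(z)\Gamma(1-z)=\pi/\sin(\pi z)$ then rewrites $C(\alpha,s)=0$ as $\sin(\pi(\alpha-s))=0$, i.e.\ $\alpha=s+n$ with $n\in\mathbb{N}\cup\{0\}$ (positivity of $\alpha$ rules out negative $n$), which is exactly condition (ii).

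For the explicit series in the case $\alpha=s+n$, I would return to the integral $\int_x^1$ and expand $(y-x)^{-2s}=y^{-2s}\sum_{k\ge 0}\frac{(2s)_k}{k!}(x/y)^k$, which converges for $y>|x|$; term-by-term integration (justified by dominated convergence for $0<s<1/2$) gives
\[
\int_x^1(y-x)^{-2s}y^{\alpha-1}\,dy\;=\;\sum_{k\ge 0}\frac{(2s)_k\,x^k}{k!(\alpha-2s-k)}\;-\;x^{\alpha-2s}\sum_{k\ge 0}\frac{(2s)_k}{k!(\alpha-2s-k)}.
\]
Using the Beta-integral representation $\frac{1}{k-\beta}=\int_0^1 t^{k-\beta-1}dt$ and summing under the integral, the second series equals $\Gamma(2s-\alpha)\Gamma(1-2s)/\Gamma(1-\alpha)$, so that combined with $x^{\alpha-2s}B(\alpha,1-2s)$ one recovers precisely the coefficient $C(\alpha,s)$. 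Setting $\alpha=s+n$ annihilates that coefficient, and using $\alpha-2s-k=-(s-n+k)$ the remaining power series is exactly $\sum_{k\ge 0}\frac{(2s)_k}{(s-n+k)k!}x^k$, which is \eqref{eq:teo1}.

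The main obstacle is removing the working restriction $0<s<1/2$: the closed-form representation above has removable singularities at $s=1/2$ (where $\Gamma(1-2s)$ has a pole), at $\alpha=2s+n$ (where the hypergeometric parameter $1-\alpha+2s$ becomes a non-positive integer), and the termwise series expansion fails to converge absolutely for $\Re s\ge 1/2$. These are all isolated loci in the $(s,\alpha)$-plane, and Lemma~\ref{lemma_analytic} asserts that $N^s_\alpha(x)$ is jointly analytic in $(s,\alpha)\in\{\Re s<1\}\times\{\Re\alpha>0\}$. The identities, valid on the open dense complement of those loci, therefore extend by analytic continuation to the full parameter range, yielding both directions of the equivalence and the explicit formula \eqref{eq:teo1}.
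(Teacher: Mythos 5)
Your proposal is correct in strategy and, after the dust settles, arrives at \emph{exactly} the same decomposition as the paper: your
\(C(\alpha,s)=B(\alpha,1-2s)-\Gamma(1-2s)\Gamma(2s-\alpha)/\Gamma(1-\alpha)\)
is the bracket in the paper's equation~\eqref{eq:caso_s_unmedio}, and your
\(R(\alpha,s;x)=-\tfrac{(1-x)^{1-2s}}{\alpha-2s}\,{}_2F_1(1-\alpha,1;1-\alpha+2s;x)\)
is a closed-form for the paper's series \(\sum_k \tfrac{(2s)_k}{2s-\alpha+k}\tfrac{x^k}{k!}\), as one checks via the Euler transformation \((1-x)^{c-a-b}{}_2F_1(a,b;c;x)={}_2F_1(c-a,c-b;c;x)\) followed by \((2s-\alpha)_k/(1-\alpha+2s)_k=(2s-\alpha)/(2s-\alpha+k)\). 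The route differs modestly: the paper expands the incomplete Beta function term by term, while you pass through Euler's hypergeometric integral and Kummer's connection formula; the elegant step is recognizing that the second hypergeometric produced by Kummer has equal first and third parameters so that \({}_2F_1(1+\alpha-2s,1-2s;1+\alpha-2s;x)=(1-x)^{2s-1}\) kills the \((1-x)^{1-2s}\) prefactor, leaving a pure power. That gives you a cleaner closed form for the analytic remainder than the paper's series, which is a genuine small gain.

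Two caveats. First, a minor sign slip: in your Beta-integral computation you have
\(\sum_k\tfrac{(2s)_k}{k!(\alpha-2s-k)}=-\tfrac{\Gamma(2s-\alpha)\Gamma(1-2s)}{\Gamma(1-\alpha)}\)
(not \(+\)); the end result for \(C(\alpha,s)\) is right but the narration loses a sign because the second integral is subtracted in the definition of \(N^s_\alpha\). Second, and more substantively, the treatment of the locus \(\alpha=2s+n\) is glossed over. There the coefficient \(C(\alpha,s)\) has a pole (through \(\Gamma(2s-\alpha)\)), the hypergeometric parameter \(1-\alpha+2s\) hits a non-positive integer, and the power \(x^{\alpha-2s}\) becomes the polynomial \(x^n\) \emph{simultaneously}; the individual pieces of your decomposition diverge while the combination stays finite. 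You cannot simply invoke ``analytic continuation from the dense complement'' for the decomposition itself; you have to take the limit of the combined expression and verify the residue cancellation as the paper does in its points (5)--(8). The same care is needed to justify that you've covered the ``only if'' direction in full (i.e.\ non-analyticity whenever \(\alpha-s,\alpha-2s\notin\mathbb{N}\cup\{0\}\)). You flag the issue but do not resolve it; that is the one real gap relative to the paper's proof.
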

\begin{proof}
  We first assume $s<\frac{1}{2}$ (for which the integrand
  in~\eqref{eq:def_Ns} is an element of $L^1(0,1)$) and $\alpha < 2s$
  (to enable some of the following manipulations); the result for the
  full range of $s$ and $\alpha$ will subsequently be established by
  analytic continuation in these variables. Writing
$$ N_\alpha^s(x) = x^{-2s} \int_{0}^{1} \sgn(x-y) \left|1-\frac{y}{x}\right|^{-2s} y^{\alpha-1} dy ,$$
after a change of variables and some simple calculations for $x\in
(0,1)$ we obtain
\begin{equation}
\label{eq:Ns_secondterm}
N_\alpha^s(x) = x^{-2s+\alpha} \left[ \int_{0}^{1} (1-r)^{-2s} r^{\alpha-1} dr - \int_{1}^{\frac{1}{x}} (r-1)^{-2s} r^{\alpha-1} dr \right].
\end{equation}
It then follows that
\begin{equation}
\label{eq:Ns_Betas}
N_\alpha^s(x) = x^{-2s+\alpha} \left[\mbox{B}(\alpha, 1 - 2 s)
  -\mbox{B}(1 - 2 s,2s-\alpha) + \mbox{B}_x(-\alpha + 2 s, 1 - 2 s)
\right],
\end{equation}
where 
\begin{equation}\label{Betas}
 \begin{split}
\mbox{B}(a,b) & := \int_0^1 t^{a-1}(1-t)^{b-1} dt = \frac{\Gamma(a)\Gamma(b)}{\Gamma(a+b)}  \;\;\; \mbox{ and} \\
\mbox{B}_x(a,b) & := \int_0^x t^{a-1}(1-t)^{b-1} dt = x^{a} \sum_{k=0}^{\infty} \frac{(1-b)_k}{a+k} \frac{x^k}{k!}
 \end{split}
\end{equation}
denote the Beta Function~\cite[eqns. 6.2.2]{AbramowitzStegun} and the
Incomplete Beta function~\cite[eqns. 6.6.8 and
15.1.1]{AbramowitzStegun}, respectively.  Indeed, the first integral
in~\eqref{eq:Ns_secondterm} equals the first Beta function on the
right-hand side of~\eqref{eq:Ns_Betas}, and, after the change of
variables $w= 1/r$, the second integral is easily seen to equal the
difference $\mbox{B}(1-2s,2s-\alpha) - \mbox{B}_x(-\alpha + 2 s, 1 - 2
s)$.

In view of~\eqref{eq:Ns_Betas} and the right-hand expressions in
equation~\eqref{Betas} we can now write
\begin{equation}
\label{eq:caso_s_unmedio}
 N_\alpha^s(x) = x^{-2s+\alpha} \left[ \frac{ \Gamma(\alpha) \Gamma(1 - 2 s) }{ \Gamma(1+\alpha-2s) } - \frac{ \Gamma(1 - 2 s) \Gamma(-\alpha + 2 s) }{\Gamma(1-\alpha)} \right] + \sum_{k=0}^{\infty} \frac{(2s)_k}{2s-\alpha+k} \frac{x^k}{k!}
\end{equation}
for all $x\in(0,1)$, $0<s<\frac{1}{2}$ and $0<\alpha < 2s$. Using
Euler's reflection formula $\Gamma(z)\Gamma(1-z) = \pi \csc(\pi z)$
(\cite[eq. 6.1.17]{AbramowitzStegun}), and further trigonometric identities,
equation~\eqref{eq:caso_s_unmedio} can also be made to read
\begin{equation}
\label{eq:Ns_euler}
 N_\alpha^s(x) = x^{-2s+\alpha} \frac{\Gamma(\alpha)\Gamma(1 - 2 s)}{\Gamma(1+\alpha-2s)} \frac{2 \cos(\pi s) \sin(\pi (\alpha-s))}{\sin(\pi(\alpha-2s))}  + \sum_{k=0}^{\infty} \frac{(2s)_k}{2s-\alpha+k} \frac{x^k}{k!} .
\end{equation}

The required $x$-analyticity properties of the function
$N_\alpha^s(x)$ will be established by resorting to analytic
continuation of the function $N_\alpha^s(x)$ to complex values of the
variables $s$ and $\alpha$.  In view of the special role played by the
quantity $q = \alpha - 2 s$ in~\eqref{eq:Ns_euler}, further, it is
useful to consider the function $M_{q}^s(x)= N_{q+2s}^s(x)$ where $q$
is defined via the the change of variables $\alpha = q + 2s$.  Then,
collecting for each $n\in \mathbb{N}\cup \{0\}$ all the potentially
singular terms in a neighborhood of $q=n$ and letting $G(s) :=
2\Gamma(1-2s)\cos(\pi s)$ we obtain
\begin{equation} 
\label{eq:Ns_euler2} \begin{split}
  M_{q}^s(x) & = N_{q+2s}^s(x) = \\
	 & = \left[ x^{q} \frac{\Gamma(q+2s) G(s) \sin(\pi (q+s))}{\Gamma(1+q)\sin(\pi q)}  + \frac{(2s)_n}{n-q} \frac{x^n}{n!} \right]  + \sum_{k=0,\; k\ne n}^{\infty} \frac{(2s)_k}{k-q} \frac{x^k}{k!}.
	\end{split}
\end{equation}

In order to obtain expressions for $N_\alpha^s(x)$ which manifestly
display its analytic character with respect to $x$ for all required
values of $s$ and $\alpha$, we analytically continue the function
$M_{q}^s$ to all complex values of $q$ and $s$ for which the
corresponding $(s,\alpha)$ point belongs to the domain $D=\{(s,\alpha)
: \Re s < 1\} \times \{ \Re \alpha>0 \} \subset \mathbb{C}^2$. To do
this we consider the following facts:
\begin{enumerate}
\item \label{dos} Since $\Gamma(z)$ is a never-vanishing function of
  $z$ whose only singularities are simple poles at the nonpositive
  integers $z=-n$ ($n\in \mathbb{N}\cup \{0\}$), and since, as a
  consequence, $1/\Gamma(z)$ is an entire function of $z$ which only
  vanishes at non-positive integer values of $z$, the quotient
  $\Gamma(\alpha) / \Gamma(1+\alpha-2s)$ is analytic and non-zero for
  $(s,\alpha)\in D$.
\item \label{tres} The function $G(s)$ that appears on the right hand 
  side of~\eqref{eq:Ns_euler2} ($s\ne 1/2$) can be continued analytically to the domain $\Re s < 1$ with
  the value $G(1/2)=\pi$. Further, this function does not vanish for
  any $s$ with $0 < \Re s < 1$.
\item \label{cuatro} For fixed $s\in \mathbb{C}$ the quotient
  $\sin(\pi (\alpha-s)) / \sin(\pi (\alpha-2s)) = \sin(\pi (q+s)) /
  \sin(\pi q)$ is a meromorphic function of $q$---whose singularities
  are simple poles at the integer values $q = n\in \mathbb{Z}$ with
  corresponding residues given by $(-1)^n \sin(\pi
  (q+s))/\pi$. Further, for $s\not\in\mathbb{Z}$ the quotient vanishes
  if and only if $q=n-s$ (or equivalently, $\alpha = s + n$) for some $n\in
  \mathbb{Z}$.
\item \label{cinco} For each $x$ in the unit disc $\{x\in\mathbb{C}:
  |x|<1\}$ the infinite series on the right-hand side
  of~\eqref{eq:Ns_euler} converges uniformly over compact subsets of $D
  \setminus \{ \alpha=2s+n, n\in \mathbb{N}\cup \{0\} \}$. This is
  easily checked by using the asymptotic
  relation~\cite[6.1.46]{AbramowitzStegun} $\lim_{k\to \infty}
  k^{1-2s}(2s)_k / k! = 1/\Gamma(2s)$, and taking into account that
  the functions $s\to (2s)_k$ and $s\to 1/\Gamma(2s)$ are entire and,
  thus, finite-valued for each $s\in \mathbb{C}$ and each $k\in
  \mathbb{N}\cup \{0\}$.
\item \label{seis} For each fixed $s\in \mathbb{C}$ and each
  $x\in\mathbb{C}$ with $|x|<1$ the series on the right hand side
  of~\eqref{eq:Ns_euler} is a meromorphic function of $q$ containing
  only simple polar singularities at $q = n\in \mathbb{N}\cup \{0\}$,
  with corresponding residues given by $(2s)_n x^n/n!$. Indeed,
  point~\eqref{cinco} above tells us that the series is an analytic
  function of $q$ for $q \not\in \mathbb{N}\cup \{0\}$; the residue at
  the non-negative integer values of $q$ can be computed immediately
  by considering a single term of the series.
\item \label{siete} The residue of the two terms under brackets on the
  right-hand side of~\eqref{eq:Ns_euler2} are negatives of each
  other. This can be established easily by considering points
  \eqref{cuatro} and~\eqref{seis} as well as the identity $\lim_{q\to
    n} (-1)^n G(s)\sin(\pi(q+s))/\pi = 1/\Gamma(2s)$---which itself
  results from Euler's reflection formula and standard trigonometric
  identities.
\item \label{ocho} The sum of the bracketed terms
  in~\eqref{eq:Ns_euler2} is an analytic function of $q$ up to and
  including non-negative integer values of this variable, as it
  follows from point~\eqref{siete}. Its limit as $q\to n$, further, is
  easily seen to equal the product of an analytic function of $q$ and
  $s$ times the monomial $x^n$.
\end{enumerate}

Expressions establishing the $x$-analyticity properties of
$N_\alpha^s(x)$ can now be obtained. On one hand, by
Lemma~\ref{lemma_analytic} the function $N_\alpha^s(x)$ is a jointly
analytic function of $(s,\alpha)$ in the domain $D$. In view of
points~\eqref{cuatro} through ~\eqref{ocho}, on the other hand, we see
that the right-hand side expression in equation~\eqref{eq:Ns_euler} is
also an analytic function throughout $D$. Since, as shown above in
this proof, these two functions coincide in the open set $U :=
(0,\frac{1}{2}) \times ( 0, 2s )\subset D$, it follows that they must
coincide throughout $D$. In other words, interpreting the right-hand
sides in equations~\eqref{eq:Ns_euler} and~\eqref{eq:Ns_euler2} as
their analytic continuation at all removable-singularity points
(cf. points~\eqref{tres} and~\eqref{siete}) these two equations hold
throughout $D$.

We may now establish the $x$-analyticity of the function
$N_\alpha^s(x)$ for given $\alpha$ and $s$ in $D$.  We first do this
in the case $\alpha = s+n$ with $n\in \mathbb{N}\cup \{0\}$ and
$s\in(0,1)$. Under these conditions the complete first term
in~\eqref{eq:Ns_euler} vanishes---even at $s=1/2$---as it follows from
points~\eqref{dos} through~\eqref{cuatro}. The function
$N_\alpha^s(x)$ then equals the series on the right-hand side
of~\eqref{eq:Ns_euler}. In view of point~\eqref{cinco} we thus see
that, at least in the case $\alpha = s+n$, $N_\alpha^s(x)$ is analytic
with respect to $x$ for $|x|<1$ and, further, that the desired
relation~\eqref{eq:teo1} holds.

In order to establish the $x$-analyticity of $N_\alpha^s(x)$ in the
case $\alpha = 2s+n$ (or, equivalently, $q=n$) with $n\in
\mathbb{N}\cup \{0\}$ and $s\in (0,1)$, in turn, we consider the limit
$q\to n$ of the right-hand side in
equation~\eqref{eq:Ns_euler2}. Evaluating this limit by means of
points~\eqref{cinco} and~\eqref{ocho} results in an expression which,
in view of point~\eqref{cinco}, exhibits the $x$-analyticity of the
function $N_\alpha^s$ for $|x|<1$ in the case under consideration.

To complete our description of the analytic character of
$N_\alpha^s(x)$ for $(\alpha,s)\in D$ it remains to show that this
function is not $x$-analytic near zero whenever
$(\alpha - s)$ and $(\alpha - 2s)$ are not elements of $\mathbb{N}\cup
\{0\}$. But this follows directly by consideration
of~\eqref{eq:Ns_euler}---since, per points~\eqref{dos}, ~\eqref{tres}
and ~\eqref{cuatro}, for such values of $\alpha$ and $s$ the
coefficient multiplying the non-analytic term $x^{-2s+\alpha}$
in~\eqref{eq:Ns_euler} does not vanish. The proof is now complete.
\end{proof}

\subsection{Singularities on both edges\label{two_edge_sing}}
Utilizing Theorem~\ref{teo1}, which in particular establishes that the
image of the function $u_\alpha(y) = y^\alpha$
(equation~\eqref{u_alpha}) under the operator $T_s$ is analytic for
$\alpha = s+n$, here we consider the image of the function
\begin{equation}\label{udef}
 u(y) := y^s(1-y)^s y^n 
\end{equation}
under the operator $T_s$ and we show that, in fact, $T_s[u]$ is a
polynomial of degree $n$. This is a desirable result which, as we
shall see, leads in particular to (i)~Diagonalization of weighted
version of the fractional Laplacian operator, as well as (ii)~Smoothness
and even analyticity (up to a singular multiplicative weight) of
solutions of equation~\eqref{eq:fraccionario_dirichlet} under suitable
hypothesis on the right-hand side $f$.

\begin{remark}\label{remark_idea_2s}
  Theorem~\ref{teo1} states that the image of the aforementioned
  function $u_\alpha$ under the operator $T_s$ is analytic not only
  for $\alpha = s+n$ but also for $\alpha = 2s+n$. But, as shown in
  Remark~\ref{remark_2s}, the smoothness and analyticity theory
  mentioned in point~(ii) above, which applies in the case $\alpha =
  s+n$, cannot be duplicated in the case $\alpha = 2s+n$. Thus, except
  in Remark~\ref{remark_2s}, the case $\alpha = 2s+n$ will not be further
  considered in this paper.
\end{remark}

In view of Remark~\ref{Ts_PV} and in order to obtain an explicit
expression for $T_s[u]$ we first express the derivative of $u$ in the
form
$$u'(y) = \frac{d}{dy} \left(y^{s} (1-y)^s y^n \right) = y^{s-1}
(1-y)^{s-1} \left[ y^n (s+n-(2s+n)y) \right] $$
and (using~\eqref{eq:c_s}) we thus obtain
\begin{equation}
\label{eq:Ts_useful}
 T_s[ u ] = (1-2s)C_s \left( (s+n) L^s_n - (2s+n) L^s_{n+1} \right) .
\end{equation}
where
\begin{equation}
\label{eq:Ks_n}
L^s_n := P.V. \int_{0}^{1} \sgn(x-y)|x-y|^{-2s} y^{s-1} (1-y)^{s-1} y^n dy
\end{equation}
On the other hand, in view of definitions~\eqref{Tsdef_eq1}
and~\eqref{Tsdef_eq2} and Lemma~\ref{lemma_exchangePV} it is easy to
check that
\begin{equation}
\label{eq:Ss_useful}
\frac{\partial}{\partial x} S_{s}( y^{s-1} (1-y)^{s-1} y^n ) = (1-2s)C_s L^s_n .
\end{equation}
In order to characterize the image $T_s[u]$ of the function $u$
in~\eqref{udef} under the operator $T_s$, Lemma~\ref{lemma_Lns} below
presents an explicit expression for the closely related function
$L^s_n$. In particular the lemma shows that $L^s_n$ is a polynomial of
degree $n-1$, which implies that $T_s[u]$ is a polynomial of degree
$n$.
\begin{lemma}
\label{lemma_Lns}
 $L^s_n(x)$ is a polynomial of degree $n-1$. More precisely,
 \begin{equation}
\label{eq:lemma_Lns}
L^s_n(x) =  \Gamma(s) \sum_{k=0}^{n-1} \frac{(2s)_k}{k!} \frac{\Gamma( n - k - s+1)} { (s+ k - n ) \Gamma(n - k)} x^k.
\end{equation}
\end{lemma}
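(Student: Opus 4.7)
The plan is to evaluate $L^s_n(x)$ in closed form by splitting the integral at $y=x$, recognizing each half as an Euler integral representation of a Gauss hypergeometric function, and then applying hypergeometric connection formulae to cancel the non-polynomial factors and identify a terminating series.

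First, I would restrict to $0 < s < 1/2$, where the integrand of $L^s_n$ is in $L^1(0,1)$ for $x\in(0,1)$ and the P.V. prescription may be dropped. The general case $s\in(0,1)$ will then follow by analytic continuation in $s$, in the spirit of Lemma~\ref{lemma_analytic} and Theorem~\ref{teo1}: both $L^s_n(x)$ and the right-hand side of~\eqref{eq:lemma_Lns} are analytic in $s\in\{\Re s < 1\}$, with the apparent singularity at $s=1/2$ removable.

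Next, I would split $L^s_n = I_1 - I_2$ at $y=x$, and apply the substitutions $y = xt$ in $I_1$ and $y = 1 - (1-x)t$ in $I_2$. The integrals that result match Euler's integral representation of ${}_2F_1$, yielding
\begin{align*}
I_1 &= x^{n-s}\,\tfrac{\Gamma(s+n)\Gamma(1-2s)}{\Gamma(1-s+n)}\,{}_2F_1(1-s,\,s+n;\,1-s+n;\,x),\\
I_2 &= (1-x)^{-s}\,\tfrac{\Gamma(s)\Gamma(1-2s)}{\Gamma(1-s)}\,{}_2F_1(1-s-n,\,s;\,1-s;\,1-x).
\end{align*}
The parameter ranges $\Re c > \Re b > 0$ in Euler's formula are satisfied under the assumption $0<s<1/2$.

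I would then apply Euler's transformation ${}_2F_1(a,b;c;z) = (1-z)^{c-a-b}{}_2F_1(c-a,c-b;c;z)$ to the hypergeometric in $I_1$, and the Kummer connection formula linking Frobenius bases at $z=0$ and $z=1$ to the hypergeometric in $I_2$. Combined with the identity $\Gamma(s+n)/\Gamma(1-s+n) = \Gamma(s-n)/\Gamma(1-s-n)$ (a consequence of $\Gamma(z)\Gamma(1-z) = \pi/\sin(\pi z)$ applied twice), the two $x^{n-s}(1-x)^{-s}$ contributions cancel exactly. After a second application of Euler's transformation to the residual hypergeometric, one obtains
\[
L^s_n(x) \;=\; -\,\tfrac{\Gamma(s)\Gamma(n-s)}{\Gamma(n)}\,{}_2F_1(2s,\,1-n;\,1-n+s;\,x).
\]
Since the second parameter $1-n$ is a non-positive integer for $n\ge 1$, the series terminates at $k = n-1$, proving that $L^s_n$ is a polynomial of degree $n-1$; for $n=0$ the prefactor vanishes (as $1/\Gamma(0)=0$), consistent with the empty sum in~\eqref{eq:lemma_Lns}. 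Finally, expanding the terminating ${}_2F_1$ as $\sum_{k=0}^{n-1}\tfrac{(2s)_k(1-n)_k}{(1-n+s)_k\,k!}x^k$ and simplifying the coefficients using $(1-n)_k = (-1)^k\Gamma(n)/\Gamma(n-k)$, $(1-n+s)_k = \Gamma(1-n+s+k)/\Gamma(1-n+s)$, and the reflection identity $\Gamma(n-k-s)\Gamma(1-n+k+s) = \pi/\sin(\pi(n-k-s))$ recovers the closed form~\eqref{eq:lemma_Lns}.

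The delicate step is the cancellation of the singular $x^{n-s}(1-x)^{-s}$ factors: one has to select the right version of the Kummer connection formula and verify the gamma-function identity that makes the coefficients equal and opposite. Once this cancellation is in place, everything else reduces to routine manipulation with Pochhammer symbols and the reflection formula.
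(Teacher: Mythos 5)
Your proposal is correct and takes a genuinely different route from the paper. The paper substitutes the Taylor expansion $(1-y)^{s-1}=\sum_j \frac{(1-s)_j}{j!}y^j$ into the integrand, reduces $L^s_n$ to a series $\sum_j q_j N^s_{s+n+j}$ of the single-edge functions analyzed in Theorem~\ref{teo1}, interchanges the order of double summation, and evaluates the inner series via Gauss's ${}_2F_1(a,b;c;1)$ summation formula; the termination at $k=n-1$ then comes out of the $1/\Gamma(n-k)$ factor. You instead split the integral at $y=x$, identify each half with an Euler integral representation of ${}_2F_1$ (which is legitimate after restricting to $0<s<1/2$ so the P.V.\ is an ordinary integral, with analytic continuation in $s$ to cover the full range $s\in(0,1)$, the apparent $\Gamma(1-2s)$ pole at $s=1/2$ being removable after cancellation), and then apply the Kummer connection formula and Euler's transformation so that the non-polynomial contributions $x^{n-s}(1-x)^{-s}\,{}_2F_1(n,1-2s;n-s+1;x)$ appearing in $I_1$ and $I_2$ cancel; termination then comes from the negative-integer parameter $1-n$ in the residual ${}_2F_1$. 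I checked the crucial cancellation: the coefficients of the singular terms in $I_1$ and $I_2$ are $\Gamma(1-2s)\,\Gamma(n+s)/\Gamma(1-s+n)$ and $\Gamma(1-2s)\,\Gamma(s-n)/\Gamma(1-s-n)$, and the reflection-formula identity $\Gamma(n+s)/\Gamma(1-s+n)=\Gamma(s-n)/\Gamma(1-s-n)$ you cite does indeed make them equal, so the cancellation goes through; the resulting terminating ${}_2F_1(2s,1-n;1-n+s;x)$ reduces to the stated polynomial after the Pochhammer/reflection manipulations you indicate. What each approach buys: the paper's is more self-contained, recycling the machinery of Theorem~\ref{teo1} and needing only the elementary Gauss sum and justification of two exchanges of limits (Appendices~\ref{appendix_pvseries}--\ref{appendix_sumorder}); yours is shorter and more structural, bypassing the double series and exhibiting $L^s_n$ directly as a terminating hypergeometric, at the cost of invoking the full Kummer connection formula and a preliminary restriction to $s<1/2$ followed by analytic continuation. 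Both are valid.
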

\begin{proof}
  We proceed by substituting $(1-y)^{s-1}$ in the
  integrand~\eqref{eq:Ks_n} by its Taylor expansion around $y=0$,
\begin{equation}\label{taylor_one_bnd}
  (1-y)^{s-1} = \sum_{j=0}^{\infty} q_j y^j, \mbox{ with } q_j = \frac{(1-s)_j}{j!}, 
\end{equation}
and subsequently exchanging the principal value integration with the
infinite sum (a step that is justified in
Appendix~\ref{appendix_pvseries}). The result is
\begin{equation}
\label{eq:series_Ks}
L^s_n(x) = \sum_{j=0}^{\infty}  \left( \mbox{P.V.} \int_0^1 \sgn(x-y)|x-y|^{-2s} q_j y^{s-1+n+j}  dy \right)
\end{equation}
or, in terms of the functions $N_\alpha^s$ defined in
equation~\eqref{eq:def_Ns},
\begin{equation}\label{lns2}
L^s_n(x) = \sum_{j=0}^{\infty} q_j N^s_{{s+n+j}} .
\end{equation}
%

In view of~\eqref{eq:teo1}, equation~\eqref{lns2} can also be made to
read
\begin{equation}
 \label{eq:series_ajk}
 L^s_n(x) = \sum_{j=0}^{\infty} \sum_{k=0}^{\infty} \frac{(1-s)_j}{j!} \frac{(2s)_k}{k!} \frac{1}{s-n-j+k} \, x^k,
\end{equation}
or, interchanging of the order of summation in this expression (which
is justified in Appendix~\ref{appendix_sumorder}),
\begin{equation}
\label{eq:Kz_series}
 L^s_n(x) =  \sum_{k=0}^{\infty} \frac{(2s)_k}{k!} a_{k}^n x^k, \mbox{ where }  a_{k}^n = \sum_{j=0}^{\infty} \frac{(1-s)_j}{j!} \frac{1}{s-n-j+k}.
\end{equation}
The proof will be completed by evaluating explicitly the coefficients
$a_k^n$ for all pairs of integers $k$ and $n$.

In order to evaluate $a_k^n$ we consider the Hypergeometric function
\begin{equation}\label{hyper_geom}
  _2F_1(a,b;c;z)=\sum_{j=0}^\infty \frac{(a)_j
    (b)_j}{(c)_j} \frac{z^j}{j!}.
\end{equation}
Comparing the $a_k^n$ expression in~\eqref{eq:Kz_series}
to~\eqref{hyper_geom} and taking into account the relation
$$\frac{1}{s-n-j+k} = \frac{(n-k-s)_j}{(n-k-s+1)_j} \, \frac{1}{s+k-n}$$
(which follows easily from the recursion $(z+1)_j= (z)_j (z+j)/z$ for
the Pochhamer symbol defined in equation~\eqref{def_Pochhamer}),
we see that $a_k^n$ can be expressed in terms of the Hypergeometric
function $_2F_1$ evaluated at $z=1$:
$$a_k^n = 2F_1(1-s,n-k-s;n-k-s+1;1)/(s+k-n). $$ 
This expression can be simplified further: in view of Gauss' formula
$_2F_1(a,b;c;1) =
\frac{\Gamma(c)\Gamma(c-a-b)}{\Gamma(c-a)\Gamma(c-b)}$ (see
e.g.~\cite[p. 2]{Bailey}) we obtain the concise expression
\begin{equation}
\label{eq:ak_finitos}
 a_{k}^n = \frac{\Gamma(n-k-s+1)\Gamma(s)} { (s+k-n) \Gamma(n - k) } .
\end{equation}
It then clearly follows that $a_{k}^n = 0$ for $k \ge n$---since the
term $\Gamma(n - k)$ in the denominator of this expression is infinite
for all integers $k\geq n$. The series in~\eqref{eq:Kz_series} is
therefore a finite sum up to $k=n-1$ which, in view
of~\eqref{eq:ak_finitos}, coincides with the desired
expression~\eqref{eq:lemma_Lns}. The proof is now complete.
\end{proof}
\begin{corollary}\label{cor_poly_w}
  Let $w(y) = u(y)\chi_{(0,1)}(y)$ where $u= y^s(1-y)^s y^n$
  (equation~\eqref{udef}) and where $\chi_{(0,1)}$ denotes the
  characteristic function of the interval $(0,1)$. Then, defining the
  $n$-th degree polynomial $p(x) = (1-2s)C_s \left( (s+n) L^s_n -
    (2s+n) L^s_{n+1} \right)$ with $L^s_n$ given
  by~\eqref{eq:lemma_Lns}, for all $x\in \mathbb{R}$ such that $x\ne
  0$ and $x\ne 1$ (cf. Remark~\ref{bound_rem}) we have
  \begin{equation}\label{Ts_pol}
T_s[u] (x) = p(x)
\end{equation}
and, consequently,
\begin{equation}\label{frac_pol}
(-\Delta)^s w(x) = p(x).
\end{equation}
\end{corollary}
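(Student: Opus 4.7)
The plan is to obtain \eqref{Ts_pol} by direct substitution of the explicit formula furnished by Lemma \ref{lemma_Lns} into the already-established identity \eqref{eq:Ts_useful}, and then to derive \eqref{frac_pol} by applying Lemma \ref{lemma_hypersingular} to $w$ on $(0,1)$ and observing that this reduces to $T_s[u]$ on that interval.

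For the first identity, equation \eqref{eq:Ts_useful} already states that $T_s[u](x) = (1-2s)C_s\left((s+n)L_n^s(x) - (2s+n)L_{n+1}^s(x)\right)$, which is exactly $p(x)$ by definition. Lemma \ref{lemma_Lns} gives $L_n^s$ as a polynomial of degree $n-1$ and $L_{n+1}^s$ as a polynomial of degree $n$, so $p$ is itself a polynomial, and in fact of degree exactly $n$ (with the leading term coming entirely from $L_{n+1}^s$). This immediately establishes \eqref{Ts_pol}.

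For the second identity, I invoke Lemma \ref{lemma_hypersingular} on $w$ with $(a,b)=(0,1)$. This requires verifying two hypotheses: (i)~$w \in C_0^2(0,1)$ and (ii)~$|w'|$ is integrable on $(0,1)$. For (i), since $s>0$, the product $y^s(1-y)^s y^n$ extends continuously to $\mathbb{R}$ by zero at the endpoints, and it is $C^\infty$ on the open interval $(0,1)$; hence $w \in C^2((0,1)) \cap C(\mathbb{R})$ and vanishes outside $(0,1)$, as required. For (ii), $w'(y)$ has at worst $y^{s-1}$ and $(1-y)^{s-1}$ singularities near the endpoints, both integrable since $s>0$. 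Once Lemma \ref{lemma_hypersingular} applies, it yields $(-\Delta)^s w(x) = C_s \frac{d}{dx} \int_0^1 |x-y|^{1-2s} w'(y)\,dy$ for $x \notin \{0,1\}$; by Remark \ref{const_term} the right-hand side equals $T_s[w](x)$, and since $w$ coincides with $u$ on $(0,1)$ (the sole domain contributing to $T_s$), we conclude $T_s[w](x) = T_s[u](x) = p(x)$.

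The main obstacle, such as it is, lies in verifying that $w$ satisfies the hypotheses of Lemma \ref{lemma_hypersingular} despite the fact that its derivatives blow up at $y=0$ and $y=1$. This is resolved by noting that the definition of $C_0^2(0,1)$ requires only classical $C^2$-smoothness on the \emph{open} interval together with continuity on $\mathbb{R}$, and that the endpoint singularities of $w'$ are integrable because $s>0$. Beyond this point the proof is essentially bookkeeping, combining \eqref{eq:Ts_useful}, Lemma \ref{lemma_Lns}, and the identification $(-\Delta)^s w = T_s[w]$.
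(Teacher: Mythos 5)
Your proof is correct and follows essentially the same route as the paper: equation~\eqref{eq:Ts_useful} plus Lemma~\ref{lemma_Lns} give~\eqref{Ts_pol}, and then~\eqref{frac_pol} follows by identifying $(-\Delta)^s w$ with $T_s[u]$ via Lemma~\ref{lemma_hypersingular} (which is exactly the content of Remark~\ref{remark_Ts}, cited by the paper). You spell out the verification that $w\in C_0^2(0,1)$ with $|w'|$ integrable somewhat more explicitly than the paper does, but the argument is the same.
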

\begin{proof}
  In view of equation~\eqref{eq:Ts_useful} and Lemma~\ref{lemma_Lns}
  we obtain~\eqref{Ts_pol}. The relation~\eqref{frac_pol} then follows
  from Remark~\ref{remark_Ts}.
\end{proof}

In view of equation~\ref{eq:Ss_useful} and Lemma~\ref{lemma_Lns}, the
results obtained for the image of $u(y) = y^{s}(1-y)^{s} y^n$ under
the operator $T_s$ can be easily adapted to obtain analogous
polynomial expressions of degree exactly $n$ for the image of the
function $\tilde{u}(y) = y^{s-1}(1-y)^{s-1} y^n$ under the operator
$S_{s}$.  And, indeed, both of these results can be expressed in terms
of isomorphisms in the space $\mathbb{P}_n$ of polynomials of degree
less or equal than $n$, as indicated in the following corollary,
\begin{corollary}
\label{coro_diagonal}
Let $s\in (0,1)$, $m\in \mathbb{N}$, and consider the linear mappings
$P:\mathbb{P}_m \to \mathbb{P}_m$ and $Q:\mathbb{P}_m \to
\mathbb{P}_m$ defined by
\begin{equation}\label{eq_PQ}
 \begin{split}
   P : p & \to T_s[y^s(1-y)^s p(y)] \;\;\; \mbox{and} \\
   Q : p & \to S_{s}[y^{s-1}(1-y)^{s-1} p(y)].
 \end{split}
\end{equation}
Then the matrices $[P]$ and $[Q]$ of the linear mappings $P$ and $Q$
in the basis $\{ y^n:n=0,\dots,m\}$ are upper-triangular and their
diagonal entries are given by
\begin{equation*}
 \begin{split}
   P_{nn} = &  \frac{\Gamma(2s+n+1) }{n!}\;\;\; \mbox{and}   \\
   Q_{nn} = & -\frac{\Gamma(2s+n-1) }{n!},
 \end{split}
\end{equation*}
respectively.  In particular, for $s = \frac 12$ we have
\begin{equation}\label{diagonal_entries}
 \begin{split}
  P_{nn} = & \;\; 2 n   \\
  Q_{nn} = & - \frac{2}{n}  \;\;\; \mbox{for } \;\;\; n\ne 0 \;\;\; \mbox{and }\;\;\;  Q_{00} = -2\log(2). \\
 \end{split} 
\end{equation}
\end{corollary}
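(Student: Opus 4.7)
The plan is to combine Lemma~\ref{lemma_Lns} with the identities~\eqref{eq:Ts_useful} and~\eqref{eq:Ss_useful}, which express $P[y^n]$ and $\partial_x Q[y^n]$ as explicit $C_s$-scaled combinations of $L^s_n$ and $L^s_{n+1}$. Since Lemma~\ref{lemma_Lns} exhibits these as polynomials of degrees $n-1$ and $n$ respectively, the identity~\eqref{eq:Ts_useful} shows $P[y^n]$ is a polynomial of degree exactly $n$, and integrating~\eqref{eq:Ss_useful} once in $x$ shows the same for $Q[y^n]$. This already yields the upper-triangular structure of $[P]$ and $[Q]$ in the monomial basis.

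To extract $P_{nn}$, I observe that only the term $-(1-2s)C_s(2s+n)L^s_{n+1}$ contributes to the degree-$n$ coefficient, since $L^s_n$ has degree only $n-1$. Setting $k=n$ in the explicit formula~\eqref{eq:lemma_Lns} for $L^s_{n+1}$ and invoking Euler's reflection formula $\Gamma(s)\Gamma(1-s)=\pi/\sin(\pi s)$ reduces the leading coefficient of $L^s_{n+1}$ to $-\pi(2s)_n/[n!\sin(\pi s)]$. Combining with the identity $(1-2s)C_s=\Gamma(2s)\sin(\pi s)/\pi$, which follows from the definition of $C_s$ together with $(1-2s)\Gamma(2s-1)=-\Gamma(2s)$, produces $P_{nn}=(2s+n)\Gamma(2s+n)/n!=\Gamma(2s+n+1)/n!$. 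An entirely parallel calculation---extracting the leading ($x^{n-1}$) coefficient of $L^s_n$ from~\eqref{eq:lemma_Lns} and dividing by $n$ to undo the $x$-differentiation in~\eqref{eq:Ss_useful}---yields $Q_{nn}=-\Gamma(2s+n-1)/n!$ for $n\geq 1$.

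The $s=\tfrac{1}{2}$ specializations then follow by substitution in the general formula, the only indeterminate-form issue arising for $Q_{00}$ because $\Gamma(2s-1)$ has a pole at $s=\tfrac{1}{2}$. This entry must therefore be handled separately. Since the sum in~\eqref{eq:lemma_Lns} is empty for $n=0$, we have $L^s_0\equiv 0$, so $Q[1]$ is a constant in $x$ which I would evaluate at $x=\tfrac{1}{2}$ using symmetry of the weight. After the substitution $y=(1+t)/2$ this reduces to the classical logarithmic-potential identity $\int_{-1}^1\log|t|/\sqrt{1-t^2}\,dt=-\pi\log 2$, giving $Q_{00}=-2\log 2$. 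The main obstacle is the bookkeeping with $\Gamma$-function identities needed to collapse the leading coefficient of $L^s_n$ into the compact closed form; once that simplification is executed in the correct order, the rest of the argument is essentially mechanical.
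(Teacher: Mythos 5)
Your argument is correct and, for the entries $P_{nn}$ and $Q_{nn}$ with $n\neq 0$, it follows the same path as the paper: combine equation~\eqref{eq:Ts_useful} (resp.~\eqref{eq:Ss_useful}) with the explicit polynomial form of $L^s_n$ from Lemma~\ref{lemma_Lns}, isolate the leading coefficient, and simplify via the reflection formula together with $(1-2s)C_s=\Gamma(2s)\sin(\pi s)/\pi$. The paper leaves these simplifications implicit (``follow directly from\dots''), whereas you spell them out; the bookkeeping you carry through is exactly right, including the factor $1/n$ from undoing the $x$-differentiation in~\eqref{eq:Ss_useful}.

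The one place where you genuinely diverge from the paper is $Q_{00}$. The paper evaluates $S_s\bigl[y^{s-1}(1-y)^{s-1}\bigr]$ for general $s$ as $C_s\bigl(\mathrm{B}(3s-1,s)-\mathrm{B}(s,s)\bigr)$ and then extracts the $s\to\tfrac12$ limit using l'H\^opital together with values of $\Gamma$ and $\Gamma'$ at $z=\tfrac12$ and $z=1$. You instead work directly at $s=\tfrac12$, exploit the symmetry of the weight about $y=\tfrac12$ to evaluate the constant at $x=\tfrac12$, and the change of variables $y=(1+t)/2$ collapses it to $\tfrac1\pi\int_{-1}^{1}(\log|t|-\log 2)(1-t^2)^{-1/2}\,dt=-2\log 2$ via the classical identity $\int_{-1}^1\log|t|\,(1-t^2)^{-1/2}\,dt=-\pi\log 2$. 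Your route is more elementary (it sidesteps the removable-singularity limit of $\Gamma(2s-1)$) and arguably cleaner; the paper's route has the minor advantage of producing the closed form of $Q_{00}$ for all $s$, not just $s=\tfrac12$, but that is not needed for the statement. Both are valid.
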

\begin{proof}
  The expressions for $n\ne 0$ and for $P_{00}$ follow directly from
  equations \eqref{eq:Ts_useful}, \eqref{eq:Ss_useful}
  and~\eqref{eq:lemma_Lns}. In order to obtain $Q_{00}$, in turn, we
  note from~\eqref{eq:Ss_useful} that for $n=0$ we have
  $\frac{\partial}{\partial x} S_{s}( y^{s-1} (1-y)^{s-1} y^n )=0$.
  In particular, $S_{s}( y^{s-1} (1-y)^{s-1} )$ does not depend on $x$
  and we therefore obtain
\begin{equation*}
 \begin{split}
   Q_{00} =  S_{s}( y^{s-1} (1-y)^{s-1} ) &=  C_s \int_0^1 ( y^{2s-1} - 1 ) y^{s-1} (1-y)^{s-1} dy \\
   &= C_s \left( \mbox{B}(3s-1,s) - \mbox{B}(s,s) \right).
 \end{split}
\end{equation*} 
In the limit as $s \to 1/2$, employing l'H\^opital's rule together with
well known values\cite[6.1.8, 6.3.2, 6.3.3]{AbramowitzStegun} for the
Gamma function and it's derivative at $z=1/2$ and $z=1$, we obtain
$S_{\frac 12}( y^{-1/2} (1-y)^{-1/2} ) = -2\log(2)$
\end{proof}

\subsection{Diagonal Form of the Weighted Fractional Laplacian\label{diag}}
In view of the form of the mapping $P$ in equation~\eqref{eq_PQ} and
using the ``weight function''
$$\w^s(y) = (y-a)^s(b-y)^s,$$
for $\phi \in C^2(a,b)\cap C^1[a,b]$ (that is, $\phi$ smooth up to the
boundary but it does not necessarily vanish on the boundary) we
introduce the weighted version
\begin{equation}\label{eq:weighted_hypersingular}
K_s(\phi) = C_s \frac{d}{dx} \int_{a}^{b} |x-y|^{1-2s} \frac{d}{dy} \left( \w^s \phi(y) \right) dy \quad (s\ne 1/2),
\end{equation}
of the operator $T_s$ in equation~\eqref{Tsdef_eq3}. In view of Lemma~\ref{lemma_hypersingular}, $K_s$ can also be viewed as a weighted version of the Fractional Laplacian operator, and we  therefore define
\begin{equation}\label{eq:weighted_fractional}
(-\Delta)_\w^s[ \phi] = K_s(\phi) \;\; \mbox{for} \;\; \phi \in C^2(a,b)\cap C^1[a,b].
\end{equation}

\begin{remark}\label{rem_connection_u}
  Clearly, given a solution $\phi$ of the equation 
  \begin{equation}\label{eqn_weighted}
    (-\Delta)_\w^s[\phi] = f
\end{equation}
in the domain $\Omega = (a,b)$, the function $u= \w^s \phi$ extended
by zero outside $(a,b)$ solves the Dirichlet problem for the
Fractional Laplacian~\eqref{eq:fraccionario_dirichlet}
(cf. Lemma~\ref{lemma_hypersingular}).
\end{remark}

In order to study the spectral properties of the operator
$(-\Delta)^s_\w,$ consider the weighted $L^2$ space
\begin{equation}\label{weighted_L2}
  L^2_s(a,b) = \left\{ \phi:(a,b) \to \R \ \colon \int_a^b |\phi|^2 \w^s < \infty \right\},
\end{equation}
which, together with the inner product
\begin{equation}\label{scalarproduct_L2}
 (\phi,\psi)^s_{a,b} = \int_a^b \phi \, \psi \, \w^s 
\end{equation}
and associated norm is a Hilbert space. We can now establish the
following lemma.
\begin{lemma}
\label{teo:autoadj}
The operator $(-\Delta)_\w^s$  maps $\mathbb{P}_n$ into itself.
The restriction of $(-\Delta)_\w^s$ to $\mathbb{P}_n$ is a self
adjoint operator with respect to the inner product
$(\cdot,\cdot)^s_{a,b}$.
\end{lemma}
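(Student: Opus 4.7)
The plan is to handle the two claims separately. For stability of $\mathbb{P}_n$ under $(-\Delta)_\w^s$, I would reduce to the reference interval $(0,1)$ via the affine change of variables $y = a + (b-a)t$. This transformation maps polynomials of degree $n$ in $y$ to polynomials of degree $n$ in $t$, rescales $\w^s$ by a constant times $t^s(1-t)^s$, and turns $K_s$ into a constant multiple of $T_s$ acting on functions of $t$. Corollary~\ref{coro_diagonal} then states exactly that the map $P: p \mapsto T_s[t^s(1-t)^s p(t)]$ sends $\mathbb{P}_n$ into $\mathbb{P}_n$, so pulling back yields the analogous statement for $(-\Delta)_\w^s$ on $(a,b)$.

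For self-adjointness, the natural route is to rewrite the bilinear form $((-\Delta)_\w^s \phi, \psi)^s_{a,b}$ in a manifestly symmetric way. Set $U = \w^s \phi$ and $V = \w^s \psi$; both vanish at $x=a$ and $x=b$, and their derivatives behave like $(x-a)^{s-1}$, $(b-x)^{s-1}$ near the endpoints, hence are integrable for $s \in (0,1)$. Starting from the definition~\eqref{eq:weighted_hypersingular} and assuming $s\ne 1/2$ (the case $s=1/2$ is identical with $\log$ in place of the power),
\begin{equation*}
((-\Delta)_\w^s \phi,\psi)^s_{a,b}
= C_s \int_a^b V(x)\, \frac{d}{dx}\!\int_a^b |x-y|^{1-2s}\, U'(y)\, dy\, dx.
\end{equation*}
Integrating by parts in $x$ kills the boundary terms because $V(a)=V(b)=0$, and Fubini then gives
\begin{equation*}
((-\Delta)_\w^s \phi,\psi)^s_{a,b}
= -C_s \int_a^b\!\int_a^b |x-y|^{1-2s}\, U'(y)\, V'(x)\, dx\, dy,
\end{equation*}
which is visibly symmetric under $(\phi,\psi) \leftrightarrow (\psi,\phi)$ because the kernel $|x-y|^{1-2s}$ is symmetric. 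Since both $K_s(\phi)\psi\w^s$ and $K_s(\psi)\phi\w^s$ are polynomials times $\w^s$, hence integrable, this establishes $((-\Delta)_\w^s \phi,\psi)^s_{a,b} = (\phi,(-\Delta)_\w^s \psi)^s_{a,b}$.

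An alternative, perhaps more conceptual, route is to recognise that $U$ and $V$ are admissible test functions for the fractional Laplacian on $\mathbb{R}$ (they vanish off $\W$ and, by Corollary~\ref{cor_poly_w}, $(-\Delta)^s U$ is a polynomial in $\W$ while $V$ vanishes at the boundary with weight $\w^s$), and invoke the classical Gagliardo-type quadratic form
\begin{equation*}
\int_{\mathbb{R}} (-\Delta)^s U(x)\, V(x)\, dx
= \tfrac{C_1(s)}{2}\iint_{\mathbb{R}^2} \frac{(U(x)-U(y))(V(x)-V(y))}{|x-y|^{1+2s}}\, dx\, dy,
\end{equation*}
which is manifestly symmetric in $U$ and $V$. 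Symmetry in $U,V$ is exactly symmetry in $\phi,\psi$ with respect to the inner product~\eqref{scalarproduct_L2}.

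The main technical obstacle I anticipate is the rigorous justification of the integration by parts and the interchange of $d/dx$ with the integral in the presence of the singular factor $\w^{s-1}$ at the endpoints, coupled with the hypersingular behaviour of $K_s$. I would handle this by truncating the inner integral to $[a+\eps,b-\eps]$, performing the differentiation and integration by parts on the smooth truncated problem (where Lemma~\ref{lemma_exchangePV} applies), and then passing to the limit $\eps\to 0^+$, using the integrability of $U'$ and $V'$ together with the fact that $|x-y|^{1-2s}$ is locally integrable to control all remainder terms. The endpoint boundary contributions vanish because $V$ vanishes there and $\int_a^b |x-y|^{1-2s}\,U'(y)\,dy$ remains bounded up to $x=a,b$; this is the one place where the $s>0$ hypothesis is used crucially.
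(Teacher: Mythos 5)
Your proof is correct and follows the same route as the paper: integrate by parts in $x$ (killing boundary terms because $V=\w^s\psi$ vanishes at $a,b$) and apply Fubini to arrive at the manifestly symmetric bilinear form $-C_s\iint|x-y|^{1-2s}U'(y)V'(x)\,dy\,dx$; the paper merely performs one further integration by parts in $y$ to land on $(\phi,(-\Delta)^s_\w\psi)^s_{a,b}$ directly rather than stopping at the symmetric midpoint. Your explicit attention to integrability of $U',V'$ near the endpoints and to the truncation argument fills in details the paper leaves implicit, and your alternative via the Gagliardo quadratic form is a valid (and slightly more structural) side route, though not the one the paper takes.
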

\begin{proof}
  Using the notation $K_s=(-\Delta)_\w^s$, we first establish the
  relation $(K_s[p],q) = (p,K_s[q])$ for $p,q \in \mathbb{P}_n$. But
  this follows directly from application of integration by parts and
  Fubini's theorem followed by an additional instance of integration
  by parts in \eqref{eq:weighted_hypersingular}, and noting that the
  the boundary terms vanish by virtue of the weight $\w^s$.
\end{proof}
The orthogonal polynomials with respect to the inner product under
consideration are the well known Gegenbauer
polynomials~\cite{AbramowitzStegun}. These are defined on the interval $(-1,1)$ by
the recurrence
\begin{equation}\label{eq:recurrencia}
\begin{split}
  C_{0}^{(\alpha)}(x) & = 1, \\
  C_{1}^{(\alpha)}(x) & = 2 \alpha x, \\
  C_{n}^{(\alpha)}(x) & = \frac{1}{n}
  \left[2x(n+\alpha-1)C_{n-1}^{(\alpha)}(x) -
    (n+2\alpha-2)C_{n-2}^{(\alpha)}(x) \right];
\end{split}\end{equation}
for an arbitrary interval $(a,b)$, the corresponding orthogonal
polynomials can be easily obtained by means of a suitable affine
change of variables. Using this orthogonal basis we can now produce an
explicit diagonalization of the operator $(-\Delta)^s_\w$.  We first consider the
interval $(0,1)$; the corresponding result for a general interval
$(a,b)$ is presented in Corollary~\ref{diag_ab}.
\begin{theorem} \label{teo:diagonalform} Given $s\in(0,1)$ and $n \in
  \mathbb{N}\cup \{ 0 \} $, consider the Gegenbauer polynomial
  $C^{(s+1/2)}_n$, and let $p_n(x) = C^{(s+1/2)}_n(2x-1)$. Then the
  weighted operator $(-\Delta)^s_\w$ in the interval $(0,1)$ satisfies
  the identity 
\begin{equation} \label{eq_diagform}
(-\Delta)^s_\w( p_n ) = 
\frac{\Gamma(2s+n+1)}{n!} \, p_n .
\end{equation}
\end{theorem}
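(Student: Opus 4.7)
The plan is to combine the three facts already available: (i)~Corollary~\ref{coro_diagonal} shows that the matrix of $(-\Delta)^s_\w$ in the monomial basis $\{1,y,y^2,\dots\}$ of $\mathbb{P}_n$ is upper-triangular with diagonal entries $P_{kk}=\Gamma(2s+k+1)/k!$; (ii)~Lemma~\ref{teo:autoadj} says the operator is self-adjoint with respect to the weighted inner product $(\cdot,\cdot)^s_{0,1}$; and (iii)~the Gegenbauer polynomials $p_n(x)=C_n^{(s+1/2)}(2x-1)$ are, up to normalization, the orthogonal polynomials associated with the weight $\w^s(y)=y^s(1-y)^s$ on $(0,1)$. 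From these ingredients the eigenrelation follows almost automatically.

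First I would note that the diagonal entries $P_{kk}=\Gamma(2s+k+1)/k!$ are all distinct: since $s>0$, the ratio $P_{k+1,k+1}/P_{kk}=(2s+k+1)/(k+1)>1$, so $P_{kk}$ is strictly increasing in $k$. Combined with upper-triangularity, this tells us that $(-\Delta)^s_\w$ restricted to $\mathbb{P}_n$ has exactly $n+1$ distinct eigenvalues, namely $P_{00},P_{11},\dots,P_{nn}$. For each $k\le n$ there is a unique (up to scaling) eigenvector $q_k\in\mathbb{P}_n$ for the eigenvalue $P_{kk}$; and since the triangular structure forces $q_k$ to have degree exactly $k$, the family $\{q_0,q_1,\dots,q_n\}$ forms a basis of $\mathbb{P}_n$ with $\deg q_k=k$.

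Next I would invoke Lemma~\ref{teo:autoadj}: a self-adjoint operator with distinct eigenvalues has pairwise orthogonal eigenvectors. Hence $(q_k)_{k=0}^n$ is an orthogonal sequence with respect to $(\cdot,\cdot)^s_{0,1}$, and $\deg q_k=k$. But the orthogonal polynomials associated with a positive weight on a bounded interval are unique up to nonzero multiplicative constants in each degree. Under the affine substitution $x=2y-1$, the classical orthogonality of $C_n^{(s+1/2)}$ on $(-1,1)$ with weight $(1-x^2)^s$ becomes orthogonality of $p_n(y)=C_n^{(s+1/2)}(2y-1)$ on $(0,1)$ with weight $(4y(1-y))^s=4^s\w^s(y)$, which defines the same inner product up to a positive constant. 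Therefore $q_k$ and $p_k$ must be proportional for every $k\le n$, and in particular $p_n$ is an eigenfunction of $(-\Delta)^s_\w$ with eigenvalue $P_{nn}=\Gamma(2s+n+1)/n!$, giving~\eqref{eq_diagform}.

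The only subtle point — the one I would flag as the main obstacle — is the identification of the eigenfunctions with the Gegenbauer polynomials. It is tempting to try a direct computation of $(-\Delta)^s_\w[p_n]$ using the explicit formula of Lemma~\ref{lemma_Lns}, but that approach quickly becomes unwieldy. The cleaner route sketched above trades the explicit calculation for a uniqueness argument, but uniqueness requires simultaneously (a)~the distinctness of the diagonal entries (which, as noted, is immediate for $s>0$) and (b)~a careful check that the weight in $(\cdot,\cdot)^s_{0,1}$ really matches the Gegenbauer weight after the affine change of variables $x=2y-1$. Once both of these are in hand, the proof reduces to a single line: uniqueness of orthogonal polynomials plus self-adjointness forces $p_n$ to be the eigenfunction corresponding to the $n$th diagonal entry of $[P]$.
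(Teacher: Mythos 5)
Your proof is correct and takes essentially the same route as the paper: self-adjointness from Lemma~\ref{teo:autoadj} gives orthogonal polynomial eigenfunctions, uniqueness of orthogonal polynomials for a fixed weight identifies them with the Gegenbauer polynomials, and the eigenvalues are read off from the diagonal of the upper-triangular matrix $[P]$ of Corollary~\ref{coro_diagonal}. The only difference is that you make explicit two points the paper leaves implicit---that the diagonal entries $P_{kk}$ are pairwise distinct (strictly increasing for $s>0$), and that this together with upper-triangularity forces each eigenvector to have degree exactly $k$---which is a useful clarification rather than a change of strategy.
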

\begin{proof}
  By Lemma~\ref{teo:autoadj} the restriction of the operator
  $(-\Delta)^s_\w$ to the subspace $\mathbb{P}_m$ is self-adjoint and
  thus diagonalizable. We may therefore select polynomials $q_0,
  q_1,\dots,q_m\in\mathbb{P}_m$ (where, for $0\leq n\leq m$, $q_n$ is
  a polynomial eigenfunction of $(-\Delta)^s_\w$ of degree exactly
  $n$) which form an orthogonal basis of the space
  $\mathbb{P}_m$. Clearly, the eigenfunctions $q_n$ are orthogonal and,
  therefore, up to constant factors, the polynomials $q_n$ must
  coincide with $p_n$ for all $n$, $0\leq n\leq m$.  The corresponding
  eigenvalues can be extracted from the diagonal elements, displayed
  in equation~\eqref{diagonal_entries}, of the upper-triangular matrix $[P]$
  considered in Corollary~\ref{coro_diagonal}. These entries coincide
  with the constant term in~\eqref{eq_diagform}, and the proof is
  thus complete.
\end{proof}

\begin{corollary}\label{diag_ab}
  The weighted operator $(-\Delta)^s_\w$ in the interval $(-1,1)$
  satisfies the identity
\[
(-\Delta)^s_\w(C_n^{(s+1/2)}) =   \lambda_n^s \, C_n^{(s+1/2)} ,
\]	
where
\begin{equation}
\label{Eigenvalues}
\lambda_n^s =  
\frac{\Gamma(2s+n+1)}{n!}.
\end{equation}
Moreover in the interval $(a,b)$, we have	
\begin{equation}\label{eigenfuncts} (-\Delta)^s_\w(p_n) =   
\lambda_n^s \, p_n ,
\end{equation}
where $p_n(x) = C_n^{(s+1/2)}\left(\frac{2(x-a)}{b-a} - 1 \right)$.  
\end{corollary}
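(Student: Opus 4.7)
The plan is to derive the corollary directly from Theorem~\ref{teo:diagonalform} by exploiting the affine-covariance of the weighted operator $(-\Delta)^s_\w$. Concretely, I would first verify that if $\phi$ is a function on $(a,b)$ and we set $\tilde\phi(u)=\phi(a+(b-a)u)$ for $u\in(0,1)$, then the operator $(-\Delta)^s_\w$ on $(a,b)$ and the corresponding operator $(-\Delta)^s_{\tilde\w}$ on $(0,1)$ (with $\tilde\w^s(t)=t^s(1-t)^s$) are conjugate via this pull-back, with no residual scaling factor.

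To check that covariance, I would substitute $x=a+(b-a)u$ and $y=a+(b-a)t$ directly into the definition~\eqref{eq:weighted_hypersingular} of $K_s$ and track the powers of $(b-a)$ arising from each ingredient: the weight contributes $\w^s(y)=(b-a)^{2s}\tilde\w^s(t)$, the kernel gives $|x-y|^{1-2s}=(b-a)^{1-2s}|u-t|^{1-2s}$, the outer derivative $d/dx$ and the inner derivative $d/dy$ each produce a factor $(b-a)^{-1}$ via the chain rule, and the change of variable in the integral supplies $dy=(b-a)\,dt$. The exponents sum to $2s+(1-2s)-1-1+1=0$, so all $(b-a)$ factors cancel exactly and one obtains the identity
\[
(-\Delta)^s_\w[\phi](x)=(-\Delta)^s_{\tilde\w}[\tilde\phi](u),\qquad u=\tfrac{x-a}{b-a}.
\]
(Equivalently, one can argue via $(-\Delta)^s(\w^s\phi)(x)=(b-a)^{-2s}(-\Delta)^s(\tilde\w^s\tilde\phi)(u)$ combined with the scaling $\tilde\w^s = (b-a)^{-2s}\w^s$, which is the point already used implicitly in Remark~\ref{rem_connection_u}.)

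With this invariance in hand, the corollary follows in one step. Setting $\tilde p_n(u)=C^{(s+1/2)}_n(2u-1)$, Theorem~\ref{teo:diagonalform} yields $(-\Delta)^s_{\tilde\w}[\tilde p_n]=\lambda_n^s\,\tilde p_n$ with $\lambda_n^s=\Gamma(2s+n+1)/n!$; pulling back to $(a,b)$ through $u=(x-a)/(b-a)$ produces the polynomial $p_n(x)=C^{(s+1/2)}_n\bigl(2(x-a)/(b-a)-1\bigr)$ together with the required eigenvalue relation~\eqref{eigenfuncts}. The interval $(-1,1)$ is the particular case where $2(x-a)/(b-a)-1=x$, so the eigenfunction is simply $C^{(s+1/2)}_n(x)$ and one recovers the first displayed identity.

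The only nontrivial piece of work is the bookkeeping of powers of $(b-a)$ in the first step; once those factors are shown to cancel, everything else is immediate from Theorem~\ref{teo:diagonalform}. There is no additional analytic subtlety, since the operator acts on polynomials and the affine change of variables preserves both $\mathbb{P}_n$ and the relevant inner product (up to an inessential positive constant).
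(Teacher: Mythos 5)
Your proposal is correct and follows essentially the same route as the paper's own proof: the paper also performs the affine change of variables $\tilde x=(x-a)/(b-a)$, $\tilde y=(y-a)/(b-a)$ in equation~\eqref{eq:weighted_hypersingular} and invokes the scaling $\w^s(y)=(b-a)^{2s}\tilde\w^s(\tilde y)$ to map the operator on $(a,b)$ onto the one on $(0,1)$, so that Theorem~\ref{teo:diagonalform} applies directly. Your version merely makes the cancellation of $(b-a)$-powers explicit, which is a welcome but not substantively different elaboration.
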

\begin{proof}
  The formula is obtained by employing the change of variables
  $\tilde x=(x-a)/(b-a) $ and $\tilde y =(y-a)/(b-a)$ in equation~\eqref{eq:weighted_hypersingular}
  to map the weighted operator in $(a,b)$ to the corresponding operator in $(0,1)$, 
	and observing that $\w^s(y) = (b-a)^{2s} \tilde \w^s (\tilde y)$, where 
	$\tilde \w^s (\tilde y) = \tilde{y}^s (1 - \tilde y)^s.$
\end{proof}

\begin{remark}\label{eigenvalue_asymptotics}
  It is useful to note that, in view of the formula $\lim_{n\to\infty}
  n^{\beta-\alpha}\Gamma(n+\alpha)/\Gamma(n+\beta) = 1$ (see
  e.g.~\cite[6.1.46]{AbramowitzStegun}) we have the asymptotic
  relation $\lambda_n^s \approx O\left( n^{2s} \right)$ for the
  eigenvalues~\eqref{Eigenvalues}. This fact will be exploited in the
  following sections in order to obtain sharp Sobolev regularity
  results as well as regularity results in spaces of analytic
  functions.
\end{remark}

As indicated in the following corollary, the background developed in
the present section can additionally be used to obtain the
diagonal form of the operator $S_s$ for all $s\in (0,1)$. This
corollary generalizes a corresponding existing result
for the case $s=1/2$---for which, as indicated in
Remark~\ref{remark_openarcs}, the operator $S_{s}$ coincides with the
single-layer potential for the solution of the two-dimensional Laplace
equation outside a straight arc or ``crack''.
\begin{corollary} The weighted operator $\phi \to
  S_s[\w^{s-1} \phi]$ can be diagonalized in terms of the Gegenbauer
  polynomials $C_n^{(s-1/2)}$
\begin{equation*}
 S_{s} \left[\w^{s-1} C_n^{(s-1/2)} \right] = \mu_n^s C^{(s-1/2)}_n ,
\end{equation*}
where in this case the eigenvalues are given by
$$ \mu_n^s = 
- \frac{\Gamma(2s+n-1) } {n!} .$$
\end{corollary}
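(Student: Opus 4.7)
The plan is to mirror the proof of Theorem~\ref{teo:diagonalform}, swapping the operator $(-\Delta)^s_\omega$ for the weighted operator $Q:\phi \mapsto S_s[\omega^{s-1}\phi]$ and the weight $\omega^s$ for $\omega^{s-1}$. The three ingredients needed are: (i) invariance of polynomials, (ii) self-adjointness in a suitable weighted $L^2$ inner product, and (iii) identification of the orthogonal polynomials.

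First, I would invoke Corollary~\ref{coro_diagonal} directly. It already establishes that $Q$ maps $\mathbb{P}_m$ into itself and that its matrix in the monomial basis $\{1,y,\ldots,y^m\}$ is upper triangular with diagonal entries $Q_{nn} = -\Gamma(2s+n-1)/n!$. Since the eigenvalues of an upper triangular matrix are precisely its diagonal entries, this step fixes the candidate eigenvalues $\mu_n^s$ as in the statement (including the $s=1/2$ case via the limits computed in Corollary~\ref{coro_diagonal}).

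Second, I would introduce the weighted inner product
\[
\langle \phi,\psi\rangle_{s-1} = \int_a^b \phi(x)\psi(x)\,\omega^{s-1}(x)\,dx,
\]
which is finite on $\mathbb{P}_m$ because $s\in(0,1)$ makes $\omega^{s-1}$ integrable at the endpoints. The key observation is that the kernel of $S_s$, namely $C_s(|x-y|^{1-2s}-(b-a)^{1-2s})$ for $s\ne 1/2$ and $\pi^{-1}\log(|x-y|/(b-a))$ for $s=1/2$, is symmetric in $x$ and $y$. Consequently, Fubini's theorem applied to
\[
\langle Q\phi,\psi\rangle_{s-1} = \iint_{(a,b)\times(a,b)} k(x,y)\,\phi(y)\omega^{s-1}(y)\,\psi(x)\omega^{s-1}(x)\,dy\,dx
\]
immediately yields $\langle Q\phi,\psi\rangle_{s-1}=\langle \phi,Q\psi\rangle_{s-1}$ for all $\phi,\psi\in\mathbb{P}_m$.

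Third, since $Q|_{\mathbb{P}_m}$ is self-adjoint, it admits an orthogonal eigenbasis $\{q_0,\ldots,q_m\}$ with $\deg q_n = n$ (the latter follows from the upper triangular structure, exactly as in the proof of Theorem~\ref{teo:diagonalform}). After the affine change of variables mapping $(a,b)$ to $(-1,1)$, the weight $\omega^{s-1}$ becomes a positive multiple of $(1-x^2)^{s-1} = (1-x^2)^{(s-1/2)-1/2}$, whose orthogonal polynomials are, up to scalar multiples, the Gegenbauer polynomials $C_n^{(s-1/2)}$. Thus each $q_n$ coincides with $C_n^{(s-1/2)}$ up to a nonzero constant, and matching with the diagonal of $[Q]$ gives the eigenvalue $\mu_n^s = -\Gamma(2s+n-1)/n!$.

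The main obstacle is of a bookkeeping nature rather than substantive: verifying Fubini's theorem on the weighted integrals near the endpoints (absolute integrability of $(x-a)^{s-1}(b-x)^{s-1}$ against polynomials and the kernel), and handling the $s=1/2$ kernel and the $n=0$ eigenvalue separately as was already done for $Q_{00}$ in Corollary~\ref{coro_diagonal}. No new analytic ingredient is needed beyond what has already been established in Section~\ref{two_edge_sing}.
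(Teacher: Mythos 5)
Your proof follows the same route as the paper's, which simply states that the argument is ``analogous to that of Theorem~\ref{teo:diagonalform}'' and that the eigenvalues are read off from the diagonal of the upper-triangular matrix $[Q]$ of Corollary~\ref{coro_diagonal}. You have correctly spelled out the one adaptation left implicit in the paper: the map $\phi\mapsto S_s[\w^{s-1}\phi]$ is self-adjoint with respect to the $\w^{s-1}$-weighted inner product (not the $\w^s$-weighted one used for $(-\Delta)^s_\w$), which is exactly why its orthogonal eigenbasis consists of the Gegenbauer polynomials $C_n^{(s-1/2)}$.
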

\begin{proof}
The proof for the interval $[0,1]$ is analogous to that of Theorem \ref{teo:diagonalform}. In this case, the eigenvalues are extracted from the diagonal entries of the upper triangular matrix $[Q]$ in equation \eqref{diagonal_entries}. A linear change of variables allows to obtain the desired formula for an arbitrary interval.
\end{proof}

\begin{corollary}
  In the particular case $s=1/2$ on the interval $(-1,1)$, the
  previous results amount, on one hand, to the known
  result~\cite[eq. 9.27]{Handscomb} (cf also~\cite{YanSloan}),
\begin{equation*}
\int_{-1}^1 \log|x-y| T_n(y) (1-y^2)^{-1/2} dy = 
\left\lbrace
  \begin{array}{rl}
        -\frac{\pi}{n} T_n  & \mbox{ for } n \ne 0 \\
        -2\log(2) & \mbox{ for } n = 0 \\
      \end{array}
    \right. 
\end{equation*}
(where $T_n$ denotes the Tchevyshev polynomial of the first kind),
and, on the other hand, to the relation
\begin{equation*}
\frac{\partial}{\partial x} \int_{-1}^1 \log|x-y| \frac{\partial}{\partial y} \left( U_n(y)  (1-y^2)^{1/2} \right) dy  =  (n+1) \pi U_n 
\end{equation*}
(where $U_n$ denotes the Tchevyshev polynomial of the second kind).
\end{corollary}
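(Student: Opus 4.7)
The plan is to specialize the two diagonalization results already established in this section—Corollary~\ref{diag_ab} for the weighted hypersingular operator and the corollary giving the diagonalization of $S_s$—to the value $s=1/2$, and to identify the resulting Gegenbauer polynomials with the classical Chebyshev polynomials $U_n = C_n^{(1)}$ and $T_n$ (the latter being the well-known renormalized limit $T_n(x) = \tfrac{n}{2}\lim_{\alpha\to 0}\alpha^{-1}C_n^{(\alpha)}(x)$ for $n\ge 1$, with $T_0 = 1$).

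For the second (hypersingular) identity the specialization is immediate. At $s=1/2$ we have $C_n^{(s+1/2)} = C_n^{(1)} = U_n$, the weight $\omega^{1/2}(y) = (1-y^2)^{1/2}$, and the eigenvalue $\lambda_n^{1/2} = \Gamma(n+2)/n! = n+1$. Using the logarithmic-kernel form of the weighted operator $(-\Delta)_\omega^{1/2}$—which is the $s\to 1/2$ limit of~\eqref{eq:weighted_hypersingular} as permitted by Remark~\ref{const_term} and the log form~\eqref{eq:hypersingular_s12}—the eigenvalue identity~\eqref{eigenfuncts} of Corollary~\ref{diag_ab} on $(-1,1)$ is precisely the stated formula for $U_n$.

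For the first identity, the same direct substitution fails because the exponent $s-1/2 = 0$ makes the Gegenbauer polynomial $C_n^{(0)}$ degenerate ($C_n^{(0)}\equiv 0$ for $n\ge 1$), so the $S_s$ diagonalization corollary is trivially $0=0$ at $s=1/2$. I will therefore apply that corollary at $s=1/2+\epsilon$, divide both sides by $\epsilon$, and pass to the limit $\epsilon\to 0^+$. Using $\lim_{\epsilon\to 0}\epsilon^{-1}C_n^{(\epsilon)}(x) = (2/n)T_n(x)$ and $\mu_n^{1/2+\epsilon} = -\Gamma(n+2\epsilon)/n! \to -(n-1)!/n! = -1/n$, this produces
\[
S_{1/2}\bigl[(1-y^2)^{-1/2}T_n(y)\bigr](x) = -\tfrac{1}{n}T_n(x) \qquad (n\ge 1).
\]
Expanding the definition~\eqref{Tsdef_eq2} of $S_{1/2}$ on $(-1,1)$ (so $b-a=2$) and using orthogonality $\int_{-1}^1 T_n(y)(1-y^2)^{-1/2}dy=0$ for $n\ge 1$, the additive $-\log 2$ correction vanishes and the first identity follows.

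For the exceptional case $n=0$ the target value is the classical constant $\int_{-1}^1 \log|x-y|(1-y^2)^{-1/2}\,dy = -\pi\log 2$, which is $x$-independent. This can be read off from Corollary~\ref{coro_diagonal}, where $Q_{00}=S_{1/2}[y^{-1/2}(1-y)^{-1/2}]=-2\log 2$ on $(0,1)$, transported to $(-1,1)$ via the affine change of variable used in the proof of Corollary~\ref{diag_ab}, and then rewritten in terms of $\log|x-y|$ (rather than $\log(|x-y|/2)$) using $\int_{-1}^1 (1-y^2)^{-1/2}dy = \pi$. The only delicate point in the whole argument—and what I expect to be the main technical obstacle—is the justification of the interchange of the limit $\epsilon\to 0$ with the principal-value/singular integral defining $S_{1/2+\epsilon}$; this however is controlled by the joint analyticity in $s$ already exploited in Lemma~\ref{lemma_analytic} together with the uniform integrability of the weakly singular kernels $|x-y|^{-2s}$ and $\log|x-y|$ on $(-1,1)$, so dominated convergence suffices.
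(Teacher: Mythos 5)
Your argument is correct, and the route is essentially what the paper's preceding results are set up to deliver: the $U_n$ identity is the direct specialization of Corollary~\ref{diag_ab} at $s=1/2$, using $C_n^{(1)}=U_n$, $\lambda_n^{1/2}=\Gamma(n+2)/n!=n+1$, and the logarithmic form of the kernel with its $1/\pi$ prefactor; and the $T_n$ identity ultimately derives from the diagonalization of $S_s[\w^{s-1}\,\cdot\,]$ together with Chebyshev orthogonality to kill the $-\log 2$ correction.

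One simplification is worth flagging. Your $\epsilon\to 0$ limiting argument for the $T_n$ part is valid, but it routes around an obstacle that is only notational. The degeneracy $C_n^{(0)}\equiv 0$ for $n\ge 1$ belongs to the Gegenbauer \emph{parameterization}, not to the underlying family of orthogonal polynomials for the weight $(1-y^2)^{-1/2}$. At $s=1/2$ that family is exactly $\{T_n\}$, and the same argument used in Theorem~\ref{teo:diagonalform}/Corollary~\ref{coro_diagonal} (the map $Q$ has upper-triangular monomial matrix, so its degree-$n$ polynomial eigenfunction is the degree-$n$ orthogonal polynomial for the weight, with eigenvalue $Q_{nn}$) applies verbatim: $T_n$ is the eigenfunction and the eigenvalue is $Q_{nn}=-\Gamma(2s+n-1)/n!\big|_{s=1/2}=-1/n$. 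No passage to the limit, and hence none of the uniform-integrability estimates you anticipate as a ``main technical obstacle,'' is needed.

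Two side remarks that your computation quietly surfaces. First, the $s=1/2$ specific values displayed in~\eqref{diagonal_entries} ($P_{nn}=2n$, $Q_{nn}=-2/n$) disagree with the general formulas $P_{nn}=\Gamma(2s+n+1)/n!=n+1$ and $Q_{nn}=-\Gamma(2s+n-1)/n!=-1/n$ at $s=1/2$ by a factor of~$2$; the values $-\pi/n$ and $(n+1)\pi$ in the present corollary are consistent with the general formulas, not with~\eqref{diagonal_entries}. Second, you are right that the bare integral at $n=0$ equals $-\pi\log 2$; the value $-2\log 2$ displayed in the corollary is $S_{1/2}\bigl[(1-y^2)^{-1/2}\bigr]$, i.e.\ it carries the $1/\pi$ normalization and the $\log(|x-y|/(b-a))$ rescaling of~\eqref{Tsdef_eq2}, so the statement of the $n=0$ line is not on the same footing as the $n\ne 0$ line as written.
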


\section{Regularity Theory}
\label{regularity}
This section studies the regularity of solutions of the fractional
Laplacian equation~\eqref{eq:fraccionario_dirichlet} under various
smoothness assumptions on the right-hand side $f$--including
treatments in both Sobolev and analytic function spaces, and for
multi-interval domains $\Omega$ as in
Definition~\ref{union_intervals_def}.  In particular,
Section~\ref{Sobolev} introduces certain weighted Sobolev spaces
$H^r_{s}(\W)$ (which are defined by means of expansions in Gegenbauer
polynomials together with an associated norm). The space $A_\rho$ of
analytic functions in a certain ``Bernstein Ellipse''
$\mathcal{B}_\rho$ is then considered in
Section~\ref{single_interval_analytic}. The main result in
Section~\ref{Sobolev} (resp. Section~\ref{single_interval_analytic})
establishes that for right-hand sides $f$ in the space $H^r_{s}(\W)$
with $r\geq 0$ (resp. the space $A_\rho(\W)$ with $\rho >0$) the
solution $u$ of equation~\eqref{eq:fraccionario_dirichlet} can be
expressed in the form $u(x) = \w^s(x) \phi(x)$, where $\phi$ belongs
to $H^{r+2s}_s(\W)$ (resp. to $A_\rho(\W)$). Sections~\ref{Sobolev}
and~\ref{single_interval_analytic} consider the single-interval case;
generalizations of all results to the multi-interval context are
presented in Section~\ref{regularity_multi_int}. The theoretical
background developed in the present Section~\ref{regularity} is
exploited in Section~\ref{HONM} to develop and analyze a class of
effective algorithms for the numerical solution of
equation~\eqref{eq:fraccionario_dirichlet} in multi-interval domains
$\Omega$.

\subsection{Sobolev Regularity, single interval case}\label{Sobolev}

In this section we define certain weighted Sobolev spaces, which
provide a sharp regularity result for the weighted Fractional
Laplacian $(-\Delta)^s_\w$ (Theorem \ref{teo_extended}) as well as a
natural framework for the analysis of the high order numerical methods
proposed in Section~\ref{HONM}.  It is noted that these spaces
coincide with the non-uniformly weighted Sobolev spaces introduced
in~\cite{BabuskaGuo}; Theorem~\ref{gegen_embedding_Hr} below provides
an embedding of these spaces into spaces of continuously
differentiable functions.  For notational convenience, in the present
discussion leading to the definition~\ref{def:sobolev} of the Sobolev
space $H^r_s(\W)$, we restrict our attention to the domain $\W =
(-1,1)$; the corresponding definition for general multi-interval
domains then follows easily.

In order to introduce the weighted Sobolev spaces we note that the set
of Gegenbauer polynomials $C^{(s + 1/2)}_n$ constitutes an orthogonal
basis of $L^2_s(-1,1)$ (cf. ~\eqref{weighted_L2}). The $L^2_s$ norm of 
a Gegenbauer polynomial (see ~\cite[eq 22.2.3]{AbramowitzStegun}), is given by 
\begin{equation} 
\label{eq:norm_gegen}
h_j^{(s+1/2)} = \left\| C^{(s + 1/2)}_j \right\|_{L^2_s(-1,1)} = \sqrt{\frac{2^{-2s}\pi}{\Gamma^2(s + 1/2)} \frac{\Gamma(j+2s+1)}{\Gamma(j+1)(j+s+1/2)}}.
\end{equation}
\begin{definition}\label{normal}
  Throughout this paper $\tilde{C}^{(s + 1/2)}_j$ denotes the
  normalized polynomial $C^{(s + 1/2)}_j / h_j^{(s+1/2)}$.
\end{definition}
Given a function $v\in L^2_s(-1,1)$, we have the following expansion
\begin{equation}\label{exp_gegen}
v (x) = \sum_{j=0}^\infty v_{j,s} \tilde{C}^{(s + 1/2)}_j (x), 
\end{equation}
which converges in $L^2_s(-1,1)$, and where
\begin{equation}\label{gegen_coef}
  v_{j,s} = \int_{-1}^1 v(x) \tilde{C}^{(s + 1/2)}_j (x) (1-x^2)^s dx.
\end{equation}

In view of the expression
  \begin{equation}\label{gegen_poly_der}
  \frac{d}{dx} C^{(\alpha)}_j (x) = 2\alpha C_{j-1}^{(\alpha + 1)}(x), \ j \geq 1,
  \end{equation}
  for the derivative of a Gegenbauer polynomial (see e.g.~\cite[eq. 4.7.14]{szego}), we have
\begin{equation}
  \frac{d}{dx} \tilde{C}^{(s+1/2)}_j (x) = (2s+1) \frac{h^{(s+3/2)}_{j-1}}{h^{(s+1/2)}_j} \tilde{C}^{s+3/2}_{j-1}.
\end{equation}
Thus, using term-wise differentiation in~\eqref{exp_gegen} we may
conjecture that, for sufficiently smooth functions $v$, we have
\begin{equation}\label{k-der}
  v^{(k)}(x) =\sum_{j=k}^\infty v_{j-k,s+k}^{(k)} \tilde {C}_{j-k}^{(s+k+1/2)}(x)
\end{equation}
where $ v^{(k)}(x)$ denotes the $k$-th derivative of the function
$v(x)$ and where, calling
\begin{equation}\label{A_nk_def}
  A_j^{k} = \prod_{r=0}^{k-1} \frac{h^{(s+3/2+r)}_{j-1-r}}{h^{(s+1/2+r)}_{j-r}} (2(s+r)+1) = 2^k \frac{h^{(s+1/2+k)}_{j-k}}{h^{(s+1/2)}_{j}} \frac{\Gamma(s+1/2+k)}{\Gamma(s+1/2)},
\end{equation}
the coefficients in~\eqref{k-der} are given by
\begin{equation}\label{k-der-coeffs}
  v_{j-k,s+k}^{(k)} = A_j^{k}\, v_{j,s}.
\end{equation}

Lemma~\ref{lemma_gegen_parts} below provides, in particular, a
rigorous proof of~\eqref{k-der} under minimal hypothesis.  Further,
the integration by parts formula established in that lemma together
with the asymptotic estimates on the factors $B_j^{k}$ provided in
Lemma~\ref{A_jk_lemma}, then allow us to relate the smoothness of a
function $v$ and the decay of its Gegenbauer coefficients; see
Corolary~\ref{gegen_decay_coro}.
\begin{lemma}[Integration by parts]\label{lemma_gegen_parts}
  Let $k\in \mathbb{N}$ and let $v \in C^{k-2}[-1,1]$ such that for a
  certain decomposition $[-1,1] = \bigcup_{i=1}^n
  [\alpha_i,\alpha_{i+1}]$ ($-1=\alpha_1<\alpha_i<\alpha_{i+1}
  <\alpha_n=1$) and for certain functions $\tilde v_i \in
  C^{k}[\alpha_i,\alpha_{i+1}]$ we have $v(x)=\tilde v_i(x)$ for all
  $x\in (\alpha_i,\alpha_{i+1})$ and $1\leq i\leq n$.  Then for $j\geq
  k$ the $s$-weighted Gegenbauer coefficients $v_{j,s}$ defined in
  equation~\eqref{gegen_coef} satisfy
\begin{equation}\label{gegen_coef_parts_k}
\begin{split}
v_{j,s}  = B_j^{k} \int_{-1}^1 & \tilde v^{(k)}(x) \tilde{C}_{j-k}^{(s+k+1/2)}(x) (1-x^2)^{s+k} dx \\
 &-B_j^{k} \sum_{i=1}^n \left[ \tilde v^{(k-1)}(x) \tilde{C}_{j-k}^{(s+k+1/2)}(x) (1-x^2)^{s+k} \right]_{\alpha_i}^{\alpha_{i+1}},
\end{split}
\end{equation}
where
\begin{equation}\label{B_jk_def}
  B_j^k = \frac{h_{j-k}^{(s+k+1/2)}}{h_j^{(s+1/2)}}  \prod_{r=0}^{k-1} \frac{(2(s+r)+1)}{(j-r)(2s+r+j+1)}.
\end{equation}
With reference to equation~\eqref{A_nk_def}, further, we have
$A_j^k=\frac{1}{B_j^k}$. In particular, under the additional
assumption that $v \in C^{k-1}[-1,1]$ the
relation~\eqref{k-der-coeffs} holds.
\end{lemma}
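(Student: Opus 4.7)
The plan is to prove \eqref{gegen_coef_parts_k} by an iterated integration-by-parts argument driven by the Sturm--Liouville structure of the Gegenbauer equation. Writing the Gegenbauer ODE for $C_n^{(\alpha)}$ in the form $\frac{d}{dx}[(1-x^2)^{\alpha+1/2}(C_n^{(\alpha)})'] + n(n+2\alpha)(1-x^2)^{\alpha-1/2} C_n^{(\alpha)} = 0$ and using the derivative formula $(C_n^{(\alpha)})' = 2\alpha\,C_{n-1}^{(\alpha+1)}$ (\eqref{gegen_poly_der}) with $\alpha = s+r+1/2$ yields the key identity
\begin{equation*}
(1-x^2)^{s+r}\,C_{n}^{(s+r+1/2)}(x) \;=\; -\frac{2(s+r)+1}{n\,(n+2(s+r)+1)}\,\frac{d}{dx}\!\left[(1-x^2)^{s+r+1}\,C_{n-1}^{(s+r+3/2)}(x)\right].
\end{equation*}
This lowers the polynomial degree by one, raises the order index by one, and raises the weight exponent by one, at the price of producing an exact $x$-derivative that can be absorbed by integration by parts.

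Starting from \eqref{gegen_coef}, I would substitute the key identity (with $r=0$, $n=j$), split the integral over the sub-intervals $[\alpha_i,\alpha_{i+1}]$, and integrate by parts on each piece to transfer a derivative onto $\tilde v_i$. The resulting expression has $C_{j-1}^{(s+3/2)}(x)(1-x^2)^{s+1}$ in place of the original $C_j^{(s+1/2)}(x)(1-x^2)^s$ and carries boundary contributions. The outer terms at $x=\pm1$ vanish because $(1-x^2)^{s+1}=0$ there (we have $s>0$), and the interior terms at the $\alpha_i$ telescope into a sum of jump evaluations $[\tilde v\,C_{j-1}^{(s+3/2)}(x)(1-x^2)^{s+1}]_{\alpha_i}^{\alpha_{i+1}}$, which cancel whenever $v$ is continuous there. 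Iterating $k$ times produces the integrand of \eqref{gegen_coef_parts_k}; in the $(r+1)$-th iteration the boundary terms involve $\tilde v^{(r)}$, and they telescope away precisely for $r\le k-2$, that is, for all but the final round. The leftover boundary sum from the $k$-th round, in which $r=k-1$, is exactly the one appearing in \eqref{gegen_coef_parts_k}.

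The constant bookkeeping proceeds as follows. The $k$ applications of the key identity accumulate the product $\prod_{r=0}^{k-1}\frac{2(s+r)+1}{(j-r)(j-r+2(s+r)+1)}$, and introducing the normalization $\tilde C_n^{(\alpha)} = C_n^{(\alpha)}/h_n^{(\alpha)}$ at the two ends of the computation inserts the factor $h_{j-k}^{(s+k+1/2)}/h_j^{(s+1/2)}$, reproducing $B_j^k$ as in \eqref{B_jk_def}. To verify $A_j^k = 1/B_j^k$, I would rewrite the $\Gamma$-ratio in \eqref{A_nk_def} as $\Gamma(s+1/2+k)/\Gamma(s+1/2) = 2^{-k}\prod_{r=0}^{k-1}(2(s+r)+1)$ and evaluate from \eqref{eq:norm_gegen} the closed form
\begin{equation*}
\left(\frac{h_{j-k}^{(s+k+1/2)}}{h_j^{(s+1/2)}}\right)^{\!2} \;=\; 4^{-k}\,\frac{\Gamma^2(s+1/2)}{\Gamma^2(s+1/2+k)}\,\prod_{r=0}^{k-1}(j-r)(j+2s+r+1),
\end{equation*}
after which a direct cancellation gives $A_j^k B_j^k = 1$. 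Finally, under the stronger hypothesis $v\in C^{k-1}[-1,1]$, the residual boundary sum in \eqref{gegen_coef_parts_k} also telescopes to zero (by continuity of $v^{(k-1)}$), and recognizing the remaining integral as the $(s{+}k)$-weighted Gegenbauer coefficient $v^{(k)}_{j-k,s+k}$ of $v^{(k)}$ yields \eqref{k-der-coeffs}.

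The main obstacle is bookkeeping rather than conceptual: one must check that the accumulated constants package precisely into $B_j^k$ and into $1/A_j^k$, and that the regularity assumption $v\in C^{k-2}[-1,1]$ is exactly enough to annihilate every interior boundary contribution except the last. The Sturm--Liouville identity is classical, and the positive weight exponent $s+r+1>0$ guarantees that the endpoints $\pm 1$ never contribute at any stage of the induction.
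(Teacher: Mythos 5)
Your argument is correct and coincides with the paper's proof in all essentials. The paper's two-line proof invokes the Abramowitz--Stegun antiderivative formula (eq.\ 22.13.2) and says ``iterated integration by parts plus normalization''; your key identity
\begin{equation*}
(1-x^2)^{s+r}\,C_{n}^{(s+r+1/2)}(x) = -\frac{2(s+r)+1}{n\,(n+2(s+r)+1)}\,\frac{d}{dx}\!\left[(1-x^2)^{s+r+1}\,C_{n-1}^{(s+r+3/2)}(x)\right]
\end{equation*}
is precisely the differentiated form of that same A\&S relation, which you obtain by rewriting the Gegenbauer ODE in Sturm--Liouville form and applying \eqref{gegen_poly_der}. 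The iterated integration by parts, the telescoping cancellation of the interior boundary terms for $r\le k-2$ (guaranteed by $v\in C^{k-2}[-1,1]$), the vanishing of the endpoint terms because $s+r+1>0$, the accumulation of the product $\prod_{r=0}^{k-1}\frac{2(s+r)+1}{(j-r)(j+r+2s+1)}$ (which is identical to the product in \eqref{B_jk_def} after expanding $(j-r)+2(s+r)+1=j+2s+r+1$), and the $h$-ratio from passing between $C$ and $\tilde C$ all match the intended argument, as does the final observation that $C^{k-1}$ continuity kills the residual boundary sum to yield \eqref{k-der-coeffs}. Deriving the auxiliary identity from the ODE rather than quoting it is a minor, defensible variation, not a different route.
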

\begin{proof}
  Equation~\eqref{gegen_coef_parts_k} results from iterated
  applications of integration by parts together with the
  relation~\cite[eq. 22.13.2]{AbramowitzStegun}
$$ \frac{\ell(2t+\ell+1)}{2t+1} \int (1-y^2)^{t} C_\ell^{(t+1/2)}(y) dy =  
- (1-x^2)^{t+1} C_{\ell-1}^{(t+3/2)}(x). $$ and subsequent
normalization according to Definition~\ref{normal}. The validity of
the relation $A_j^k=\frac{1}{B_j^k}$ can be checked easily.
\end{proof}

\begin{lemma}\label{A_jk_lemma} There exist constants $C_1$ and $C_2$ such that the
  factors $B_j^k$ in equation~\eqref{A_nk_def} satisfy
\begin{equation*}\label{A_jk_inequality}
  C_1 j^{-k} < |B_j^k| < C_2 j^{-k}
\end{equation*}
\end{lemma}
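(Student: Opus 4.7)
The plan is to extract the leading-order behavior of $B_j^k$ in $j$ by separately estimating the two factors: the ratio of normalization constants $h_{j-k}^{(s+k+1/2)}/h_j^{(s+1/2)}$ and the explicit product $\prod_{r=0}^{k-1}(2(s+r)+1)/\big((j-r)(2s+r+j+1)\big)$. The key tool throughout is the classical Gamma-function asymptotic
\[
\lim_{j\to\infty} j^{b-a}\,\frac{\Gamma(j+a)}{\Gamma(j+b)}=1,
\]
already invoked in Remark~\ref{eigenvalue_asymptotics} of the excerpt.

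First I would rewrite the squared ratio of normalization constants using the explicit formula \eqref{eq:norm_gegen}. A direct substitution gives
\[
\frac{\big(h_{j-k}^{(s+k+1/2)}\big)^2}{\big(h_j^{(s+1/2)}\big)^2}
=\frac{2^{-2k}\,\Gamma^2(s+1/2)}{\Gamma^2(s+k+1/2)}\cdot
\frac{\Gamma(j+2s+k+1)}{\Gamma(j+2s+1)}\cdot
\frac{\Gamma(j+1)}{\Gamma(j-k+1)},
\]
where I used $(j-k)+s+k+1/2=j+s+1/2$ to cancel the two linear denominators appearing in \eqref{eq:norm_gegen}. Each of the Gamma ratios grows like $j^{k}$, so $h_{j-k}^{(s+k+1/2)}/h_j^{(s+1/2)}$ is comparable to $j^{k}$.

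Next I would handle the explicit product. Its numerator depends only on $s$ and $k$, while each factor $(j-r)(2s+r+j+1)$ in the denominator is a polynomial of degree $2$ in $j$, positive for all $j\ge k$, and comparable to $j^{2}$; hence the product is comparable to $j^{-2k}$. Multiplying the two estimates yields $|B_j^k|\asymp j^{k}\cdot j^{-2k}=j^{-k}$, which is the claimed bound.

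To turn the asymptotic $\asymp$ into the strict two-sided inequality for all $j\ge k$, I would observe that $j^{k}|B_j^k|$ is a continuous, strictly positive function of the integer $j$ (all Gamma factors are evaluated at positive arguments and no denominator vanishes for $j\ge k$) which, by the above, has a finite nonzero limit as $j\to\infty$. Hence $j^k|B_j^k|$ is bounded above and below away from zero on $\{j\ge k\}$, giving the constants $C_1,C_2$. The verification that $A_j^k=1/B_j^k$, which is implicitly needed elsewhere, is purely algebraic and follows by writing both products in telescoping form; the only mild obstacle in the proof is the bookkeeping of the Gamma-function shifts and checking that the cancellations indeed yield the simple asymptotic $j^{-k}$ rather than a different power.
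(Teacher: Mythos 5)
Your proposal is correct and follows essentially the same route as the paper: estimate $h_{j-k}^{(s+k+1/2)}/h_j^{(s+1/2)}\sim j^{k}$ via the Gamma-ratio asymptotic from Remark~\ref{eigenvalue_asymptotics}, estimate the explicit product as $\sim j^{-2k}$, and combine. Your version is slightly more explicit about the algebraic cancellation of the $(j+s+1/2)$ factor in the squared norm ratio and about promoting the asymptotic equivalence to a two-sided bound for all $j\ge k$, but the core argument is the same.
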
 
\begin{proof}
  In view of the relation $\lim_{j\to\infty}
  j^{b-a}\Gamma(j+a)/\Gamma(j+b) = 1$
  (see~\cite[6.1.46]{AbramowitzStegun}) it follows that
  $h_j^{(s+1/2)}$ in equation~\eqref{eq:norm_gegen} satisfies
  \begin{equation}\label{gegen_norm_est}
   \lim_{j\to\infty} j^{1/2-s} h_j^{(s+1/2)} \ne 0
  \end{equation}
  and, thus, letting
\begin{equation}\label{gegen_ratio}
  q_j^k=\frac{h_{j-k}^{(s+k+1/2)}}{h_j^{(s+1/2)}},
\end{equation}
we obtain 
\begin{equation}\label{q_bound}
  \lim_{j\to\infty} q_j^k/j^{k} \ne 0.
\end{equation}
The lemma now follows by estimating the asymptotics of the product
term on the right-hand side of~\eqref{B_jk_def} as $j\to\infty$.
\end{proof}
\begin{corollary}\label{gegen_decay_coro}
  Let $k\in \mathbb{N}$ and let $v$ satisfy the hypothesis of
  Lemma~\ref{lemma_gegen_parts}. Then the Gegenbauer coefficients
  $v_{j,s}$ in equation~\eqref{gegen_coef} are quantities of order
  $O(j^{-k})$ as $j\to\infty$:
$$|v_{j,s}| < C j^{-k}$$
for a constant $C$ that depends on $v$ and $k$.
\end{corollary}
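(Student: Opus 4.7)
The plan is to apply the integration by parts formula \eqref{gegen_coef_parts_k} from Lemma~\ref{lemma_gegen_parts} and then bound each of the three resulting contributions (the prefactor $B_j^k$, the integral, and the boundary terms) uniformly in $j$. The prefactor is immediate: Lemma~\ref{A_jk_lemma} yields $|B_j^k| \leq C_2 j^{-k}$, so it will suffice to show that the quantity in brackets in \eqref{gegen_coef_parts_k} stays bounded as $j\to\infty$.

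For the integral term, I would apply the Cauchy--Schwarz inequality in the weighted space $L^2_{s+k}(-1,1)$ (with weight $(1-x^2)^{s+k}$). Since $\tilde{C}_{j-k}^{(s+k+1/2)}$ has unit $L^2_{s+k}$-norm by construction (Definition~\ref{normal}), the integral is controlled by $\|\tilde v^{(k)}\|_{L^2_{s+k}}$, which is finite because $v^{(k)}$ is piecewise continuous on $[-1,1]$ and hence bounded on each subinterval $[\alpha_i,\alpha_{i+1}]$. This bound is independent of $j$.

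The boundary terms split into two types. At the outer endpoints $x=\pm 1$ the factor $(1-x^2)^{s+k}$ vanishes (since $s+k\geq s>0$), so those contributions drop out. The genuinely interior endpoints $\alpha_i$, $1<i<n$, are where the main obstacle lies: one must control $\tilde{C}_{j-k}^{(s+k+1/2)}(\alpha_i)$ uniformly in $j$ for $\alpha_i\in(-1,1)$ fixed. I would invoke the classical pointwise asymptotic for Gegenbauer polynomials (see, e.g., Szegő's book), which gives $|C_j^{(\alpha)}(x)|\leq C(x)\, j^{\alpha-1}$ for every fixed $x\in(-1,1)$ and $\alpha>0$. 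Combined with the asymptotic $h_j^{(\alpha)}\sim j^{\alpha-1}$ derived from \eqref{eq:norm_gegen} exactly as in the proof of Lemma~\ref{A_jk_lemma}, this yields
\begin{equation*}
\bigl|\tilde{C}_{j-k}^{(s+k+1/2)}(\alpha_i)\bigr| = \frac{\bigl|C_{j-k}^{(s+k+1/2)}(\alpha_i)\bigr|}{h_{j-k}^{(s+k+1/2)}} = O(1) \quad \text{as } j\to\infty,
\end{equation*}
so each interior boundary contribution is bounded by a constant times $|\tilde v^{(k-1)}(\alpha_i^\pm)|$, which is finite by hypothesis.

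Combining the three bounds, the bracketed expression in \eqref{gegen_coef_parts_k} is $O(1)$ uniformly in $j$, and multiplication by $|B_j^k|\leq C_2 j^{-k}$ yields $|v_{j,s}|\leq C j^{-k}$ for a constant $C$ depending only on $v$ and $k$. The main subtlety is thus the uniform pointwise bound on the normalized Gegenbauer polynomials at the interior breakpoints; everything else reduces to routine application of Cauchy--Schwarz and the already-established asymptotics of the normalizing constants.
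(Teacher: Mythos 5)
Your proof is correct and follows essentially the same route as the paper's: control $B_j^k$ via Lemma~\ref{A_jk_lemma}, bound the integral term by Cauchy--Schwarz in $L^2_{s+k}$, and bound the boundary terms using a Szeg\H{o} estimate for Gegenbauer polynomials together with the normalization asymptotics~\eqref{gegen_norm_est}. The only cosmetic difference is in the boundary-term step: the paper invokes the single uniform estimate $|\sin^{2\lambda-1}\theta\, C_j^{(\lambda)}(\cos\theta)|\le C j^{\lambda-1}$ from Szeg\H{o}~[eq.\ 7.33.6], valid on all of $[-1,1]$, whereas you split off $x=\pm 1$ (where the weight $(1-x^2)^{s+k}$ vanishes identically) and use the fixed-$x$ Darboux bound $|C_j^{(\lambda)}(x)|=O(j^{\lambda-1})$ on interior breakpoints---but these deliver the same $O(1)$ bound on the normalized weighted polynomial, so the arguments are interchangeable.
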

\begin{proof}
  The proof of the corollary proceeds by noting that the factor
  $B_j^{k}$ in equation~\eqref{gegen_coef_parts_k} is a quantity of
  order $j^{-k}$ (Lemma~\ref{A_jk_lemma}), and obtaining bounds for
  the remaining factors in that equation.  These bounds can be
  produced by (i)~applying the Cauchy-Schwartz inequality in the space
  $L^2_{s+k}(-1,1)$ to the $(s+k)$-weighted scalar
  product~\eqref{scalarproduct_L2} that occurs in
  equation~\eqref{gegen_coef_parts_k}; and
  (ii)~using~\cite[eq. 7.33.6]{szego} to estimate the boundary terms
  in equation~\eqref{gegen_coef_parts_k}. The derivation of the bound
  per point~(i) is straightforward. From~\cite[eq. 7.33.6]{szego}, on
  the other hand, it follows directly that for each $\lambda >0$ there
  is a constant $C$ such that
$$ |\sin(\theta)^{2\lambda-1} C^{\lambda}_j(\cos(\theta))| \le C j^{\lambda-1}. $$
Letting $x=\cos(\theta)$, $\lambda =s+k+1/2$ and dividing by the
normalization constant $h_{j}^{(s+k+1/2)}$ we then obtain
$$ \left |\tilde{C}^{s+k+1/2}_j(x)(1-x^2)^{s+k}\right | < Cj^{s+k-1/2} / h_{j}^{(s+k+1/2)}.$$ 
In view of~\eqref{gegen_norm_est}, the right hand side in this equation
is bounded for all $j\geq 0$. The proof now follows from
Lemma~\ref{A_jk_lemma}.
\end{proof}


We now define a class of Sobolev spaces $H^r_s$ 
that, as shown in Theorem~\ref{teo_extended}, completely characterizes
the Sobolev regularity of the weighted fractional Laplacian operator
$(-\Delta)^s_\w$. 
\begin{remark}\label{no-s}
  In what follows, and when clear from the context, we drop the
  subindex $s$ in the notation for Gegenbauer coefficients such as
  $v_{j,s}$ in~\eqref{gegen_coef}, and we write e.g. $v_j=v_{j,s}$,
  $w_j=w_{j,s}$, $f_j=f_{j,s}$, etc.
\end{remark}

\begin{definition}\label{def:sobolev} Let $r,s\in\mathbb{R}$,  
  $r\geq 0$, $s> -1/2$ and, for $v \in L^2_s(-1,1)$ call $v_j$ the
  corresponding Gegenbauer coefficient~\eqref{gegen_coef} (see
  Remark~\ref{no-s}). Then the complex vector space $H^r_s(-1,1) =
  \left\{ v \in L^2_s(-1,1) \colon \sum_{j=0}^\infty (1+j^{2})^r
    |v_j|^2 < \infty \right\}$ will be called the $s$-weighted Sobolev
  space of order $r$.
\end{definition}
\begin{lemma}\label{lemma_hilbert_space}
  Let $r,s\in\mathbb{R}$, $r\geq 0$, $s> -1/2$. Then the
  space $H^r_s(-1,1)$ endowed with the inner product $\langle v, w
  \rangle_s^r = \sum_{j=0}^\infty v_j w_j (1 + j^{2})^r$ and
  associated norm 
\begin{equation}\label{H_r_norm}
  \| v \|_{H_s^r} = \sum_{j=0}^\infty |v_j|^2 (1 + j^{2}
  )^r
\end{equation}
 is a Hilbert space.
\end{lemma}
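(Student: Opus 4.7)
The plan is to identify $H^r_s(-1,1)$ with a weighted $\ell^2$ space of Gegenbauer coefficients via the map $v \mapsto (v_j)_{j\geq 0}$. Since the normalized Gegenbauer polynomials $\{\tilde C_j^{(s+1/2)}\}$ form a complete orthonormal system in $L^2_s(-1,1)$ (by standard orthogonal polynomial theory), this map is a unitary isomorphism $L^2_s(-1,1) \to \ell^2(\mathbb N_0)$, and by Parseval we have $\|v\|_{L^2_s}^2 = \sum_j |v_j|^2$. Under this identification, $H^r_s(-1,1)$ corresponds precisely to the weighted sequence space $\ell^2_r := \{(a_j) : \sum_j (1+j^2)^r |a_j|^2 < \infty\}$, which is a standard Hilbert space. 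The claim thus reduces to transferring the Hilbert structure across this identification.

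Concretely, I would verify the following in order. First, that $H^r_s$ is a complex vector subspace of $L^2_s$: closure under scalar multiplication is obvious, and closure under addition follows from $|v_j+w_j|^2 \le 2(|v_j|^2+|w_j|^2)$. Second, that $\langle v,w\rangle_s^r = \sum_{j=0}^\infty v_j \overline{w_j} (1+j^2)^r$ is a well-defined Hermitian inner product: absolute convergence follows from the Cauchy--Schwarz inequality applied to the sequences $(v_j(1+j^2)^{r/2})$ and $(w_j(1+j^2)^{r/2})$, both of which lie in $\ell^2(\mathbb N_0)$ by the assumption $v,w\in H^r_s$; positive definiteness follows since $\|v\|_{H^r_s}=0$ forces $v_j=0$ for all $j$, and then $v=0$ in $L^2_s$ by completeness of the Gegenbauer basis.

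Third, and the only step requiring any real work, completeness. Given a Cauchy sequence $\{v^{(m)}\}_m$ in $H^r_s$, the bound $(1+j^2)^r \ge 1$ for $r\ge 0$ yields $|v_j^{(m)}-v_j^{(n)}|^2 \le \|v^{(m)}-v^{(n)}\|_{H^r_s}^2$, so each coordinate sequence $\{v_j^{(m)}\}_m$ is Cauchy in $\mathbb C$ and converges to some $v_j$. Setting $M = \sup_m \|v^{(m)}\|_{H^r_s} < \infty$, Fatou's lemma on counting measure gives
\begin{equation*}
\sum_{j=0}^\infty (1+j^2)^r |v_j|^2 \le \liminf_{m\to\infty} \sum_{j=0}^\infty (1+j^2)^r |v_j^{(m)}|^2 \le M^2,
\end{equation*}
so the candidate coefficient sequence lies in $\ell^2_r$. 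In particular $\sum_j |v_j|^2 < \infty$, so $v := \sum_j v_j \tilde C_j^{(s+1/2)}$ converges in $L^2_s$ and has precisely $v_j$ as its Gegenbauer coefficients; hence $v\in H^r_s$. To show $v^{(n)}\to v$ in $H^r_s$, fix $\varepsilon>0$, choose $N$ so that $\|v^{(n)}-v^{(m)}\|_{H^r_s}^2 < \varepsilon$ for $n,m\ge N$, and apply Fatou again in the variable $m$:
\begin{equation*}
\|v^{(n)}-v\|_{H^r_s}^2 = \sum_{j=0}^\infty (1+j^2)^r |v_j^{(n)}-v_j|^2 \le \liminf_{m\to\infty} \|v^{(n)}-v^{(m)}\|_{H^r_s}^2 \le \varepsilon
\end{equation*}
for all $n\ge N$, completing the proof.

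The whole argument is essentially a routine transfer of the well-known completeness of weighted $\ell^2$ spaces through the Gegenbauer expansion; the one place where care is warranted is the double use of Fatou's lemma to pass from convergence of individual coefficients to convergence of the weighted norms, which is where we exploit $r\ge 0$ to obtain termwise control.
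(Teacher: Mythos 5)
Your argument is correct and is precisely the standard completeness proof for sequence-space Sobolev norms: verify well-definedness of the inner product via Cauchy--Schwarz and the Parseval identity, then establish completeness by extracting coordinatewise limits from a Cauchy sequence and applying Fatou's lemma twice. The paper itself does not write out a proof but simply refers to Kress, \emph{Linear Integral Equations}, Theorem 8.2, where exactly this argument is carried out for the Fourier-based spaces $H^p$; your write-up is therefore essentially identical in approach to the cited source, with the small (and appropriate) correction of inserting the complex conjugate $\overline{w_j}$ in the inner product, which the paper's notation omits despite declaring the space complex.
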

\begin{proof}
  The proof is completely analogous to that of \cite[Theorem
  8.2]{Kress}.
\end{proof}
\begin{remark}\label{gegen_aprox_Hs} 
  By definition it is immediately checked that for every function
  $v\in H^r_s(-1,1)$ the Gegenbauer expansion~\eqref{exp_gegen} with
  expansion coefficients~\eqref{gegen_coef} is convergent in
  $H_s^r(-1,1)$.
\end{remark}
\begin{remark}\label{sobolev_remark}
  In view of the Parseval identity $\|v\|_{L^2_s(-1,1)}^2 =
  \sum_{n=0}^\infty |v_n|^2$ it follows that the Hilbert spaces
  $H^0_s(-1,1)$ and $L^2_s(-1,1)$ coincide. Further, we have the dense
  compact embedding $H^t_s(-1,1)\subset H^r_s(-1,1)$ whenever $r<t$.
  (The density of the embedding follows directly from
  Remark~\ref{gegen_aprox_Hs} since all polynomials are contained in
  $H^r_s(-1,1)$ for every $r$.) Finally, by proceeding as in
  \cite[Theorem 8.13]{Kress} it follows that for any $r>0$,
  $H^r_s(-1,1)$ constitutes an interpolation space between $H^{\lfloor
    r \rfloor}_s(-1,1)$ and $H^{\lceil r \rceil}_s(-1,1)$ in the sense
  defined by \cite[Chapter 2]{BerghLofstrom}.
\end{remark}

Closely related ``Jacobi-weighted Sobolev spaces'' $\mathcal{H}^k_s$
(Definition~\ref{Guo_def}) were introduced
previously~\cite{BabuskaGuo} in connection with Jacobi approximation
problems in the $p$-version of the finite element method. As shown in
Lemma~\ref{sobolev_equivalence} below, in fact, the spaces
$\mathcal{H}^k_s$ coincide with the spaces $H^k_s$ defined above, and
the respective norms are equivalent.
\begin{definition}[Babu\v{s}ka and Guo~\cite{BabuskaGuo}]\label{Guo_def}
  Let $k\in\mathbb{N}\cup \{0\}$ and $r>0$. The $k$-th order
  non-uniformly weighted Sobolev space $\mathcal{H}^k_s(a,b)$ is
  defined as the completion of the set $C^\infty(a,b)$ under the norm
\begin{equation*}\label{gou_norm}
  \|v \|_{\mathcal{H}^k_s} = \left( \sum_{j=0}^k \int_{a}^b |v^{(j)}(x)|^2 \w^{s+j} dx \right)^{1/2} = \left( \sum_{j=0}^k \| v^{(j)} \|_{L^2_{s+j}}^2 \right)^{1/2}.
\end{equation*}  
The $r$-th order space $\mathcal{H}^r_s(a,b)$, in turn, is defined by
interpolation of the spaces $\mathcal{H}^k_s(a,b)$
($k\in\mathbb{N}\cup \{0\}$) by the $K$-method (see ~\cite[Section
3.1]{BerghLofstrom}).
\end{definition}

\begin{lemma}\label{sobolev_equivalence}
  Let $r>0$. The spaces $H^r_s(a,b)$ and $\mathcal{H}^r_s(a,b)$
  coincide, and their corresponding norms $\| \cdot \|_{H^r_s}$ and $\|
  \cdot \|_{\mathcal{H}^r_s}$ are equivalent.
\end{lemma}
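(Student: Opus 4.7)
The plan is to establish the equivalence first at integer orders $r=k\in\mathbb{N}\cup\{0\}$ by a direct Gegenbauer-coefficient computation, and then to extend it to all non-integer $r>0$ via an abstract interpolation argument. Throughout, I would reduce to the case $(a,b)=(-1,1)$, since the affine change of variables $\tilde x = 2(x-a)/(b-a)-1$ transforms Gegenbauer expansions and weights $\w^{s+\ell}$ in one interval into those in the other, with all the relevant norms scaled only by finite multiplicative constants.

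For the integer case, fix $k\geq 1$ and take $v\in C^\infty[-1,1]$. Applying Lemma~\ref{lemma_gegen_parts} with the trivial partition ($n=1$), the boundary contributions at $x=\pm 1$ vanish because the factor $(1-x^2)^{s+k}$ is zero there (which requires $s+k>0$, guaranteed by $s>-1/2$ and $k\geq 1$); hence $v_{j,s}=B_j^k\,v^{(k)}_{j-k,s+k}$ for $j\geq k$. Since $\{\tilde C_{m}^{(s+\ell+1/2)}\}_{m\geq 0}$ is an orthonormal basis of $L^2_{s+\ell}(-1,1)$, Parseval's identity together with $A_j^\ell = 1/B_j^\ell$ gives
\begin{equation*}
\|v^{(\ell)}\|_{L^2_{s+\ell}}^2 = \sum_{j\geq \ell} |A_j^\ell|^2 |v_{j,s}|^2, \qquad 0\leq \ell \leq k.
\end{equation*}
The asymptotic $|A_j^\ell|\sim j^\ell$ from Lemma~\ref{A_jk_lemma} allows one to identify, up to multiplicative constants, the dominant $\ell=k$ contribution with $\sum_{j\geq k} j^{2k}|v_{j,s}|^2$, while the $\ell=0$ term supplies $\sum_j |v_{j,s}|^2$ and absorbs the low-frequency modes $j<k$. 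Combining these yields two-sided bounds
\begin{equation*}
c_1 \sum_{j=0}^\infty (1+j^2)^k |v_{j,s}|^2 \;\leq\; \|v\|^2_{\mathcal{H}^k_s} \;\leq\; c_2 \sum_{j=0}^\infty (1+j^2)^k |v_{j,s}|^2,
\end{equation*}
i.e.\ $\|v\|_{\mathcal{H}^k_s}\sim \|v\|_{H^k_s}$ on $C^\infty[-1,1]$. Since finite partial sums of Gegenbauer expansions belong to $C^\infty[-1,1]$ and are dense in $H^k_s(-1,1)$ by Remark~\ref{gegen_aprox_Hs}, and since $\mathcal{H}^k_s(-1,1)$ is by Definition~\ref{Guo_def} the completion of $C^\infty$ under $\|\cdot\|_{\mathcal{H}^k_s}$, the two spaces coincide as sets with equivalent norms. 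The case $k=0$ is trivial since $H^0_s = L^2_s = \mathcal{H}^0_s$.

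For non-integer $r>0$, both families are characterized as $K$-method interpolation spaces between consecutive integer-order spaces: $\mathcal{H}^r_s$ directly by Definition~\ref{Guo_def}, and $H^r_s$ via the identification stated in Remark~\ref{sobolev_remark}. Because norm equivalence of endpoint pairs is preserved by the interpolation functor (see~\cite[Chapter 3]{BerghLofstrom}), the integer-order equivalence just established propagates to every intermediate $r$, completing the proof. The main obstacle is the bookkeeping in the integer case: one must verify that the iterated integration-by-parts boundary terms vanish for genuinely smooth (not compactly supported) $v$, and that the mixed sum $\sum_{\ell=0}^k \sum_{j\geq \ell} j^{2\ell}|v_{j,s}|^2$ assembles, via the equivalence $(1+j^{2k})\sim(1+j^2)^k$, into the single Sobolev-type norm $\sum_j (1+j^2)^k |v_{j,s}|^2$, taking appropriate care of the finitely many low-frequency modes $j<k$.
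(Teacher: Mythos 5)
Your proof is correct and its integer-order argument is essentially the paper's own: both reduce to $v\in C^\infty[-1,1]$, combine Lemma~\ref{lemma_gegen_parts} with Parseval's identity in $L^2_{s+\ell}$ and the asymptotics $|A_j^\ell|\sim j^\ell$ of Lemma~\ref{A_jk_lemma}, and close with the elementary inequalities $(1+j^{2k})\le(1+j^2)^k\le 2^k(1+j^{2k})$. For non-integer $r$ the paper simply defers to~\cite[Theorem 2.1 and Remark 2.3]{BabuskaGuo}, whereas you spell out the interpolation step (equivalent endpoint norms are preserved by the $K$-functor, using the characterization of $H^r_s$ asserted in Remark~\ref{sobolev_remark}); this is a harmless elaboration rather than a different method.
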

\begin{proof}
  A proof of this lemma for all $r>0$ can be found in \cite[Theorem
  2.1 and Remark 2.3]{BabuskaGuo}. In what follows we present an
  alternative proof for non-negative integer values of $r$:
  $r=k\in\mathbb{N}\cup \{0\}$. In this case it suffices to show that
  the norms $\| \cdot \|_{H^k_s}$ and $\| \cdot \|_{\mathcal{H}^k_s}$
  are equivalent on the dense subset $C^\infty[a,b]$ of both
  $H^k_s(a,b)$ (Remark~\ref{gegen_aprox_Hs}) and
  $\mathcal{H}^k_s(a,b)$. But, for $v\in C^\infty[a,b]$,
  using~\eqref{k-der}, Parseval's identity in $L^2_{s+k}$ and
  Lemma~\ref{lemma_gegen_parts} we see that for every integer $k\geq
  0$ we have $\| v^{(k)} \|_{L^2_{s+k}} = \sum_{j=k}^\infty
  |v_{j-k,s+k}^{(k)}|^2 = \sum_{j=k}^\infty |v_{j,s}|^2 /
  |B_j^k|^2$. From Lemma~\ref{A_jk_lemma} we then obtain
$$ D_1 \sum_{j=k}^\infty |v_{j,s}|^2 j^{2k} \le  \| v^{(k)} \|_{L^2_{s+k}}^2 \le  D_2 \sum_{j=k}^\infty |v_{j,s}|^2 j^{2k} $$
for certain constants $D_1$ and $D_2$, where $v_{j-k,s+k}^{(k)}$.  In
view of the inequalities
 $$ (1+j^{2k}) \le (1+j^2)^k \le (2j^2)^k \le 2^k(1+j^{2k}) $$ 
 the claimed norm equivalence for $r=k\in\mathbb{N}\cup \{0\}$ and
 $v\in C^\infty[a,b]$ follows. 
\end{proof}

Sharp regularity results for the Fractional Laplacian in the Sobolev space
$H^r_s(a,b)$ can now be obtained easily.
\begin{theorem}\label{teo_extended}
  Let $r\geq 0$. Then the weighted fractional Laplacian
  operator~\eqref{eq:weighted_fractional} can be extended uniquely to
  a continuous linear map $(-\Delta)^s_\w$ from $H_s^{r+2s}(a,b)$ into
  $H_s^{r}(a,b)$. The extended operator is bijective and bicontinuous.
\end{theorem}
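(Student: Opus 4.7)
The plan rests entirely on the diagonal form from Corollary~\ref{diag_ab}: the normalized Gegenbauer polynomials $\tilde{C}^{(s+1/2)}_j$ ($j\geq 0$) are an orthonormal basis of $L^2_s(a,b)$ consisting of eigenfunctions of $(-\Delta)^s_\w$, with eigenvalues $\lambda_j^s=\Gamma(2s+j+1)/j!$ satisfying, by Remark~\ref{eigenvalue_asymptotics}, the two-sided bound
\[
c_1(1+j^2)^s \;\leq\; \lambda_j^s \;\leq\; c_2(1+j^2)^s \qquad (j\geq 0)
\]
for some constants $0<c_1<c_2$.

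Given this, I would first define the candidate extension directly on the Gegenbauer scale: for $v=\sum_{j} v_j\,\tilde{C}^{(s+1/2)}_j \in H^{r+2s}_s(a,b)$ set
\[
\Lambda v \;:=\; \sum_{j=0}^\infty \lambda_j^s\, v_j\,\tilde{C}^{(s+1/2)}_j.
\]
The upper eigenvalue bound together with Definition~\ref{def:sobolev} yields
\[
\|\Lambda v\|_{H^r_s}^2 \;=\; \sum_{j=0}^\infty (\lambda_j^s)^2 |v_j|^2 (1+j^2)^r \;\leq\; c_2^{2}\,\|v\|_{H^{r+2s}_s}^2,
\]
so $\Lambda\colon H^{r+2s}_s(a,b)\to H^r_s(a,b)$ is bounded. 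By Corollary~\ref{diag_ab}, $\Lambda$ agrees with $(-\Delta)^s_\w$ on every finite linear combination of Gegenbauer polynomials, and such polynomials are dense in $H^{r+2s}_s(a,b)$ by Remark~\ref{gegen_aprox_Hs} while being contained in $C^2(a,b)\cap C^1[a,b]$; hence $\Lambda$ is \emph{the} unique continuous extension of $(-\Delta)^s_\w$ originally defined by~\eqref{eq:weighted_fractional}.

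For bijectivity I would exhibit the inverse by the same diagonalization device. Since $\lambda_j^s>0$ for every $j\geq 0$, the map
\[
G f \;:=\; \sum_{j=0}^\infty \frac{f_j}{\lambda_j^s}\,\tilde{C}^{(s+1/2)}_j
\]
is well defined on $H^r_s(a,b)$, and the lower eigenvalue bound gives $\|G f\|_{H^{r+2s}_s}\leq c_1^{-1}\|f\|_{H^r_s}$. The compositions $\Lambda\circ G$ and $G\circ \Lambda$ both act as the identity on each Gegenbauer mode, so $G$ is a bounded two-sided inverse, establishing both bijectivity and bicontinuity.

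The only genuine obstacle is checking that the operator defined abstractly by the diagonal prescription truly extends the concrete integro-differential operator of~\eqref{eq:weighted_fractional}. This is precisely what Corollary~\ref{diag_ab} supplies: it identifies the normalized Gegenbauer polynomials as honest eigenfunctions of the operator originally defined on $C^2(a,b)\cap C^1[a,b]$, so density of polynomials in $H^{r+2s}_s$ combined with continuity closes the argument; everything else amounts to $\ell^2$-bookkeeping controlled by the asymptotic $\lambda_j^s\sim j^{2s}$ of Remark~\ref{eigenvalue_asymptotics}.
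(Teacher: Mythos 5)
Your proof is correct and follows essentially the same route as the paper: both define the extension by its action on Gegenbauer coefficients using the diagonalization of Corollary~\ref{diag_ab} and then use the asymptotics $\lambda_j^s\sim j^{2s}$ from Remark~\ref{eigenvalue_asymptotics} to pass between the $H^{r+2s}_s$ and $H^r_s$ norms. The paper is terser (it merely remarks that bijectivity, bicontinuity, and uniqueness ``follow easily'' from the eigenvalue asymptotics), whereas you spell out the two-sided eigenvalue bounds and exhibit the inverse operator explicitly, but the underlying argument is identical.
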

\begin{proof} Without loss of generality, we assume $(a,b)=(-1,1)$.
  Let $\phi\in H^{r+2s}_s(-1,1)$, and let $\phi^n = \sum_{j=0}^n
  \phi_j \tilde{C}_j^{(s+1/2)} $ where $\phi_j$ denotes the Gegenbauer
  coefficient of $\phi$ as given by equation~\eqref{gegen_coef} with
  $v=\phi$.  According to Corollary~\ref{diag_ab}  we have
  $(-\Delta)^s_\w \phi^n = \sum_{j=0}^n \lambda_j^s \phi_j
  \tilde{C}_j^{(s+1/2)}$. In view of Remarks~\ref{gegen_aprox_Hs}
  and~\ref{eigenvalue_asymptotics} it is clear that $(-\Delta)^s_\w
  \phi^n$ is a Cauchy sequence (and thus a convergent sequence) in
  $H_s^{r}(-1,1)$. We may thus define 
$$(-\Delta)^s_\w \phi = \lim_{n\to\infty}
(-\Delta)^s_\w \phi^n = \sum_{j=0}^\infty\lambda_j^s \phi_j
\tilde{C}_j^{(s+1/2)}\in H_s^{r}(-1,1).$$ 
The bijectivity and bicontinuity of the
extended mapping follows easily, in view of Remark~\ref{eigenvalue_asymptotics}, 
as does the uniqueness of continuous extension. The proof is complete.
\end{proof}
\begin{corollary} \label{coro_u_sobolev} The solution $u$
  of~\eqref{eq:fraccionario_dirichlet} with right-hand side $f\in
  H^r_s(a,b)$ ($r\geq 0$) can be expressed in the form $u=\w^s\phi$ for
  some $\phi \in H^{r+2s}_s(a,b)$.
\end{corollary}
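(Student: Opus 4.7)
The plan is to read off Corollary~\ref{coro_u_sobolev} as a direct translation of Theorem~\ref{teo_extended} through the correspondence in Remark~\ref{rem_connection_u}. Given $f \in H^r_s(a,b)$ with $r \ge 0$, Theorem~\ref{teo_extended} produces a unique $\phi \in H^{r+2s}_s(a,b)$ with $(-\Delta)^s_\w \phi = f$. I would simply set $u = \w^s \phi$ extended by zero outside $(a,b)$; the decomposition asserted in the corollary is then built into the construction, and the only genuine content is to verify that this $u$ solves the Dirichlet problem~\eqref{eq:fraccionario_dirichlet}.

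For that verification I would proceed by density, using Remark~\ref{gegen_aprox_Hs} to approximate $\phi$ in $H^{r+2s}_s(a,b)$ by its partial Gegenbauer sums $\phi^n = \sum_{j=0}^n \phi_j \tilde C_j^{(s+1/2)}$, each of which is a polynomial. After the affine rescaling that sends $(0,1)$ to $(a,b)$, Corollary~\ref{cor_poly_w} applies termwise to every $\w^s(x)(x-a)^k$, so by linearity the approximants $u_n := \w^s \phi^n$ (extended by zero) satisfy $(-\Delta)^s u_n(x) = (-\Delta)^s_\w \phi^n(x)$ pointwise for $x \in (a,b)$, in the classical sense of~\eqref{frac1d}. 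The bicontinuity in Theorem~\ref{teo_extended} then gives $(-\Delta)^s_\w \phi^n \to f$ in $H^r_s(a,b)$, while $\phi^n \to \phi$ in $L^2_s(a,b)$ forces $u_n \to u$ in $L^2(a,b)$ (since $\w^s$ is bounded on $(a,b)$). Passing to the limit in the identity $(-\Delta)^s u_n = (-\Delta)^s_\w \phi^n$ identifies $u$ with the solution of~\eqref{eq:fraccionario_dirichlet}, with the boundary condition $u = 0$ on $\W^c$ automatic by construction.

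The main obstacle I foresee is making the final limit rigorous, since neither the pointwise P.V. expression~\eqref{frac1d} nor the hypersingular form~\eqref{eq:hypersingular} is directly defined for a generic $\phi \in H^{r+2s}_s$. I would resolve this by interpreting the PDE distributionally on $\Omega$: test $(-\Delta)^s u_n$ against any $\psi \in C^\infty_c(\Omega)$, transfer the operator onto $\psi$ via the self-adjointness of the weighted operator established in Lemma~\ref{teo:autoadj} together with the integration-by-parts identities of Section~\ref{integraleq}, and then let $n \to \infty$ using the two convergences above. Because both sides depend continuously on their respective $H^r_s$/$L^2$ norms, the limiting relation $\int_\Omega u\,(-\Delta)^s\psi = \int_\Omega f\psi$ holds for every test function $\psi$, which is the sought-after weak formulation of~\eqref{eq:fraccionario_dirichlet}. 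Uniqueness of such weak solutions then ensures that the $u$ produced here coincides with the Dirichlet-problem solution, completing the proof.
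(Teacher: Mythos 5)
Your proposal is correct and follows essentially the same route as the paper: the paper's own proof is the one-liner ``Follows from Theorem~\ref{teo_extended} and Remark~\ref{rem_connection_u},'' and your argument is exactly that composition, with the additional density/weak-formulation elaboration simply making explicit the ``Clearly'' implicit in Remark~\ref{rem_connection_u}.
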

\begin{proof}
  Follows from Theorem~\ref{teo_extended} and
  Remark~\ref{rem_connection_u}.
\end{proof}

The classical smoothness of solutions of
equation~\eqref{eq:fraccionario_dirichlet} for sufficiently smooth
right-hand sides results from the following version of the ``Sobolev
embedding'' theorem.
\begin{theorem}[Sobolev's Lemma for weighted
  spaces]\label{gegen_embedding_Hr}
  Let $s \ge 0$, $k\in \mathbb{N}\cup \{0\}$ and $r > 2k + s +
  1$. Then we have a continuous embedding $H^r_s(a,b)\subset C^k[a,b]$
  of $H^r_s(a,b)$ into the Banach space $C^k[a,b]$ of $k$-continuously
  differentiable functions in $[a,b]$ with the usual norm $\| v \|_k$
  (given by the sum of the $L^\infty$ norms of the function and the
  $k$-th derivative): $\| v \|_k = \| v \|_\infty + \| v^{(k)}
  \|_\infty$.
\end{theorem}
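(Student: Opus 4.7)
The plan is to establish the embedding by controlling the $C^k$-norm of $v\in H^r_s(a,b)$ directly via its Gegenbauer expansion, using uniform bounds on the Gegenbauer polynomials together with a Cauchy--Schwarz argument in $\ell^2$. Without loss of generality I work on $(a,b)=(-1,1)$. The starting point is the classical pointwise bound $|C^{(\lambda)}_j(x)|\le C^{(\lambda)}_j(1)=\Gamma(j+2\lambda)/(\Gamma(2\lambda)\Gamma(j+1))$, valid for $\lambda>0$ and all $x\in[-1,1]$, whose right-hand side is $O(j^{2\lambda-1})$. Combined with the asymptotics $h^{(s+1/2)}_j\sim j^{s-1/2}$ from equation~\eqref{gegen_norm_est}, this yields
\begin{equation*}
\bigl\|\tilde C^{(s+1/2)}_j\bigr\|_\infty \le C\,(1+j)^{s+1/2}\quad\text{and, more generally,}\quad \bigl\|\tilde C^{(s+k+1/2)}_m\bigr\|_\infty \le C\,(1+m)^{s+k+1/2}.
\end{equation*}

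Next I would treat the case $k=0$. For $v\in H^r_s(-1,1)$ with Gegenbauer coefficients $v_j$, Cauchy--Schwarz gives
\begin{equation*}
\sum_{j=0}^\infty |v_j|\,\bigl\|\tilde C^{(s+1/2)}_j\bigr\|_\infty \;\le\; \|v\|_{H^r_s}\,\biggl(\sum_{j=0}^\infty (1+j^2)^{-r}\bigl\|\tilde C^{(s+1/2)}_j\bigr\|_\infty^2\biggr)^{1/2},
\end{equation*}
and the second factor is finite whenever $2r-(2s+1)>1$, i.e., whenever $r>s+1$, which is the $k=0$ hypothesis. Hence the Gegenbauer series converges absolutely and uniformly on $[-1,1]$, so its sum (which coincides with $v$ in $L^2_s$) is continuous and $\|v\|_\infty\le C\|v\|_{H^r_s}$.

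For $k\ge 1$ the plan is to apply the same bound to the formal termwise $k$-th derivative series given in~\eqref{k-der}--\eqref{k-der-coeffs}, whose $m$-th coefficient is $A^k_{m+k}v_{m+k,s}$. By Lemma~\ref{A_jk_lemma}, $|A^k_{m+k}|=1/|B^k_{m+k}|\le C(m+k)^k$, so invoking the pointwise bound with parameter $s+k$ and Cauchy--Schwarz again,
\begin{equation*}
\sum_{m=0}^\infty |A^k_{m+k}v_{m+k,s}|\,\bigl\|\tilde C^{(s+k+1/2)}_m\bigr\|_\infty \le C\,\|v\|_{H^r_s}\,\biggl(\sum_{j\ge k}(1+j^2)^{-r}\,j^{4k+2s+1}\biggr)^{1/2},
\end{equation*}
and the latter sum converges precisely when $2r-(4k+2s+1)>1$, that is $r>2k+s+1$, which is the hypothesis. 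Thus the termwise derivative series also converges absolutely and uniformly on $[-1,1]$.

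To conclude, I would apply the standard result that if a series of $C^\infty$ functions converges uniformly on a compact interval, and each of its termwise derivative series of order $1,\ldots,k$ converges uniformly, then the sum is of class $C^k$ and its derivatives are obtained termwise. Applying this to the partial sums $v^n=\sum_{j\le n}v_j\tilde C^{(s+1/2)}_j$ (each a polynomial) shows $v\in C^k[-1,1]$ with $v^{(k)}$ given by~\eqref{k-der}, and gives the estimate $\|v\|_k\le C\|v\|_{H^r_s}$, proving the continuous embedding. The main obstacle is the bookkeeping for $k\ge 1$: one must verify that the differentiated series in~\eqref{k-der} really does represent the classical derivative of a general $v\in H^r_s$ (rather than merely of a piecewise-smooth $v$ as in Lemma~\ref{lemma_gegen_parts}). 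This is resolved by the uniform convergence of the partial-sum derivatives just established, which bypasses any a priori smoothness requirement on $v$ and identifies the limit with $v^{(k)}$ directly.
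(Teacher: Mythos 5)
Your proof is correct and follows essentially the same strategy as the paper: bound the uniform norm of the normalized Gegenbauer polynomials via Szeg\H o-type estimates, control the derivative coefficients (you route through $A^k_j = 1/B^k_j$ from Lemma~\ref{A_jk_lemma}, while the paper works directly with the Gegenbauer derivative formula and the ratio of normalization constants $h^{(s+\ell+1/2)}_{j-\ell}/h^{(s+1/2)}_j$, but the exponent $j^{s+2\ell+1/2}$ comes out the same), apply Cauchy--Schwarz against the $(1+j^2)^r$ weight, and conclude via uniform convergence of the differentiated partial sums. The two arguments are the same modulo bookkeeping.
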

\begin{proof} Without loss of generality we restrict attention to
  $(a,b)=(-1,1)$.  Let $0\le \ell \le k$ and let $v\in H^r_s(-1,1)$ be
  given. Using the expansion~\eqref{exp_gegen} and in view of the
  relation~\eqref{gegen_poly_der} for the derivative of a Gegenbauer polynomial, we
  consider the partial sums
  \begin{equation}\label{exp_gegen_der}
  v^{(\ell)}_n(x) = 2^\ell  \prod_{i=1}^\ell (s + i - 1/2)
  \sum_{j=\ell}^n \frac{v_j}{h_j^{(s+1/2)}} C^{(s+\ell+1/2)}_{j-\ell}(x)
  \end{equation}
  that result as the partial sums corresponding to~\eqref{exp_gegen} up to $j=n$ are differentiated $\ell$ times.
  But we have the estimate
  \begin{equation}\label{bound_gegen}
  \|C^{(s+1/2)}_{n}\|_\infty \sim O(n^{2s}).
  \end{equation}
  which is an immediate consequence of~\cite[Theorem 7.33.1]{szego}. Thus, taking into account~\eqref{gegen_norm_est}, we obtain 
  $$ |v_n^{(\ell)}(x)| \le C(\ell) \sum_{j=0}^{n-\ell} \frac{| v_{j+\ell}|} {h_{j+\ell}^{(s+1/2)}} |C^{(s+\ell+1/2)}_{j}(x)| 
  \le C(\ell) \sum_{j=0}^{n-\ell} (1+j^2)^{(s+2\ell)/2 + 1/4} | v_{j+\ell}| ,$$
for some constant $C(\ell)$. Multiplying and dividing by $(1+j^2)^{r/2}$ and
applying the Cauchy-Schwartz inequality in the space of square
summable sequences it follows that
\begin{equation}\label{g_der}
| v_n^{(\ell)}(x) | \le C(\ell) \left(\sum_{j=0}^{n-\ell} \frac{1}{(1+j^2)^{r - (s+2\ell+1/2) }}\right)^{1/2} \left(\sum_{j=0}^{n-\ell} (1+j^2)^r |v_{j+\ell}|^2\right)^{1/2}. 
\end{equation}
We thus see that, provided $r - (s + 2\ell+1/2)>1/2$ (or equivalently,
$r>2\ell+s+1$), $v_n^{(\ell)}$ converges uniformly to
$\frac{\partial^\ell}{\partial x^\ell} v(x)$ (cf. ~\cite[Th. 7.17]{baby_rudin}) for all 
$\ell$ with
$0\leq \ell \leq k$. It follows that $v\in C^k[-1,1]$, and, in view
of~\eqref{g_der}, it is easily checked that there exists a constant
$M(\ell)$ such that $\| \frac{\partial^{(\ell)}}{\partial x^k} v(x)
\|_\infty \le M(\ell) \| v \|_s^r$ for all $0\leq \ell \leq k.$ The
proof is complete.
\end{proof}

\begin{remark}
In order to check that the previous result is sharp we consider an example in the case $k=0$: 
the function $v(x)=|\log(x)|^{\beta}$ with $0<\beta<1/2$ is not bounded, but a straightforward computation 
shows that, for $s \in  \N$, $v\in \mathcal{H}_s^{s+1}(0,1)$, or equivalently (see Lemma \ref{sobolev_equivalence}),
$v\in H_s^{s+1}(0,1)$.
\end{remark}

\begin{corollary}\label{cinfty}
  The weighted fractional Laplacian
  operator~\eqref{eq:weighted_fractional} maps bijectively the
  space $C^\infty[a,b]$ into itself.
\end{corollary}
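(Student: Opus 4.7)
The plan is to deduce the corollary from Theorem~\ref{teo_extended} together with the Sobolev embedding of Theorem~\ref{gegen_embedding_Hr}, using as the key intermediate observation the identity
\[
C^\infty[a,b] \;=\; \bigcap_{r\ge 0} H_s^r(a,b).
\]

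First I would verify this identity. For the inclusion $C^\infty[a,b]\subset \bigcap_r H_s^r(a,b)$, I would invoke Corollary~\ref{gegen_decay_coro}: if $v\in C^\infty[a,b]$, then for every $k\in\mathbb{N}$ its Gegenbauer coefficients satisfy $|v_{j,s}|\le C_k j^{-k}$, so $\sum_j (1+j^2)^r |v_j|^2 < \infty$ for every $r\ge 0$, i.e. $v\in H_s^r(a,b)$. (Alternatively, one could use Lemma~\ref{sobolev_equivalence}, since every $v\in C^\infty[a,b]$ is trivially in $\mathcal{H}^k_s(a,b)$ for all $k$.) For the reverse inclusion $\bigcap_r H_s^r(a,b)\subset C^\infty[a,b]$, I would apply Theorem~\ref{gegen_embedding_Hr}: given $v\in \bigcap_r H_s^r(a,b)$ and any $k\in\mathbb{N}\cup\{0\}$, choosing any $r>2k+s+1$ yields $v\in C^k[a,b]$; since $k$ is arbitrary, $v\in C^\infty[a,b]$.

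Next I would exploit Theorem~\ref{teo_extended}, which provides, for every $r\ge 0$, a unique continuous bijective extension $(-\Delta)^s_\w: H_s^{r+2s}(a,b)\to H_s^r(a,b)$. The uniqueness of continuous extension guarantees that these extensions are mutually consistent: the extension defined on $H_s^{r'+2s}$ with $r'>r$ agrees with the restriction of the extension on $H_s^{r+2s}$ to the smaller space, since both extend the original operator on, say, polynomials (which are dense in every $H_s^r$ by Remark~\ref{gegen_aprox_Hs}). Hence there is a single well-defined operator on $\bigcap_r H_s^{r+2s}$.

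Given $\phi\in C^\infty[a,b]=\bigcap_r H_s^{r+2s}(a,b)$, the consistency just noted shows that $(-\Delta)^s_\w\phi\in H_s^r(a,b)$ for every $r\ge 0$, hence $(-\Delta)^s_\w\phi\in \bigcap_r H_s^r(a,b)=C^\infty[a,b]$, giving the mapping property. For surjectivity, given $f\in C^\infty[a,b]$, apply Theorem~\ref{teo_extended} at each $r\ge 0$ to obtain $\phi_r\in H_s^{r+2s}(a,b)$ with $(-\Delta)^s_\w\phi_r=f$; bijectivity of each extension plus consistency forces all $\phi_r$ to coincide with a single $\phi\in\bigcap_r H_s^{r+2s}(a,b)=C^\infty[a,b]$. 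Injectivity on $C^\infty[a,b]$ is inherited from the injectivity of the extension at any single value of $r$.

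The only delicate point—really the only possible obstacle—is the consistency of the family of extensions on nested spaces; once this is noted (by density of polynomials and uniqueness of continuous extension), the corollary follows immediately by intersecting over $r$.
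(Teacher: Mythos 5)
Your argument is correct and follows essentially the same route as the paper: the paper's one-line proof cites Theorem~\ref{teo_extended}, Lemmas~\ref{lemma_gegen_parts} and~\ref{A_jk_lemma} (which are precisely what underlies Corollary~\ref{gegen_decay_coro}), and Theorem~\ref{gegen_embedding_Hr}—i.e., exactly the ingredients you assemble to show $C^\infty[a,b]=\bigcap_{r\ge 0}H^r_s(a,b)$ and then intersect the bijections over $r$. Your careful note on consistency of the extensions is a reasonable elaboration, though it is immediate here since the extension in Theorem~\ref{teo_extended} is given by one explicit Gegenbauer-series formula independent of $r$.
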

\begin{proof}
 Follows directly from Theorem~\ref{teo_extended} together 
 with lemmas~\ref{lemma_gegen_parts},~\ref{A_jk_lemma} and~\ref{gegen_embedding_Hr}.
\end{proof}
\subsection{Analytic Regularity, single interval case}
\label{single_interval_analytic}
Let $f$ denote an analytic function defined in the closed interval
$[-1,1]$. Our analytic regularity results for the solution of
equation~\eqref{eq:fraccionario_dirichlet} relies on consideration of
analytic extensions of the function $f$ to relevant neighborhoods of
the interval $[-1,1]$ in the complex plane. We thus consider the {\em
  Bernstein ellipse} $\mathcal{E}_\rho$, that is, the ellipse with
foci $\pm 1$ whose minor and major semiaxial lengths add up to
$\rho\geq 1$. We also consider the closed set $\mathcal{B}_\rho$ in
the complex plane which is bounded by $\mathcal{E}_\rho$ (and which
includes $\mathcal{E}_\rho$, of course). Clearly, any analytic
function $f$ over the interval $[-1,1]$ can be extended analytically
to $\mathcal{B}_\rho$ for some $\rho > 1$. We thus consider the
following set of analytic functions.
\begin{definition}\label{A_rho_def}
  For each $\rho >1$ let $A_\rho$ denote the normed space of analytic
  functions $A_\rho = \{ f \colon f \text{ is analytic on }
  \mathcal{B}_\rho\} $ endowed with the $L^\infty$ norm
  $\|\cdot\|_{L^\infty\left (\mathcal{B}_{\rho}\right)}$.
\end{definition}
\begin{theorem}
\label{teo_analyticity}
For each $f \in A_\rho$ we have $((-\Delta)^s_\w)^{-1} f \in
A_\rho$. Further, the mapping $((-\Delta)^s_\w)^{-1}: A_\rho \to
A_\rho$ is continuous.
\end{theorem}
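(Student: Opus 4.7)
The plan is to exploit the diagonal form of $(-\Delta)^s_\w$ from Corollary~\ref{diag_ab} together with classical Bernstein-type estimates for Gegenbauer expansions of analytic functions. Since the eigenvalues $\lambda_j^s$ grow only polynomially (like $j^{2s}$, by Remark~\ref{eigenvalue_asymptotics}), whereas analyticity of $f$ forces its Gegenbauer coefficients to decay geometrically, division by $\lambda_j^s$ preserves the geometric decay and hence the analytic character of $\phi = ((-\Delta)^s_\w)^{-1} f$.

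First I would establish a Bernstein-type coefficient estimate: since $f\in A_\rho$ is analytic on the closed ellipse $\mathcal{B}_\rho$, there exists $\rho_1>\rho$ such that $f$ is analytic on $\mathcal{B}_{\rho_1}$, and the normalized coefficients $f_j=\int_{-1}^1 f(x)\,\tilde C^{(s+1/2)}_j(x)(1-x^2)^s\,dx$ admit a bound of the form $|f_j|\le C\,j^{\alpha}\,\rho_1^{-j}$ for some $\alpha=\alpha(s)$. The standard route is to insert Cauchy's representation of $f$ along $\mathcal{E}_{\rho_1}$ into the coefficient integral, using that the Gegenbauer polynomial kernel admits the Szegő-type bound
\begin{equation*}
\bigl\|C^{(s+1/2)}_j\bigr\|_{L^\infty(\mathcal{B}_r)} \le C\,j^{2s}\,r^{j}\qquad (r>1),
\end{equation*}
which, after dividing by $h_j^{(s+1/2)}\sim j^{s-1/2}$, gives the desired estimate for the normalized polynomials and, dually, for the coefficients.

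Second, by Corollary~\ref{diag_ab}, the operator inversion reduces to division on coefficients: the candidate solution is
\begin{equation*}
\phi(x) \;=\; \sum_{j=0}^\infty \frac{f_j}{\lambda_j^s}\,\tilde C^{(s+1/2)}_j(x).
\end{equation*}
Using $\lambda_j^s\sim j^{2s}$ and the coefficient bound above, $|f_j/\lambda_j^s|\le C'\,j^{\alpha'}\,\rho_1^{-j}$. Combining with the polynomial-weighted geometric growth of $\tilde C^{(s+1/2)}_j$ on $\mathcal{B}_r$ ($r>1$), the series is dominated term-by-term by $\sum_j j^{\gamma}(r/\rho_1)^j$, which converges whenever $r<\rho_1$. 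Choosing any $r$ with $\rho\le r<\rho_1$ yields uniform convergence on $\mathcal{B}_\rho$; since the partial sums are polynomials, their uniform limit $\phi$ is analytic on $\mathcal{B}_\rho$, i.e.\ $\phi\in A_\rho$. Uniqueness of $\phi$ follows from the bijectivity of $(-\Delta)^s_\w$ already established in Theorem~\ref{teo_extended} (which ensures this series solution agrees with $((-\Delta)^s_\w)^{-1}f$ on $(-1,1)$ and hence everywhere by analytic continuation).

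Continuity of the inverse is then the quantitative version of the same argument: the constant $C$ in the coefficient estimate can be taken as a fixed multiple of $\|f\|_{L^\infty(\mathcal{B}_{\rho_1})}$, and a standard nesting argument (replacing $\rho_1$ by any fixed $\rho_1\in(\rho,\rho+\delta)$) bounds $\|\phi\|_{L^\infty(\mathcal{B}_\rho)}$ linearly in $\|f\|_{L^\infty(\mathcal{B}_\rho)}$. The main technical obstacle is bookkeeping: one must keep careful track of the polynomial-in-$j$ prefactors coming from (i) the normalization constants $h_j^{(s+1/2)}$, (ii) the growth $\lambda_j^s\sim j^{2s}$, and (iii) the Szegő-type bounds on $C_j^{(s+1/2)}$ on Bernstein ellipses, to certify that the ellipse of analyticity of $\phi$ does not shrink below the original $\rho$. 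The small gap afforded by $\rho_1>\rho$ (an automatic consequence of $f$ being analytic on the \emph{closed} set $\mathcal{B}_\rho$) is what absorbs all such polynomial factors.
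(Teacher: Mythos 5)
Your proposal follows essentially the same route as the paper's proof: use the Gegenbauer diagonalization from Corollary~\ref{diag_ab}, bound the coefficients $f_j$ by a geometric decay $\rho_1^{-j}$ (times a polynomial factor) via analyticity in a strictly larger Bernstein ellipse, bound the growth of the normalized Gegenbauer polynomials on $\mathcal{B}_r$ by $r^{j}$ (times a polynomial factor), note that the eigenvalues $\lambda_j^s\sim j^{2s}$ only contribute polynomially, and conclude uniform (hence analytic) convergence on an ellipse strictly between the two, with the continuity estimate falling out of the same term-by-term bound. The only cosmetic difference is that you sketch the coefficient bound via Cauchy's formula while the paper cites an explicit Jacobi-coefficient estimate from the literature; the structure and the way the small gap $\rho_1>\rho$ absorbs the polynomial prefactors are identical.
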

\begin{proof}
  Let $f \in A_\rho$ and let us consider the Gegenbauer expansions
\begin{equation}\label{expansions}
  f=\sum_{j=0}^\infty f_j \tilde{C}_j^{(s+1/2)} \quad\mbox{and}\quad ((-\Delta)^s_\w)^{-1} f=\sum_{j=0}^\infty (\lambda_j^s)^{-1}  f_j \tilde{C}_j^{(s+1/2)}.
\end{equation}
In order to show that $((-\Delta)^s_\w)^{-1} f\in A_\rho$ it suffices
to show that the right-hand series in this equation converges
uniformly within $\mathcal{B}_{\rho_1}$ for some $\rho_1 > \rho$. To
do this we utilize bounds on both the Gegenbauer coefficients and the
Gegenbauer polynomials themselves.

In order to obtain suitable coefficient bounds, we note that, since $f
\in A_\rho$, there indeed exists $\rho_2 > \rho$ such that $f \in
A_{\rho_2}$. It follows~\cite{ZhaoWangXie} that the Gegenbauer
coefficients decay exponentially. More precisely, for a certain
constant $C$ we have the estimate
\begin{equation} \label{eq:cota_an} |f_j| \le C
  \max_{z\in\mathcal{B}_{\rho_2}} |f(z)| \rho_2^{-j}
  j^{-s}\quad\mbox{for some}\quad \rho_2>\rho,
\end{equation} 
which follows directly from corresponding bounds~\cite[eqns 2.28, 2.8,
1.1, 2.27]{ZhaoWangXie} on Jacobi coefficients.  (Here we have used
the relation
$$ C_j^{(s+1/2)} = r^s_j P_j^{(s,s)} \quad \mbox{with} \quad r^s_j = \frac{(2s+1)_j}{(s+1)_j} = O(j^{s}) $$
that expresses Gegenbauer polynomials $C_j^{(s+1/2)}$ in terms of
Jacobi polynomials $P_j^{(s,s)}$.)

In order to the adequately account for the growth of the Gegenbauer
polynomials, on the other hand, we consider the estimate
\begin{equation}
\label{eq:crecimiento_gegenbauer}
\frac{\| C_j^{(s+1/2)} \|_{L^\infty(\mathcal{B}_{\rho_1})}}{h_j^{(s+1/2)}} \le 
D \rho_1^j\quad\mbox{for all}\quad \rho_1>1,
\end{equation}
which follows directly from~\cite[Theorem 3.2]{XieWangZhao} and
equation~\eqref{gegen_norm_est}, where $D=D(\rho_1)$ is a constant
which depends on $\rho_1$.

Let now $\rho_1\in [\rho,\rho_2)$. In view of~\eqref{eq:cota_an}
and~\eqref{eq:crecimiento_gegenbauer} we see that the $j$-th term of
the right-hand series in equation~\eqref{expansions} satisfies
\begin{equation}\label{series_est}
\left | \frac{\lambda_j^s  f_j C_j^{(s+1/2)}(x)}{h_j^{(s+1/2)}}\right |\leq C D(\rho_1)
\left( \frac{\rho_1}{\rho_2} \right)^j j^{-s} (\lambda_j^s)^{-1}  \max_{z\in\mathcal{B}_{\rho_1}} |f(z)|
\end{equation} 
throughout $\mathcal{B}_{\rho_1}$.  Taking $\rho_1\in (\rho,\rho_2)$
we conclude that the series converges uniformly in
$\mathcal{B}_{\rho_1}$, and that the limit is therefore analytic
throughout $\mathcal{B}_{\rho}$, as desired. Finally, taking
$\rho_1=\rho$ in~\eqref{series_est} we obtain the estimates
\begin{equation*}
  \| ((-\Delta)^s_\w)^{-1} f \|_{L^\infty(\mathcal{B}_{\rho})} \le C D(\rho) \sum_{j=0}^\infty \left( \frac{\rho}{\rho_2} \right)^j j^{-s} (\lambda_j^s)^{-1}  \max_{z\in\mathcal{E}_{\rho}} |f(z)| 
  \le E \|f \|_{L^\infty(\mathcal{B}_{\rho})}
\end{equation*} 
which establish the stated continuity condition. The proof is thus
complete.
\end{proof}

\begin{corollary}
  Let $f \in A_\rho$. Then the solution $u$ of
  \eqref{eq:fraccionario_dirichlet} can be expressed in the form
  $u=\w^s\phi$ for a certain $\phi \in A_\rho$.
\end{corollary}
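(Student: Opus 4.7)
The plan is to combine Theorem~\ref{teo_analyticity} with Remark~\ref{rem_connection_u} directly. Given $f\in A_\rho$, Theorem~\ref{teo_analyticity} provides a function $\phi := ((-\Delta)^s_\w)^{-1} f \in A_\rho$, which by construction satisfies the weighted equation $(-\Delta)^s_\w \phi = f$ on the interval $(a,b)$. Since $\phi$ is analytic on the closed Bernstein region $\mathcal{B}_\rho\supset [a,b]$, it is in particular smooth up to the boundary, so that the hypotheses required in the definition~\eqref{eq:weighted_fractional} of $(-\Delta)^s_\w$ are satisfied.

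Next, I would appeal to Remark~\ref{rem_connection_u}, which tells us that the function $\w^s\phi$, extended by zero outside $(a,b)$, is a solution of the Dirichlet problem~\eqref{eq:fraccionario_dirichlet} with right-hand side $f$. Invoking uniqueness of the Dirichlet solution (as guaranteed by the bijectivity established in Theorem~\ref{teo_extended} applied at, say, $r=0$, in view of the embedding $A_\rho \subset H^{2s}_s(a,b)$), we conclude $u=\w^s\phi$ with $\phi \in A_\rho$, as claimed. No step in this argument poses a real obstacle; the entire content of the corollary is packaged in the earlier theorem, and the role of the proof is merely to translate the statement about $\phi$ back to a statement about $u$ via the weighted reformulation of the Dirichlet problem.
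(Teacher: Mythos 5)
Your argument is correct and matches the paper's proof, which simply cites Theorem~\ref{teo_analyticity} together with Remark~\ref{rem_connection_u}. The additional remark about uniqueness of the Dirichlet solution is a reasonable elaboration but not something the paper spells out.
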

\begin{proof}
  Follows from Theorem~\ref{teo_analyticity} and
  Remark~\ref{rem_connection_u}.
\end{proof}

\begin{remark}\label{remark_2s}
  We can now see that, as indicated in Remark~\ref{remark_idea_2s},
  the smoothness and analyticity theory presented throughout
  Section~\ref{regularity} cannot be duplicated with weights of
  exponent $2s$, in spite of the ``local'' regularity result of
  Theorem~\ref{teo1}---that establishes analyticity of
  $T[y^{\alpha}](x)$ around $x=0$ for both cases, $\alpha = s+n$ and
  $\alpha = 2s+n$.  Indeed, we can easily verify that
  $T(y^{2s}(1-y)^{2s} y^n)$ cannot be extended analytically to an open
  set containing $[0,1]$. If it could, Theorem~\ref{teo_analyticity}
  would imply that $y^{s}(1-y)^{s}$ is an analytic function around
  $y=0$ and $y=1$. 
\end{remark}

\subsection{Sobolev and Analytic Regularity on Multi-interval
  Domains}\label{regularity_multi_int}
This section concerns multi-interval domains $\W$ of the
form~\eqref{union_intervals}. Using the characteristic functions
$\chi_{(a_i,b_i)}$ of the individual component interval, letting
$\w^s(x)=\sum_{i=1}^M(x-a_i)^s(b_i-x)^s \chi_{(a_i,b_i)}(x)$ and
relying on Corollary~\ref{coro_lemma_hypersingular}, we define the
multi-interval weighted fractional Laplacian operator on $\Omega$ by
$(-\Delta)^s_\w \phi = (-\Delta)^s[\w^s \phi]$, where $\phi: \mathbb{R} \to
\mathbb{R}$. 
In view of the various
results in previous sections it is natural to use the decomposition
$(-\Delta)^s_\w = \mathcal{K}_s + \mathcal{R}_s$, where $\mathcal{K}_s[\phi] =
\sum_{i=1}^M\chi_{(a_i,b_i)} K_s\chi_{(a_i,b_i)}\phi$ is a
block-diagonal operator and where $\mathcal{R}_s$ is the associated off-diagonal
remainder. Using integration by parts is easy to check that
\begin{equation}\label{other_intervals}
  \mathcal{R}_s\phi(x) = C_1(s) \int_{\W\setminus (a_j,b_j)} |x-y|^{-1-2s} \w^s(y) \phi(y) dy\quad\mbox{for}\quad  x\in (a_j,b_j).
\end{equation}

\begin{theorem}
  Let $\Omega$ be given as in
  Definition~\ref{union_intervals_def}. Then, given $f \in L^2_s(\W)$,
  there exists a unique $\phi \in L^2_s(\W)$ such that
  $ (-\Delta)^s_\w \phi = f$.  Moreover, for $f \in H^r_s(\W)$
  (resp. $f \in A_\rho(\W)$) we have $\phi \in H^{r+2s}_s(\W)$
  (resp. $\phi \in A_\nu(\W)$ for some $\nu >1$).
\end{theorem}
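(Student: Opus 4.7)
The plan is to exploit the decomposition $(-\Delta)^s_\w = \mathcal{K}_s + \mathcal{R}_s$ indicated before the statement, combined with the fact that the off-diagonal remainder $\mathcal{R}_s$ has an \emph{analytic} kernel. Indeed, since the intervals $(a_i,b_i)$ have mutually disjoint closures, the function $(x,y)\mapsto |x-y|^{-1-2s}\w^s(y)$ appearing in~\eqref{other_intervals} is smooth (in fact jointly real-analytic) on $(a_j,b_j)\times (\W\setminus(a_j,b_j))$ for each $j$. Hence $\mathcal{R}_s$ is a Hilbert--Schmidt operator on $L^2_s(\W)$ and, more importantly, it is a \emph{smoothing} operator: it maps $L^2_s(\W)$ into $C^\infty(\overline\W)$, and even into a space of functions admitting analytic extension to a complex neighborhood of each $[a_j,b_j]$.

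First I would treat existence and uniqueness on $L^2_s(\W)$. By Corollary~\ref{diag_ab}, $\mathcal{K}_s$ restricted to each interval is the Gegenbauer-diagonal operator with positive eigenvalues $\lambda_n^s$ bounded below, so $\mathcal{K}_s:L^2_s(\W)\to L^2_s(\W)$ is a continuous bijection. Writing $(-\Delta)^s_\w=\mathcal{K}_s(I+\mathcal{K}_s^{-1}\mathcal{R}_s)$, the operator $\mathcal{K}_s^{-1}\mathcal{R}_s$ is compact on $L^2_s(\W)$, so the Fredholm alternative reduces the problem to injectivity. Injectivity follows from the standard variational theory for the Dirichlet problem~\eqref{eq:fraccionario_dirichlet}: if $(-\Delta)^s_\w\phi=0$, then $u=\w^s\phi$ extended by zero solves $(-\Delta)^s u=0$ with trivial exterior data, hence $u\equiv 0$ (by the positivity/coercivity of the energy form on $\widetilde H^s(\W)$), and thus $\phi=0$ a.e.

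For the Sobolev regularity statement I would proceed by bootstrap. Suppose $f\in H^r_s(\W)$ and $\phi\in L^2_s(\W)$ solves $(-\Delta)^s_\w\phi=f$. Since $\mathcal{R}_s\phi$ restricted to each $(a_j,b_j)$ extends smoothly to $[a_j,b_j]$ (its kernel is $C^\infty$ in $x$ with $y$ ranging over a compact set separated from the singularity), we have $\mathcal{R}_s\phi\in H^t_s(\W)$ for every $t\ge 0$. Therefore $\mathcal{K}_s\phi = f-\mathcal{R}_s\phi\in H^r_s(\W)$, and applying Theorem~\ref{teo_extended} componentwise on each subinterval yields $\phi\in H^{r+2s}_s(\W)$.

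For the analytic case, given $f\in A_\rho(\W)$, I would apply the same bootstrap to upgrade analyticity. The key observation is that for $\phi\in L^2_s(\W)$, the function $\mathcal{R}_s\phi|_{(a_j,b_j)}$ extends analytically to a complex neighborhood $U_j\supset[a_j,b_j]$ determined only by the geometric separation between the intervals (not by $\phi$). Choosing $\nu>1$ small enough that the Bernstein ellipse $\mathcal{B}_\nu$ attached to each $(a_j,b_j)$ lies inside both $U_j$ and the original $\mathcal{B}_\rho$, we conclude $f-\mathcal{R}_s\phi\in A_\nu(\W)$, and Theorem~\ref{teo_analyticity} applied interval by interval gives $\phi\in A_\nu(\W)$. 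The main subtlety I anticipate is a clean justification of the injectivity step and the careful bookkeeping in choosing $\nu$ so that the analytic extension of $\mathcal{R}_s\phi$ and the Bernstein ellipses for the single-interval analytic theory are compatible on \emph{every} component of $\W$ simultaneously.
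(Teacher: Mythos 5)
Your proposal follows the same overall architecture as the paper: the same decomposition $(-\Delta)^s_\w = \mathcal{K}_s + \mathcal{R}_s$, the same reduction to the second-kind equation $(I + \mathcal{K}_s^{-1}\mathcal{R}_s)\phi = \mathcal{K}_s^{-1}f$ with $\mathcal{K}_s^{-1}\mathcal{R}_s$ compact, the Fredholm alternative, and the same bootstrap $\mathcal{K}_s\phi = f - \mathcal{R}_s\phi$ to transfer the single-interval regularity results (Theorems~\ref{teo_extended} and \ref{teo_analyticity}) to the multi-interval setting. That part is sound.

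The genuine gap is exactly the place you flagged as ``the main subtlety'': the injectivity step.  You go from ``$\phi\in L^2_s(\W)$ solves $(-\Delta)^s_\w\phi=0$'' directly to ``$u=\w^s\phi$ solves $(-\Delta)^s u=0$ with trivial exterior data, hence $u\equiv 0$ by coercivity on $\widetilde H^s(\W)$''. But for a general $\phi\in L^2_s(\W)$ there is no reason that $u = \w^s\phi$ lies in $\widetilde H^s(\W)$ (or in $H^s(\mathbb R)$ at all), and the variational uniqueness statement you invoke is a statement about that space only; without membership in it, the weak formulation does not apply and coercivity gives you nothing.  The paper fills this hole in two moves.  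First, it observes that for a homogeneous solution $\mathcal{K}_s\phi = -\mathcal{R}_s\phi$ with $\mathcal{R}_s\phi$ analytic, so Theorem~\ref{teo_analyticity} forces $\phi$ to be analytic (hence in particular bounded and $C^1$ up to each endpoint).  Second, it then verifies explicitly --- via the Fourier transform of $\w^s$, which involves a confluent hypergeometric function, together with the algebra property of $H^s\cap L^\infty$ --- that for such $\phi$ the product $u=\w^s\phi$ does belong to $H^s(\W)$, after which the uniqueness result of \cite{AcostaBorthagaray} applies.  Both of these substeps are nontrivial and need to appear explicitly; your write-up skips them.

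A minor slip, unrelated to the gap above: you assert that because the eigenvalues $\lambda_n^s$ are bounded below, ``$\mathcal{K}_s:L^2_s(\W)\to L^2_s(\W)$ is a continuous bijection.''  In fact $\lambda_n^s \sim n^{2s}\to\infty$, so $\mathcal{K}_s$ is \emph{unbounded} on $L^2_s$; what you actually need (and what the paper uses) is that $\mathcal{K}_s^{-1}$ is bounded --- indeed compact --- on $L^2_s(\W)$.  This does not break the rest of your argument, since you only ever apply $\mathcal{K}_s^{-1}$, but the stated reasoning should be corrected.
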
 
\begin{proof}
  Since $(-\Delta)^s_\w=(\mathcal{K}_s+\mathcal{R}_s)$,
  left-multiplying the equation $ (-\Delta)^s_\w \phi = f$ by
  $\mathcal{K}_s^{-1}$ yields
\begin{equation}
\label{eq:Preconditioner}
\left( I + \mathcal{K}_s^{-1}\mathcal{R}_s \right) \phi = \mathcal{K}_s^{-1} f .
\end{equation}
The operator $\mathcal{K}_s^{-1}$ is clearly compact in $L^2_s(\W)$
since the eigenvalues $\lambda_j^s$ tend to infinity as $j\to\infty$
(cf. \eqref{Eigenvalues}).  On the other hand, the kernel of the
operator $\mathcal{R}_s$ is analytic, and therefore $\mathcal{R}_s$ is
continuous (and, indeed, also compact) in $L^2_s(\W)$.  It follows
that the operator $\mathcal{K}_s^{-1}\mathcal{R}_s$ is compact in
$L^2_s(\W)$, and, thus, the Fredholm alternative tells us that
equation~\eqref{eq:Preconditioner} is uniquely solvable in $L^2_s(\W)$
provided the left-hand side operator is injective.

To establish the injectivity of this operator, assume $\phi \in L^2_s$
solves the homogeneous problem. Then
$\mathcal{K}_s(\phi) = - \mathcal{R}_s(\phi)$, and since
$\mathcal{R}_s(\phi)$ is an analytic function of $x$, in view of the
mapping properties established in Theorem~\ref{teo_analyticity} for
the self operator $K_s$ (which coincides with the
single-interval version of the operator $(-\Delta)^s_\w)$), we
conclude the solution $\phi$ to this problem is again analytic. Thus,
a solution to~\eqref{eq:fraccionario_dirichlet} for a null right-hand
side $f$ can be expressed in the form be $u = \w^s\phi$ for a certain
function $\phi$ which is analytic throughout $\Omega$.  But this
implies that the function $u = \w^s\phi$ belongs to the classical
Sobolev space $H^s(\W)$.  (To check this fact we consider that
(a)~$\w^s\in H^s(\W)$, since, by definition, the Fourier transform of
$\w^s$ coincides (up to a constant factor) with the confluent
hypergeometric function $M(s+1,2s+2,\xi)$ whose
asymptotics~\cite[eq. 13.5.1]{AbramowitzStegun} show that $\w^s$ in
fact belongs to the classical Sobolev space $H^{s+1/2-\eps}(\W)$ for
all $\eps>0$; and (b)~the product $fg$ of functions $f$, $g$ in
$H^s(\W)\cap L^\infty(\W)$ is necessarily an element of $H^s(\W)$---as
the Aronszajn-Gagliardo-Slobodeckij semi-norm~\cite{Hitchhikers} of
$fg$ can easily be shown to be finite for such functions $f$ and $g$,
which implies $fg \in H^s(\W)$~\cite[Prop 3.4]{Hitchhikers}).  Having
established that $u = \w^s\phi\in H^s(\W)$, the injectivity of the
operator in~\eqref{eq:Preconditioner} in $L^2_s(\W)$ follows from the
uniqueness of $H^s$ solutions, which is established for example
in~\cite{AcostaBorthagaray}. As indicated above, this injectivity
result suffices to establish the claimed existence of an $L^2_s(\W)$
solution for each $L^2_s(\W)$ right-hand side.

Assuming $f$ is analytic (resp. belongs to $H^r_s(\W)$), finally, the
regularity claims now follow directly from the single-interval results
of Sections~\ref{Sobolev} and~\ref{single_interval_analytic}, since a
solution $\phi$ of $(-\Delta)^s_\w \phi = f$ satisfies
\begin{equation}\label{multi_single}
\mathcal{K}_s(\phi) = f - \mathcal{R}_s(\phi).
\end{equation}
The proof is now complete.
\end{proof}


\section{High Order Numerical Methods\label{HONM}}
This section presents rapidly-convergent numerical methods for single-
and multi-interval fractional Laplacian problems.  In particular, this
section establishes that the proposed methods, which are based on the
theoretical framework introduced above in this paper, converge
(i)~exponentially fast for analytic right-hand sides $f$,
(ii)~superalgebraically fast for smooth $f$, and (iii)~with
convergence order $r$ for $f \in H_s^r(\W)$.

\subsection{Single-Interval Method: Gegenbauer Expansions\label{num_single}}
In view of Corollary~\ref{diag_ab}, a spectrally accurate algorithm
for solution of the single-interval equation~\eqref{eqn_weighted} (and
thus equation~\eqref{eq:fraccionario_dirichlet} for $\Omega=(a,b)$)
can be obtained from use of Gauss-Jacobi quadratures. Assuming
$(a,b)=(-1,1)$ for notational simplicity, the method proceeds as
follows: 1)~The continuous scalar product~\eqref{gegen_coef} with
$v=f$ is approximated with spectral accuracy (and, in fact, exactly
whenever $f$ is a polynomial of degree less or equal to $n+1$) by
means of the discrete inner product
\begin{equation}
\label{eq:disc_inner}
f_j^{(n)}:= \frac{1}{h_j^{(s+1/2)}} \sum_{i=0}^n f(x_i) C^{(s+1/2)}_j(x_i)  w_i,  
\end{equation}
where $(x_i)_{i=0}^n$ and $(w_i)_{i=0}^n$ denote the nodes and weights
of the Gauss-Jacobi quadrature rule of order $2n+1$. (As is well
known~\cite{HaleTowsend}, these quadrature nodes and weights can be
computed with full accuracy at a cost of $O(n)$ operations.)  2)~For
each $i$, the necessary values $C^{(s+1/2)}_j(x_i)$ can be obtained
for all $j$ via the three-term recurrence
relation~\eqref{eq:recurrencia}, at an overall cost of $O(n^2)$
operations. The algorithm is then completed by 3)~Explicit evaluation
of the spectrally accurate approximation
\begin{equation}
\label{eq:geg_rec}
\phi_n := K_{s,n}^{-1} f = \sum_{j=0}^n \frac{f_j^{(n)}}{\lambda_j^s h_j^{(s+1/2)}} C^{(s+1/2)}_j
\end{equation}
that results by using the expansion~\eqref{exp_gegen} with $v=f$
followed by an application of equation~\eqref{eigenfuncts} and
subsequent truncation of the resulting series up to $j=n$. The
algorithm requires accurate evaluation of certain ratios of Gamma
functions of large arguments; see equations~\eqref{Eigenvalues}
and~\eqref{eq:norm_gegen}, for which we use the Stirling's series as
in~\cite[Sec 3.3.1]{HaleTowsend}. The overall cost of the algorithm is
$O(n^2)$ operations. The accuracy of this algorithm, in turn, is
studied in section~\ref{error_estimates}.

\subsection{Multiple Intervals: An iterative Nystr\"om Method}\label{num_multi}
This section pre\-sents a spectrally accurate iterative Nystr\"om method
for the numerical solution of
equation~\eqref{eq:fraccionario_dirichlet} with $\Omega$ as
in~\eqref{union_intervals}.  This solver, which is based on use of the
equivalent second-kind Fredholm equation~\eqref{eq:Preconditioner},
requires (a)~Numerical approximation of $\mathcal{K}_s^{-1}f$,
(b)~Numerical evaluation of the ``forward-map''
$(I+\mathcal{K}_s^{-1}\mathcal{R}_s)[\tilde \phi]$ for each given
function $\tilde \phi$, and (c)~Use of the iterative linear-algebra
solver GMRES~\cite{GMRES}. Clearly, the algorithm in
Section~\ref{num_single} provides a numerical method for the
evaluation of each block in the block-diagonal inverse operator
$\mathcal{K}_s^{-1}$. Thus, in order to evaluate the aforementioned
forward map it now suffices to evaluate numerically the off-diagonal
operator $\mathcal{R}_s$ in equation~\eqref{other_intervals}.

An algorithm for evaluation of $\mathcal{R}_s[\tilde \phi](x)$ for $x\in
(a_j,b_j)$ can be constructed on the basis of the Gauss-Jacobi
quadrature rule for integration over the interval $(a_\ell,b_\ell)$
with $\ell\ne j$, in a manner entirely analogous to that described in
Section~\ref{num_single}. Thus, using Gauss-Jacobi nodes and weights
$y_i^{(\ell)}$ and $w_i^{(\ell)}$ ($i = 1,\dots, n_\ell$) for each
interval $(a_\ell,b_\ell)$ with $\ell\ne j$ we may construct a
discrete operator $R_n$ that can be used to approximate $\mathcal{R}_s[\tilde
\phi](x)$ for each given function $\tilde\phi$ and for all observation
points $x$ in the set of Gauss-Jacobi nodes used for integration in
the interval $(a_j,b_j)$ (or, in other words, for $x = y_k^{(j)}$ with
$k=1,\dots,n_j$).  Indeed, consideration of the numerical
approximation
$$ R[\tilde\phi](y_k^{(j)}) \approx \sum_{\ell \ne j} \sum_{i=0}^{n_\ell} |y_k^{(j)}-y_i^{(\ell)}|^{-2s-1} \tilde\phi(y_i^{(\ell)}) w_i^{(\ell)} $$
suggests the following definition. Using a suitable ordering to define
a vector $Y$ that contains all unknowns corresponding to
$\tilde\phi(y_i^{(\ell)})$, and, similarly, a vector $F$ that contains all
of the values $f(y_i^{(\ell)})$, the discrete equation to be solved
takes the form
\begin{equation*}
\label{eq:disc_precond}
(I + K_{s,n}^{-1} R_{s,n}) Y = K_{s,n}^{-1} [F]
\end{equation*}
where $R_n$ and $K_{s,n}^{-1}$ are the discrete operator that
incorporate the aforementioned ordering and quadrature rules.

With the forward map $(I + K_{s,n}^{-1} R_{s,n})$ in hand, the multi-interval
algorithm is completed by means of an application of a suitable
iterative linear algebra solver; our implementations are based on the
Krylov-subspace iterative solver GMRES~\cite{GMRES}.  Thus, the overall
cost of the algorithm is $O(m\cdot n^2)$ operations, where $m$ is the
number of required iterations. (Note that the use of an iterative
solver allows us to avoid the actual construction and inversion of the
matrices associated with the discrete operators in
equation~\eqref{eq:disc_precond}, which would lead to an overall cost of
the order of $O(n^3)$ operations.) As the equation to be solved
originates from a second kind equation, it is not unreasonable to
anticipate that, as we have observed without exception (and as
illustrated in Section~\ref{num_res}), a small number of GMRES
iterations suffices to meet a given error tolerance.

\subsection{Error estimates}\label{error_estimates}
The convergence rates of the algorithms proposed in
Sections~\ref{num_single} and~\ref{num_multi} are studied in what
follows. In particular, as shown in Theorems~\ref{teo_sobolev_error}
and~\ref{teo_analytic_error}, the algorithm's errors are exponentially
small for analytic $f$, they decay superalgebraically fast (faster
than any power of meshsize) for infinitely smooth right-hand sides,
and with a fixed algebraic order of accuracy $O(n^{-r})$ whenever $f$
belongs to the Sobolev space $H^r_s(\W)$ (cf. Section~\ref{Sobolev}).
For conciseness, fully-detailed proofs are presented in the
single-interval case only. A sketch of the proofs for the
multi-interval cases is presented in
Corollary~\ref{multi_interval_error}.

\begin{theorem}\label{teo_sobolev_error}
  Let $r > 0$, $0<s<1$. Then, there exists a constant $D$ such that
  the error $e_n(f) = (K_s^{-1} - K_{s,n}^{-1})(f)$ in the numerical
  approximation~\eqref{eq:geg_rec} for the solution of the single
  interval problem~\eqref{eqn_weighted} satisfies
\begin{equation}\label{singleinterval_sobolev_error}
  \| e_n(f) \|_{H^{\ell+2s}_s(a,b)} \le D n^{\ell-r} \| f \|_{H^r_s(a,b)}
\end{equation}  
for all $f \in H^r_s(a,b)$.  In particular, the $L^2_s$-bound
\begin{equation}\label{singleinterval_sobolev_L2}
 \| e_n(f) \|_{L^2_s(a,b)} \le D n^{-r} \| f \|_{H^r_s(a,b)}.
\end{equation} 
holds for every $f \in H^r_s(a,b)$.

\end{theorem}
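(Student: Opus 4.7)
The plan is to reformulate the numerical scheme~\eqref{eq:geg_rec} as the exact inverse $K_s^{-1}$ applied to the Gauss--Jacobi polynomial interpolant of $f$, thereby reducing the error bound to a classical polynomial-interpolation estimate. The crucial observation is the discrete orthonormality relation
\[
\sum_{i=0}^n \tilde C_j^{(s+1/2)}(x_i)\tilde C_k^{(s+1/2)}(x_i)w_i = \delta_{jk},\qquad 0\le j,k\le n,
\]
which holds because the $(n+1)$-node Gauss--Jacobi rule integrates $\mathbb{P}_{2n+1}$ exactly. It follows that the polynomial $\mathcal{I}_n f := \sum_{j=0}^n f_j^{(n)} \tilde C_j^{(s+1/2)}$ of degree $\le n$ reproduces $f$ at the quadrature nodes and is therefore the standard polynomial interpolant. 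Since $K_s^{-1}$ is diagonal in the Gegenbauer basis by Corollary~\ref{diag_ab}, the numerical solution equals $K_s^{-1}(\mathcal{I}_n f)$, and consequently
\[
e_n(f) = K_s^{-1}\bigl(f - \mathcal{I}_n f\bigr).
\]

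Invoking Theorem~\ref{teo_extended} with index $\ell$ produces the continuity bound $\| e_n(f) \|_{H^{\ell+2s}_s} \le D_0 \| f - \mathcal{I}_n f \|_{H^\ell_s}$, so that for $0 \le \ell \le r$ the theorem reduces to the Gauss--Jacobi interpolation estimate
\begin{equation*}
\| f - \mathcal{I}_n f \|_{H^\ell_s(a,b)} \;\le\; D_1\, n^{\ell-r}\,\|f\|_{H^r_s(a,b)}.
\end{equation*}
To prove this, I would expand $f - \mathcal{I}_n f$ in Gegenbauer polynomials and use the Parseval-type identity~\eqref{H_r_norm} to split the squared norm into the unaliased tail $\sum_{j>n}(1+j^2)^\ell |f_j|^2$ and the aliasing contribution $\sum_{j=0}^n (1+j^2)^\ell |f_j - f_j^{(n)}|^2$. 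The tail is immediate: it is bounded by $(1+n^2)^{\ell-r}\|f\|_{H^r_s}^2$. For the aliasing term, substituting the Gegenbauer series of $f$ into~\eqref{eq:disc_inner} and applying discrete orthogonality on all pairs $(j,k)$ with $j+k\le 2n+1$ yields the aliasing identity $f_j - f_j^{(n)} = -\sum_{k\ge 2n+2-j}\alpha_{j,k}^{(n)} f_k$ with coefficients $\alpha_{j,k}^{(n)} := \sum_{i=0}^n \tilde C_j^{(s+1/2)}(x_i)\tilde C_k^{(s+1/2)}(x_i) w_i$.

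The main obstacle is the control of the aliasing contribution. One clean route is to invoke directly the Gauss--Jacobi interpolation theorem of~\cite{BabuskaGuo}, since Lemma~\ref{sobolev_equivalence} identifies $H^\ell_s$ with the Jacobi-weighted space $\mathcal{H}^\ell_s$ used there. A self-contained alternative would establish a uniform bound $|\alpha_{j,k}^{(n)}|\le C$ for $j\le n$ and $k\ge n+1$ via Cauchy--Schwarz in the discrete inner product, combined with Christoffel-function estimates for the Gauss--Jacobi weights $w_i$ and the classical bound $(1-x^2)^{s/2+1/4} C_k^{(s+1/2)}(x) = O(k^{-1/2})$ from~\cite[Th.~7.33.1]{szego}; a further Cauchy--Schwarz step in weighted $\ell^2$ together with careful bookkeeping of the shifted summation range $k\ge 2n+2-j$ would then yield $\sum_{j=0}^n (1+j^2)^\ell |f_j - f_j^{(n)}|^2 \lesssim n^{2(\ell-r)}\|f\|_{H^r_s}^2$. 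Combining the tail and aliasing bounds completes the proof of~\eqref{singleinterval_sobolev_error}, and the $L^2_s$ bound~\eqref{singleinterval_sobolev_L2} follows by specializing to $\ell=0$.
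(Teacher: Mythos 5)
Your proposal follows essentially the same route as the paper's proof: identify $K_{s,n}^{-1}f$ with $K_s^{-1}$ applied to the degree-$n$ Gauss--Jacobi interpolant of $f$, use the bicontinuity of $K_s^{-1}$ from Theorem~\ref{teo_extended} to reduce the operator error to the interpolation error in $H^\ell_s$, and invoke a known weighted interpolation estimate together with the norm equivalence of Lemma~\ref{sobolev_equivalence}. The only cosmetic difference is that the paper takes the interpolation bound from~\cite[Theorem~4.2]{GuoWang} rather than from~\cite{BabuskaGuo}; your sketched self-contained aliasing argument is not needed once the citation route is available.
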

\begin{proof} As before, we work with $(a,b)= (-1,1)$.
  Let $f$ be given and let $p_n$ denote the $n$-degree polynomial that
  interpolates $f$ at the Gauss-Gegenbauer nodes $(x_i)_{0\le i \le
    n}$. Since the Gauss-Gegenbauer quadrature is exact for
  polynomials of degree less or equal than $2n+1$, the approximate
  Gegenbauer coefficient $f_j^{(n)}$ (equation~\eqref{eq:disc_inner})
  coincides with the corresponding exact Gegenbauer coefficient of
  $p_n$: using the scalar product~\eqref{scalarproduct_L2} we have
  $$ f_j^{(n)} = \sum_{i=0}^n p_n(x_i) \tilde{C}_j^{(s+1/2)}(x_i)w_i = 
  \langle p_n, \tilde{C}_j^{(s+1/2)} \rangle_s. $$  It follows that
  the discrete operator $K_{s,n}$ satisfies $K_{s,n}^{-1}f =
  K_s^{-1}p_n$. Therefore, for each $\ell \ge 0$ we have
\begin{equation}\label{err_interpolatory}
  \| e_n(f) \|_{H^{\ell+2s}_s(-1,1)} = \| K_s^{-1}( f - p_n )\|_{H^{\ell+2s}_s(-1,1)} \le D_2 \| f - p_n \|_{H^{\ell}_s(-1,1)} ,
\end{equation}
where $D_2$ denotes the continuity modulus of the operator $K_s^{-1}:
H^{\ell}_s(-1,1) \to H^{\ell+2s}_s(-1,1)$ (see
Theorem~\ref{teo_extended} and
equation~\eqref{eq:weighted_fractional}).  From~\cite[Theorem
4.2]{GuoWang} together with the norm equivalence established in
Lemma~\ref{sobolev_equivalence}, we have, for all $\ell \le r$, the
following estimate for the interpolation error of a function $f \in
H^{r}_s(-1,1)$:
\begin{equation}\label{sobolev_interp_error}
\| f - p_n \|_{H_s^\ell(-1,1)} < C n^{\ell-r} \| f  \|_{H_s^r(-1,1)} \mbox{ for } f \in H_s^r(-1,1),
\end{equation}
which together with~\eqref{err_interpolatory} shows that~\eqref{singleinterval_sobolev_error} holds. The proof is complete.
\end{proof}
\begin{remark}\label{remark_negative_norm}
  A variety of numerical results in Section~\ref{num_res} suggest that
  the estimate~\eqref{singleinterval_sobolev_error} is of optimal
  order, and that the estimate~\eqref{singleinterval_sobolev_L2} is
  suboptimal by a factor that does not exceed $n^{-1/2}$.  In view of
  equation~\eqref{err_interpolatory}, devising optimal error estimates
  in the $L^2_s(a,b)$ norm is equivalent to that of finding optimal
  estimates for the interpolation error in the space $H_s^{-2s}(a,b)$.
  Such negative-norm estimates are well known in the context of
  Galerkin discretizations (see e.g.~\cite{BrennerScott}); the
  generalization of such results to the present context is left for
  future work.
\end{remark}

\begin{theorem}\label{teo_analytic_error}
  Let $f \in A_\rho$ be given (Definition~\ref{A_rho_def}) and let
  $e_n(f) = (K_s^{-1} - K_{s,n}^{-1})(f)$ denote the single-interval
  $n$-point error arising from the numerical method presented in
  Section~\ref{num_single}. Then the error estimate
\begin{equation}\label{singleinterval_analytic_error}
  \| e_n(f) \|_{A_\nu} \le C n^s \left(\frac{\nu}{\rho}\right)^n \| f \|_{A_\rho}, \quad \mbox{for all $\nu$ such that}\quad 1< \nu < \rho
\end{equation} 
holds. In particular, the operators $K_{s,n}^{-1}: A_\rho \to A_\rho$
converge in norm to the continuous operators $K_s^{-1}$ as
$n\to\infty$.
\end{theorem}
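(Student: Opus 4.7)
The plan is to follow the scheme of Theorem~\ref{teo_sobolev_error}, but to exploit the exponential coefficient decay that is available in the analytic setting. As in the proof of that theorem, the exactness of the Gauss--Gegenbauer quadrature on polynomials of degree at most $2n+1$ guarantees that the discrete coefficient $f_j^{(n)}$ coincides with the exact Gegenbauer coefficient of the Lagrange interpolant $p_n$ of $f$ at the Gauss--Gegenbauer nodes. Hence $K_{s,n}^{-1} f = K_s^{-1} p_n$, and
$$ e_n(f) = K_s^{-1}(f - p_n). $$
Applying Theorem~\ref{teo_analyticity} with the Bernstein ellipse $\mathcal{B}_\nu$, the continuity of $K_s^{-1}$ on $A_\nu$ yields
$$ \| e_n(f) \|_{A_\nu} \le C_\nu \, \| f - p_n \|_{A_\nu}, $$
so the task reduces to an interpolation-error bound in the $A_\nu$ norm.

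To estimate $\|f-p_n\|_{A_\nu}$, I would expand in Gegenbauer series and split
$$ f - p_n = \sum_{j>n} f_j\, \tilde{C}^{(s+1/2)}_j \; + \; \sum_{j=0}^n \bigl(f_j - f_j^{(n)}\bigr)\, \tilde{C}^{(s+1/2)}_j. $$
For the truncation tail, combining the coefficient decay $|f_j| \le C \|f\|_{A_\rho}\, \rho^{-j} j^{-s}$ from~\eqref{eq:cota_an} with the growth estimate $\|C_j^{(s+1/2)}\|_{L^\infty(\mathcal{B}_\nu)} / h_j^{(s+1/2)} \le D(\nu)\, \nu^j$ from~\eqref{eq:crecimiento_gegenbauer} produces a geometric series of ratio $\nu/\rho < 1$, whose remainder past index $n$ is of order $(n{+}1)^{-s}(\nu/\rho)^{n+1}\|f\|_{A_\rho}$. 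For the aliasing sum, the discrete orthogonality of the Gegenbauer polynomials at the Gauss nodes allows one to express $f_j - f_j^{(n)}$ as a weighted combination of high-frequency coefficients $f_k$ with $k>n$; inserting the exponential decay of those $f_k$ and bounding the low-frequency polynomials by~\eqref{eq:crecimiento_gegenbauer} yields a contribution of the same order. The loose prefactor $n^s$ in~\eqref{singleinterval_analytic_error} comfortably absorbs both contributions.

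The norm-convergence assertion at the end of the statement follows by passing $\nu \to \rho^-$ using a slightly larger reference domain $\rho_2 > \rho$ in the coefficient decay~\eqref{eq:cota_an}, mirroring the $\rho_2 > \rho$ device already employed in the proof of Theorem~\ref{teo_analyticity}: the same splitting then produces a bound of the form $C n^s (\rho/\rho_2)^n \|f\|_{A_{\rho_2}}$ whose right-hand factor tends to zero, uniformly on bounded subsets of $A_{\rho_2}$.

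The principal obstacle I anticipate is the rigorous aliasing bound: while the Chebyshev case admits an elementary closed form, controlling the Gauss--Gegenbauer aliasing coefficients uniformly in $j$, $k$ and $n$ requires careful bookkeeping of the discrete orthogonality relations together with the asymptotics of $h_j^{(s+1/2)}$ recorded in~\eqref{gegen_norm_est}. A clean alternative would be to invoke, or quickly establish, a Bernstein-ellipse analogue $\|f-p_n\|_{L^\infty(\mathcal{B}_\nu)} \le C n^{\alpha}(\nu/\rho)^n \|f\|_{A_\rho}$ of the classical Chebyshev interpolation estimate, which would collapse the second step above to a single quotation and leave only the continuity of $K_s^{-1}$ to be invoked.
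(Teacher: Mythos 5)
Your overall skeleton matches the paper's: after the exactness of Gauss--Gegenbauer quadrature on $\mathbb{P}_{2n+1}$ gives $K_{s,n}^{-1}f = K_s^{-1}p_n$, you expand the error in Gegenbauer modes and split it into a truncation tail (indices $j>n$) and an aliasing part (indices $j\le n$), which is precisely the decomposition in~\eqref{eq:separa_f}. The tail estimate via~\eqref{eq:cota_an} and~\eqref{eq:crecimiento_gegenbauer} is exactly what the paper does, and the way you let $\nu$ approach $\rho$ via a larger reference ellipse $\rho_2>\rho$ to get norm convergence is also the paper's device.

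The genuine difference is how the quantities $|f_j - f_j^{(n)}|$ are controlled. You correctly flag this as the ``principal obstacle'' and propose to attack it by bookkeeping the Gauss--Gegenbauer aliasing coefficients via discrete orthogonality. The paper avoids this entirely: it invokes the published $(1-x^2)^s$-weighted Gauss quadrature error estimate on Bernstein ellipses (displayed as~\eqref{eq:quadrature}, quoted from~\cite[Theorem 3.2]{ZhaoWangXie}) applied directly to $v = f\,\tilde{C}_j^{(s+1/2)}$, and then uses~\eqref{eq:crecimiento_gegenbauer} to bound $\|v\|_{L^\infty(\mathcal{B}_\rho)} \le D\rho^j \|f\|_{L^\infty(\mathcal{B}_\rho)}$, giving $|f_j - f_j^{(n)}| \le CDn^s \rho^{-n}\|f\|_{L^\infty(\mathcal{B}_\rho)}$ for every $j\le n$ in one line. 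This sidesteps both the detour through $p_n$ and the aliasing computation: there is no need to express $f_j-f_j^{(n)}$ as a combination of high-frequency coefficients, because the quadrature error bound already supplies the required exponential smallness directly. Your aliasing plan is plausible and could be pushed through using the discrete Gegenbauer orthogonality, but it would amount to re-proving, and in a more awkward form, the content of~\eqref{eq:quadrature}. The ``clean alternative'' you mention (a Bernstein-ellipse interpolation estimate for Gauss--Gegenbauer nodes) would require essentially the same quadrature error estimate as its main ingredient, so it does not actually shorten the argument relative to the paper's direct route. Invoking~\eqref{eq:quadrature} at the step where you write ``controlling the Gauss--Gegenbauer aliasing coefficients\dots requires careful bookkeeping'' would close your gap and reduce your proposal to the paper's proof.
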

\begin{proof}
  Equations~\eqref{eq:weighted_fractional},
  \eqref{expansions},~\eqref{eq:disc_inner} and~\eqref{eq:geg_rec}
  tell us that
\begin{equation}\label{eq:separa_f}
(K_s^{-1} - K_{s,n}^{-1}) f = 
\sum_{j=0}^n \left(f_j-f_j^{(n)} \right) (\lambda_j^s)^{-1}  \tilde{C}_j^{(s+1/2)} +
\sum_{j=n+1}^\infty f_j (\lambda_j^s)^{-1}  \tilde{C}_j^{(s+1/2)}.
\end{equation}
In order to obtain a bound for the quantities $|f_j-f_j^{(n)}|$ we
utilize the estimate
\begin{equation}
\left| \int_{-1}^1 v(x) (1-x^2)^s dx - \sum_{i=0}^n v(x_i) w_i \right| \le 
\frac{C n^s}{\rho^{2n}} \|v\|_{L^\infty(\mathcal{B}_\rho)}.
\label{eq:quadrature}
\end{equation}
that is provided in~\cite[Theorem 3.2]{ZhaoWangXie} for the
Gauss-Gegenbauer quadrature error for a function $v \in
\mathcal{A}_\rho$. Letting $v = f \, \tilde{C}_j^{(s+1/2)}$ with $j\le n$,
equation~\eqref{eq:quadrature} and \eqref{eq:crecimiento_gegenbauer} yield
\begin{equation} 
\label{eq:quadrature_error_an}
 | f_j-f_j^{(n)}| \le
\frac{CD n^s}{\rho^{n}} \| f \|_{L^\infty(\mathcal{B}_\rho)}. 
\end{equation}
It follows that the infinity norm of the left-hand side in
equation~\eqref{eq:separa_f} satisfies
\begin{equation*}
\| (K_s^{-1} - K_{s,n}^{-1}) f \|_{L^\infty(B_\nu)} \leq C n^s \left( \frac{\nu}{\rho}\right)^{n} \|f\|_{L^\infty(\mathcal{B}_\rho)} \mbox{ for all } \nu < \rho
\label{eq:conv_analiticas}
\end{equation*}
for some (new) constant $C$, as it can be checked by
considering~\eqref{eq:crecimiento_gegenbauer},
\eqref{eq:quadrature_error_an} and Remark~\ref{eigenvalue_asymptotics}
for the finite sum in \eqref{eq:separa_f}, and~\eqref{eq:cota_an}
\eqref{eq:crecimiento_gegenbauer} and
Remark~\ref{eigenvalue_asymptotics} for the tail of the series. The
proof is now complete.

\end{proof}
\begin{corollary}\label{multi_interval_error}
  The algebraic order of convergence established in the
  single-interval Theorem~\ref{teo_sobolev_error} is valid in the
  multi-interval Sobolev case as well. Further, an exponentially small
  error in the infinity norm of $C^0(\Omega)$ results in the analytic
  multi-interval case (cf. Theorem~\ref{teo_analytic_error}).
\end{corollary}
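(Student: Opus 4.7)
The plan is to view the multi-interval scheme as a Nystr\"om discretization of the second-kind Fredholm equation~\eqref{eq:Preconditioner} and to reduce the multi-interval error analysis to the single-interval Theorems~\ref{teo_sobolev_error} and~\ref{teo_analytic_error} combined with the smoothing character of the off-diagonal operator $\mathcal{R}_s$.

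First I would exploit the crucial fact that, since the closures of the intervals $(a_i,b_i)$ are mutually disjoint, the kernel $|x-y|^{-1-2s}$ appearing in the definition~\eqref{other_intervals} of $\mathcal{R}_s$ is jointly analytic in $x\in(a_j,b_j)$ and $y\in(a_\ell,b_\ell)$ whenever $\ell\ne j$. Consequently, for any $\phi\in L^2_s(\W)$ the restriction of $\mathcal{R}_s\phi$ to each component interval is analytic in a Bernstein ellipse about that interval. This furnishes a ``free smoothing'' that will make the coupling contributions to the error exponentially small in $n$ irrespective of the regularity of $f$.

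Next I would write the standard error identity. Let $\phi$ solve $(I+\mathcal{K}_s^{-1}\mathcal{R}_s)\phi=\mathcal{K}_s^{-1}f$ and let $\phi_n$ denote its Nystr\"om counterpart. Writing $\phi_n=(I+K_{s,n}^{-1}R_{s,n})^{-1}K_{s,n}^{-1}f$ and subtracting yields
\begin{equation*}
\phi_n-\phi=(I+K_{s,n}^{-1}R_{s,n})^{-1}\Bigl[(K_{s,n}^{-1}-\mathcal{K}_s^{-1})f-(K_{s,n}^{-1}R_{s,n}-\mathcal{K}_s^{-1}\mathcal{R}_s)\phi\Bigr].
\end{equation*}
The first consistency term I would estimate block-wise by applying~\eqref{singleinterval_sobolev_error} or~\eqref{singleinterval_analytic_error} to each restriction $f|_{(a_i,b_i)}$, which delivers exactly the claimed rate. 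The second term I would split as $K_{s,n}^{-1}(R_{s,n}-\mathcal{R}_s)\phi+(K_{s,n}^{-1}-\mathcal{K}_s^{-1})\mathcal{R}_s\phi$. For the second summand the analytic single-interval estimate~\eqref{singleinterval_analytic_error} applies, since $\mathcal{R}_s\phi$ is analytic on each subinterval by the previous paragraph; the first summand is the Gauss--Jacobi quadrature error on each $(a_\ell,b_\ell)$ for an integrand that, again by the analyticity of the off-diagonal kernel, is analytic in a Bernstein ellipse, so~\eqref{eq:quadrature} renders it exponentially small.

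To convert these consistency estimates into a convergence bound one needs the uniform invertibility of $I+K_{s,n}^{-1}R_{s,n}$ for all sufficiently large $n$. Invertibility of the continuous operator $I+\mathcal{K}_s^{-1}\mathcal{R}_s$ was established in Section~\ref{regularity_multi_int}, and $\mathcal{K}_s^{-1}\mathcal{R}_s$ is compact on $L^2_s(\W)$; since $K_{s,n}^{-1}R_{s,n}$ converges to this compact limit in the sense required by Anselone's collectively-compact operator theory (pointwise convergence on $L^2_s(\W)$, with uniformly bounded norms, as a consequence of the estimates above), the uniform bound $\sup_n\|(I+K_{s,n}^{-1}R_{s,n})^{-1}\|<\infty$ follows. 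Inserting this bound into the error identity produces both statements of the corollary, with the Sobolev rate $O(n^{-r})$ inherited from~\eqref{singleinterval_sobolev_L2} and the exponential rate from~\eqref{singleinterval_analytic_error}; the passage to the $C^0(\W)$ norm in the analytic case is effected through the Sobolev embedding Theorem~\ref{gegen_embedding_Hr}. The main obstacle is the verification of the collectively-compact convergence with enough sharpness to preserve the single-interval rates; the analyticity of the off-diagonal kernel is the essential ingredient here, as it guarantees that the inter-interval coupling never limits the overall convergence rate.
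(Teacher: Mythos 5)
Your approach is essentially the one the paper uses: you recast the scheme as a Nystr\"om discretization of the second-kind equation~\eqref{eq:Preconditioner}, derive the stability/consistency splitting, bound the consistency terms through the single-interval Theorems~\ref{teo_sobolev_error} and~\ref{teo_analytic_error}, and appeal to collectively-compact operator theory for uniform invertibility of $I+K_{s,n}^{-1}R_{s,n}$. The paper reaches the same conclusion by invoking~\cite[Th.~10.12]{Kress} directly, obtaining the bound~\eqref{sobol_est}, which is exactly the stability estimate your explicit error identity produces. Your observation that $\mathcal{R}_s\phi$ is analytic on each component interval (by disjointness of the interval closures) is correct and is the key ingredient in both arguments.

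There is, however, one genuine inaccuracy in your reasoning. You claim that \emph{both} pieces of $K_{s,n}^{-1}R_{s,n}-\mathcal{K}_s^{-1}\mathcal{R}_s$ applied to $\phi$ are exponentially small, and in particular that the first summand $K_{s,n}^{-1}(R_{s,n}-\mathcal{R}_s)\phi$ is an exponentially small quadrature error ``by the analyticity of the off-diagonal kernel.'' That is only true in the analytic case. In the Sobolev case the integrand appearing in $(R_{s,n}-\mathcal{R}_s)\phi$ is $|x-y|^{-1-2s}\,\phi(y)$ against the weight $\w^s(y)$, and while the kernel is analytic in $y$, the factor $\phi(y)$ is only in $H^{r+2s}_s$; so this quadrature error is only algebraic, of order $O(n^{-(r+2s)})$ by~\eqref{sobolev_interp_error}. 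Fortunately this is harmless: since $s>0$, that rate is strictly better than the $O(n^{-r})$ furnished by the dominant consistency term $(K_{s,n}^{-1}-\mathcal{K}_s^{-1})f$, so the overall $O(n^{-r})$ rate survives. You should correct the justification rather than the conclusion. A second, smaller remark: your collectively-compact step is asserted rather than checked. The paper verifies it concretely by combining the compact embedding $H^r_s(\Omega)\subset H^{r'}_s(\Omega)$ for $r'<r$ (Remark~\ref{sobolev_remark}), the smoothness of the off-diagonal kernel, and exactness of the Gauss–Gegenbauer rule for polynomials; you should explicitly carry out something along those lines rather than leaving it as ``a consequence of the estimates above.''
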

\begin{proof}
  It is is easy to check that the family $\{R_{s,n}\}$ ($n\in\mathbb
  N$) of discrete approximations of the off-diagonal operator
  $\mathcal{R}_s$ is collectively compact~\cite{Kress} in the space
  $H^r_s(\Omega)$ ($r>0$). Indeed, it suffices to show that, for a
  given bounded sequence $\{\phi_{n}\}\subset H^r_s(\Omega)$, the
  sequence $R_{s,n}[\phi_{n}]$ admits a convergent subsequence in
  $H^r_s(\Omega)$. But, selecting $0<r' <r$, by
  Remark~\ref{sobolev_remark} we see that $\phi_{n}$ admits a
  convergent subsequence in $H^{r'}_s(\Omega)$. Thus, in view of the
  smoothness of the kernel of the operator $\mathcal{R}_s$, the bounds
  for the interpolation error~\eqref{sobolev_interp_error} applied to
  the product of $\phi_{n}$ and the kernel (and its derivatives), and
  the fact that the Gauss-Gegenbauer quadrature rule is exact for
  polynomials of degree $\leq 2n+1$, $R_{s,n}[\phi_{n}]$ converges in
  $H^t_s(\Omega)$ for all $t\in \mathbb{R}$ and, in particular for
  $t=r$. Thus, the family $\{R_{s,n}\}$ is collectively compact in
  $H^r_s(\Omega)$, as claimed, and therefore so is
  $K_{n,s}^{-1}R_{n,s}$. Then~\cite[Th. 10.12]{Kress} shows that, for
  some constant $C$, we have the bound
\begin{equation}\label{sobol_est}
\|\phi_n -\phi \|_{H^r_s}\leq C \| (\mathcal{K}_s^{-1}\mathcal{R}_s -
K_{n,s}^{-1}R_{n,s}) \phi \|_{H^r_s} + \| \mathcal{K}_s^{-1} -
K_{n,s}^{-1}) f \|.
\end{equation}

The proof in the Sobolev case now follows from~\eqref{sobol_est}
together with equations~\eqref{err_interpolatory}
and~\eqref{sobolev_interp_error} and the error estimates in
Theorem~\ref{teo_sobolev_error}. The proof in the analytic case,
finally, follows from the bound~\eqref{eq:quadrature},
Theorem~\ref{teo_analytic_error} and an application of
Theorem~\ref{gegen_embedding_Hr} to the left-hand side of
equation~\eqref{sobol_est}.
\end{proof}


\section{Numerical Results\label{num_res}}

This section presents a variety of numerical results that illustrate
the properties of algorithms introduced in Section~\ref{HONM}.  The
efficiency of these method is largely independent of the value of the
parameter $s$, and, thus, independent of the sharp boundary layers
that arise for small values of $s$.  To illustrate the efficiency of
the proposed Gegenbauer-based Nystr\"om numerical method and the
sharpness of the error estimates developed in Section~\ref{HONM}, test
cases containing both smooth and non-smooth right hand sides are
considered. In all cases the numerical errors were estimated by
comparison with reference solutions obtained for larger values of
$N$. Additionally, solutions obtained by the present Gegenbauer
approach were checked to agree with those provided by the
finite-element method introduced in~\cite{AcostaBorthagaray}, thereby
providing an independent verification of the correcteness of proposed
methodology.

\begin{center}
 \begin{figure}[ht]
\caption{Exponential convergence for $f(x) = \frac{1}{x^2 + 0.01}$.}
\includegraphics[scale=0.3]{./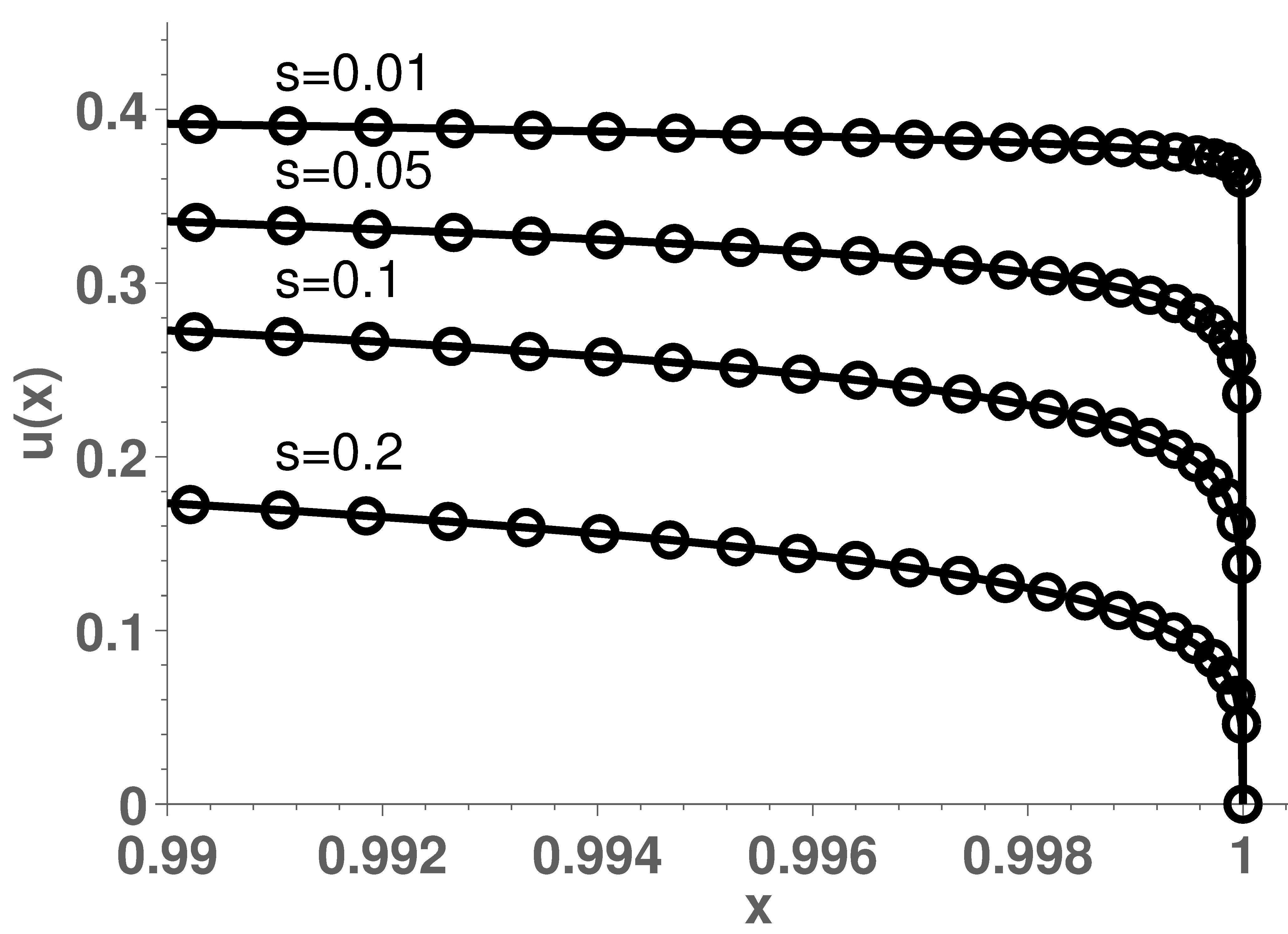}
\includegraphics[scale=0.3]{./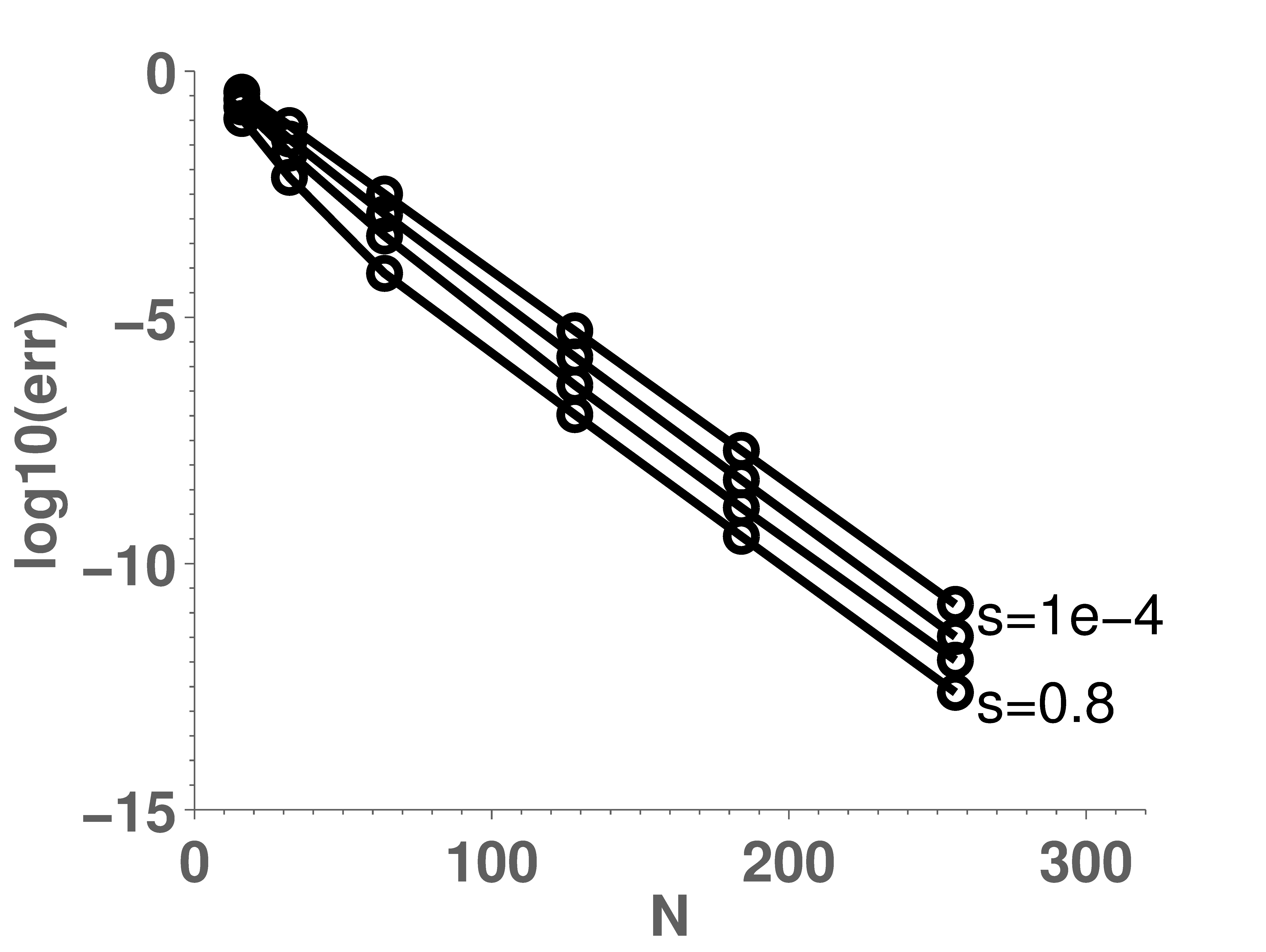} \\
\footnotesize Left: Solution detail near the domain boundary for $f$
equal to the Runge function mentioned in the text.  Right: Convergence
for various values of $s$.  Computation time: $0.0066$ sec. for $N=16$
to $0.05$ sec. for $N=256$.
\label{fig_runge}
\end{figure}
\end{center}

Figure~\ref{fig_runge} demonstrates the exponentially fast convergence
that takes place for a right-hand side given by the Runge function
$f(x) = \frac{1}{x^2 + 0.01}$---which is analytic within a small
region of the complex plane around the interval $[-1,1]$, and for
values of $s$ as small as $10^{-4}$.  The present Matlab
implementation of our algorithms produces these solutions with near
machine precision in computational times not exceeding $0.05$ seconds.
\begin{center}
 \begin{figure}[ht]
   \caption{Convergence in the $H^{2s}_s(-1,1)$ and $L_s^2(-1,1)$
     norms for $f(x) = |x|$. In this case, $f\in
     H^{3/2-\eps}_s(-1,1)$.}
\includegraphics[scale=0.35]{./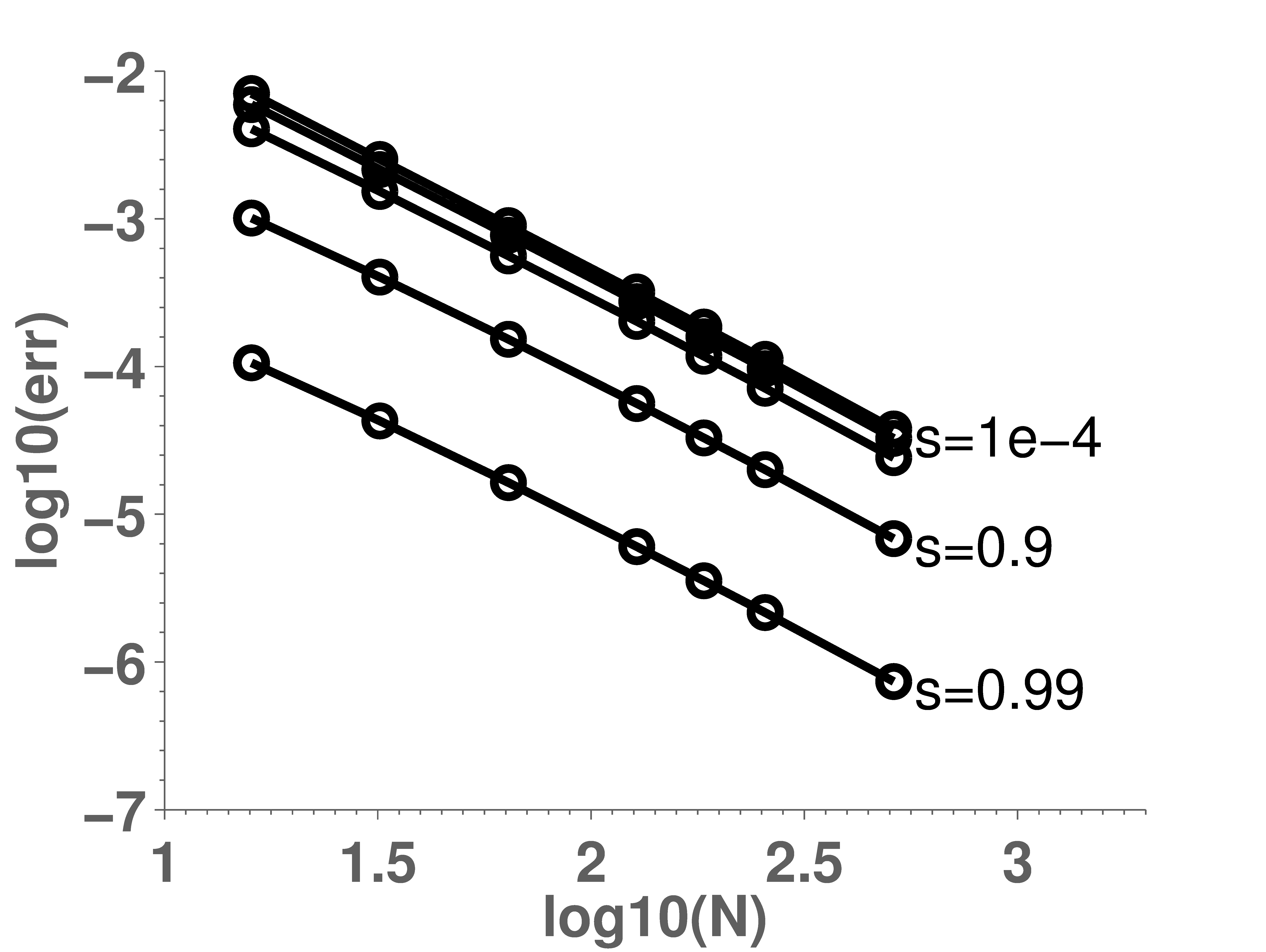}
\includegraphics[scale=0.35]{./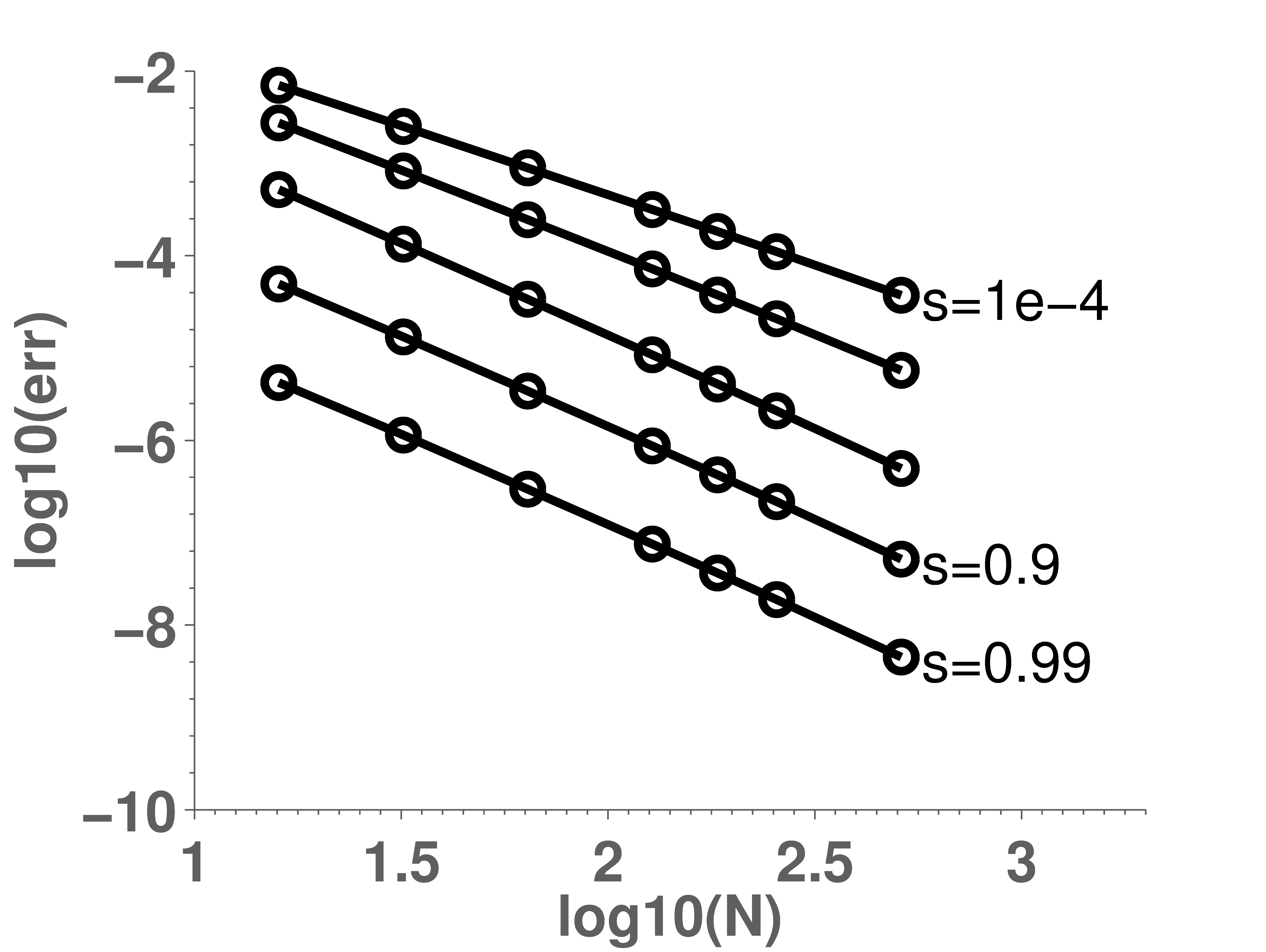}

\footnotesize
Left: errors in $H^{2s}_s(-1,1)$ norm of order $1.5$. Right: errors in $L^{2}_s(-1,1)$ norm, orders range from $1.5$ to $2$.
\label{fig_nonsmooth}
\end{figure}
\end{center}

Results concerning a problem containing the non-smooth right-hand side
$f(x) = |x|$ (for which, as can be checked in view of
Corollary~\ref{gegen_decay_coro} and Definition~\eqref{def:sobolev},
we have $f\in H^{3/2-\eps}_s(-1,1)$ for any $\eps >0$ and any $0\le s
\le 1$) are displayed in Fig.~\ref{fig_nonsmooth}. The errors decay
with the order predicted by Theorem~\ref{teo_sobolev_error} in the
$H^{2s}_s(-1,1)$ norm, and with a slightly better order than predicted
by that theorem for the $L_s^2(-1,1)$ error norm, although the
observed orders tend to the predicted order as $s\to 0$ (cf.
Remark~\ref{remark_negative_norm}).

%
 
\begin{center}
\begin{figure}[htbp]
  \begin{minipage}[t]{0.59\linewidth}
  \vspace{0pt}
  \centering
    \includegraphics[scale=0.27]{./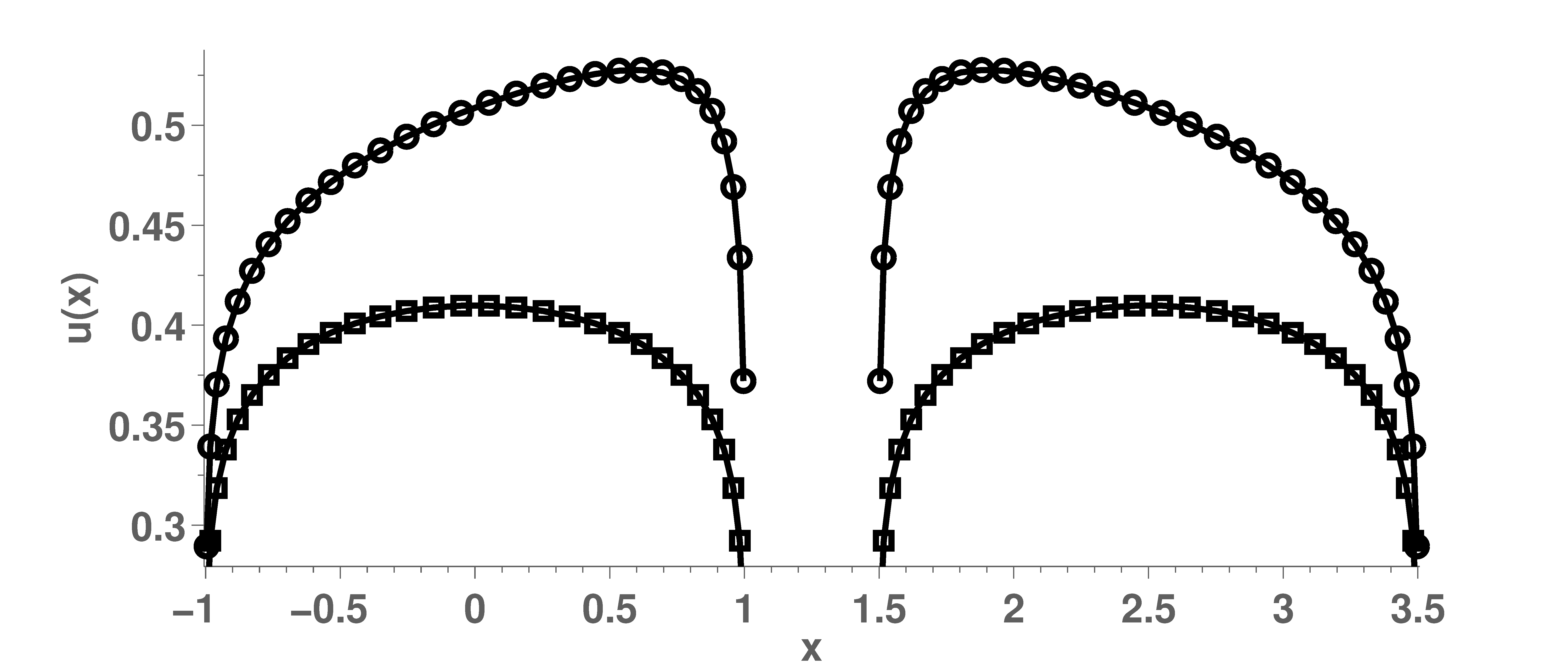}
  \end{minipage}
  \begin{minipage}[t]{0.4\linewidth}
  \vspace{0pt}
  \centering
      \begin{tabular}{ c | c }
      $N$ 	& rel. err. \\
      \hline
      \hline
	8      & 9.3134e-05    \\  
	12     & 1.6865e-06    \\  
	16     & 3.1795e-08    \\  
	20     & 6.1375e-10    \\  
	24     & 1.1857e-11    \\  
	28     & 1.4699e-13    \\
      \hline
      \end{tabular} 
\end{minipage}
\caption{Multiple (upper curves) vs. independent single-intervals
  solutions (lower curves) with $f=1$. A total of five GMRES
  iterations sufficed to achieve the errors shown on the right table
  for each one of the discretizations considered.}
\label{fig:multi}
\end{figure}
\end{center}
 
A solution for a multi-interval (two-interval) test problem with right
hand side $f=1$ is displayed in Figure~\ref{fig:multi}.  A total of
five GMRES iterations sufficed to reach the errors displayed for each
one of the discretizations considered on the right-hand table in
Figure~\ref{fig:multi}. The computational times required for each one
of the discretizations listed on the right-hand table are of the order
of a few hundredths of a second.

\section{Acknowledgments}

The authors thankfully acknowledge support from various agencies. GA
work was partially supported by CONICET, Argentina, under grant PIP
2014--2016 11220130100184CO. JPB's and MM's efforts were made possible
by graduate fellowship from CONICET, Argentina. OB efforts were
supported by the US NSF and AFOSR through contracts DMS-1411876 and
FA9550-15-1- 0043, and by the NSSEFF Vannevar Bush Fellowship under
contract number N00014-16-1-2808.

\appendix

\section{Appendix}
\subsection{Proof of Lemma \ref{lemma_exchangePV}} \label{app_exchangePV}Let $$F_\eps(x) = \int_{\Omega\setminus B_\eps(x)} \Phi_s(x-y) v(y) dy. $$ 
Then, by definition we have
\[
\lim_{\eps \to 0} \frac{d}{dx} F_\eps(x) =
P.V. \int_\Omega \frac{\partial}{\partial x} \Phi_s(x-y) v(y) dy.
\]
We note that interchanging the limit and differentiation processes on
the left hand side of this equation would result precisely in the
right-hand side of equation~\eqref{der_pv}---and the lemma would thus
follow. Since $F_\eps$ converges throughout $\Omega$ as
$\eps\to 0$, to show that the order of the limit and
differentiation can indeed be exchanged it suffices to
show~\cite[Th. 7.17]{baby_rudin} that the quantity $\frac{d}{dx}
F_\eps(x)$ converges uniformly over compact subsets $K\subset
\Omega$ as $\eps\to 0$.

To establish the required uniform convergence property over a given
compact set $K\subset \Omega$ let us first define a larger compact set
$K^*=[a,b]\subset \Omega$ such that $K\subset U \subset K^*$ where $U$
is an open set.  Letting $\eps_0$ be sufficiently small so that
$B_{\eps_0}(x) \subset K^*$ for all $x\in K$, for each
$\eps < \eps_0$ we may then write
\begin{equation*}
\label{eq_split_kstar}
\frac{\partial}{\partial x} F_\eps  =  \int_{\Omega\setminus K^*} \frac{\partial}{\partial x} \Phi_s(x-y) v(y)dy +  
\int_{K^* \setminus B_\eps(x)} \frac{\partial}{\partial x} \Phi_s(x-y) v(y)dy.
\end{equation*}
The first term on the right-hand side of this equation does not depend
on $\eps$ for all $x\in K$.  To analyze the second term we
consider the expansion $v(y) = v(x) + (y-x)R(x,y)$ and we write
$\int_{K^*} \frac{\partial}{\partial x} \Phi_s(x-y) v(y)dy =
\Gamma^1_\eps(x) + \Gamma^2_\eps(x)$ where
\begin{equation*} \begin{split}
\Gamma^1_\eps(x) & = v(x)\int_{K^* \setminus B_\eps(x)} \frac{\partial}{\partial x} \Phi_s(x-y) dy \quad \mbox{and} \\
\Gamma^2_\eps(x) & =  \int_{K^* \setminus B_\eps(x)} \frac{\partial}{\partial x} \Phi_s(x-y) (y-x)R(x,y)dy.
\end{split}
\end{equation*}
Since $K^*=[a,b]$, for each $\eps < \eps_0$ and each
$x\in K$ the quantity $\Gamma^1_\eps(x)$ can be expressed in the form
$$ \Gamma^1_\eps(x) = -v(x) \left( \Phi_s(x-y)\big|^{y=b}_{y =
    x+\eps} + \Phi_s(x-y)\big|^{y =x-\eps}_{y =a}
\, \right) $$
which, in view of the relation $\Phi_s(-\eps) = \Phi_s(\eps)$,
is independent of $\eps$. The uniform convergence of
$\Gamma^1_\eps(x)$ over $K$ therefore holds trivially.

The term $\Gamma^2_\eps(x)$, finally, equals
$$\int_{K^* \setminus B_\eps(x)}N(x-y)R(x,y),$$ 
where $N(x,y) = \frac{\partial}{\partial x} \Phi_s(x-y)(y-x)$. Since
$v\in C^1(\Omega)$ there exists a constant $C_{K,K^*}$ such that
$|R(x,y)|<C_{K,K^*}$ for all $(x,y)$ in the compact set $K \times K^*
\subset \Omega \times \Omega$. In particular, for each $x\in K$ the
product $N(x-y)R(x,y)$ is integrable over $K^*$, and therefore the
difference between $\Gamma^2_\eps(x)$ and its limit satisfies
$$\left|\Gamma^2_\eps(x) - \lim_{\eps \to 0} \Gamma^2_\eps(x) \right| = 
\left|\int_{x-\eps}^{x+\eps} N(x-y) R(x,y) dy\right| < C_{K,K^*}
\int_{-\eps}^{\eps} \left| N(z) \right|dz.$$
The uniform convergence of $\Gamma^2_\eps$ over the set $K$
then follows from the integrability of the function $N=N(z)$ around
the origin, and the proof is thus complete.

\subsection{Interchange of infinite summation and P.V. integration in
  equation
  \eqref{eq:series_Ks}} \label{appendix_pvseries}
\begin{lemma}
  Upon substitution of~\eqref{taylor_one_bnd}, the quantity $L^s_n$ in
  equation~\eqref{eq:Ks_n} equals the expression on the right-hand
  side of equation~\eqref{eq:series_Ks}. In detail, for each
  $x\in(0,1)$ we have
\begin{equation}\label{app_form}
  P.V. \int_{0}^{1} J^s(x-y) y^{s-1}  \left(\sum_{j=0}^{\infty}q_j y^j \right) y^n dy =  \sum_{j=0}^{\infty}  \left( \mbox{P.V.} \int_0^1 q_j y^j J^s(x-y)y^{s-1}y^n dy \right),
\end{equation}
where $J^s(z) = \sgn(z)|z|^{-2s}$.
\end{lemma}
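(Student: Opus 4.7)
The plan is to isolate the principal-value singularity at $y=x$ by means of a smooth cutoff. Fix $x\in(0,1)$, choose $\rho>0$ such that $[x-\rho, x+\rho]\subset(0,1)$, and let $\chi\in C_c^\infty(x-\rho,x+\rho)$ with $\chi\equiv 1$ on $[x-\rho/2, x+\rho/2]$. Split $L^s_n(x) = I_1 + I_2$, where $I_1$ integrates $J^s(x-y)\, y^{s+n-1}(1-y)^{s-1}$ against $(1-\chi(y))$ and $I_2$ against $\chi(y)$; the factor $1-\chi$ in $I_1$ kills the singularity at $y=x$, so $I_1$ is an ordinary absolutely convergent integral, whereas $I_2$ retains the principal value.

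For $I_1$, the integrand $|J^s(x-y)|\, y^{s+n-1}(1-y)^{s-1}(1-\chi(y))$ is integrable on $(0,1)$: near $y=0$ it behaves like $y^{s+n-1}$ (integrable since $s+n>0$), near $y=1$ like $(1-y)^{s-1}$ (integrable since $s>0$), and near $y=x$ it is bounded. Because $q_j=(1-s)_j/j!>0$ for $s\in(0,1)$, the partial sums $S_N(y)=\sum_{j=0}^N q_j y^j$ increase monotonically to $(1-y)^{s-1}$ on $[0,1)$; monotone convergence then yields
\[ I_1 = \sum_{j=0}^\infty q_j \int_0^1 J^s(x-y)\, y^{s+n+j-1}(1-\chi(y))\,dy. \]

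For $I_2$, the support of $\chi$ lies in a compact subset of $(-1,1)$, so the Taylor expansion of $(1-y)^{s-1}$ converges there uniformly together with all its derivatives. Writing $(1-y)^{s-1} = S_N(y) + R_N(y)$, the $S_N$ contribution is a finite sum which commutes trivially with the P.V., producing $\sum_{j=0}^N q_j\, P.V.\!\int J^s(x-y) y^{s+n+j-1}\chi(y)\,dy$. The crux is to show that the remainder piece
\[ E_N := P.V.\!\int J^s(x-y)\, H_N(y)\,dy,\qquad H_N(y):=y^{s+n-1}\chi(y)R_N(y), \]
satisfies $E_N\to 0$ as $N\to\infty$. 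Using the regularization identity
\[ E_N = \int_{x-\rho}^{x+\rho} J^s(x-y)\bigl(H_N(y)-H_N(x)\bigr)\,dy + H_N(x)\, P.V.\!\int_{x-\rho}^{x+\rho} J^s(x-y)\,dy, \]
the second term vanishes by the antisymmetry of $J^s$ about $y=x$, while the first is bounded by $\|H_N'\|_{L^\infty(\mathrm{supp}\,\chi)} \int_{-\rho}^{\rho}|t|^{1-2s}\,dt$. Since $R_N$ and $R_N'$ converge uniformly to zero on $\mathrm{supp}\,\chi$ (the Taylor series of $(1-y)^{s-1}$ and of its derivative converge uniformly on any compact subset of the open unit disk), $\|H_N'\|_{L^\infty}\to 0$ and hence $E_N\to 0$.

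The main obstacle is precisely the estimate on $E_N$ in the regime $s\geq 1/2$, where $|x-y|^{-2s}$ is not locally integrable and dominated convergence or Fubini cannot be applied directly to the full P.V.\ integral; the cutoff together with the regularization-by-antisymmetry device is the key tool that bypasses this difficulty. Adding $I_1$ to the limit of the $I_2$ expressions and recognizing that, for each $j$, $P.V.\!\int_0^1 J^s(x-y)\, y^{s+n+j-1}\,dy$ equals the sum of the $\chi$ and $1-\chi$ contributions (the P.V.\ being required only on $\mathrm{supp}\,\chi$), delivers the identity~\eqref{app_form}.
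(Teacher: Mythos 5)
Your proof is correct, and it takes a genuinely different route from the one in the appendix. The paper splits $(0,1)$ by a hard cutoff into $(0,\delta)\cup[\delta,1-\delta]\cup(1-\delta,1)$, treats the two edge pieces by monotone convergence, and on the middle piece writes $v_m(y)=v_m(x)+(x-y)R_m(x,y)$ and organizes a two-parameter limit in $(m,\eps)$ via dominated convergence on the factor $J^s(x-y)(x-y)R_m(x,y)$. You instead isolate the singularity by a smooth cutoff $\chi$, handle the $(1-\chi)$ piece by the same monotonicity of the $q_j$, and on the $\chi$ piece push all the work onto the uniform $C^1$ convergence of the Taylor remainder $R_N$ of $(1-y)^{s-1}$ on compacta: the regularization $H_N(y)-H_N(x)$ plus the odd symmetry of $J^s$ reduce $E_N$ to $\|H_N'\|_\infty\int_{-\rho}^\rho|t|^{1-2s}\,dt\to 0$. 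This is cleaner than the paper's argument because it yields a single quantitative bound $|E_N|\le C\|H_N'\|_\infty$ rather than a nested $\lim_m\lim_\eps$ exchange, and it sidesteps the paper's assertion that $R_m(x,y)$ is monotone (which is not obvious and is not actually needed for your estimate). One small cosmetic remark: for $I_1$ the monotone convergence theorem should be invoked after splitting the integral at $y=x$ (or equivalently by dominating with $|J^s(x-y)|\,y^{s+n-1}(1-\chi(y))(1-y)^{s-1}$), since $J^s(x-y)$ changes sign across $y=x$; this is a one-line fix and does not affect the argument.
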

\begin{proof}
  Let $x\in (0,1)$ be given. Then, taking $\delta < \min \{ x,1-x\}$
  we re-express the left hand side of~\eqref{app_form} in the form
\begin{equation}\label{app_form_2} \lim_{\eps \to 0} \left[\int_0^\delta dy + \int_{[\delta,1-\delta]\setminus B_\eps(x)} dy    + \int_{1-\delta}^1 dy \right] \left(\sum_{j=0}^{\infty} J^s(x-y)  q_j y^{s-1+n+j} \right ). 
\end{equation}
The leftmost and rightmost integrals in this expression are
independent of $\eps$, and, in view of \eqref{taylor_one_bnd}, they
are both finite.  The exchange of these integrals and the
corresponding infinite sums follows easily in view of the monotone
convergence theorem since the coefficients $q_j$ are all positive.


The middle integral in equation~\eqref{app_form_2}, in turn, can be
expressed in the form
\begin{equation}
\label{innter_integrals}
\lim_{\eps \to 0} \int_{[\delta,1-\delta]\setminus B_\eps(x)} J^s(x-y) \left( \lim_{m\to \infty} v_m(y) \right) dy,
\end{equation}
where 
\begin{equation}\label{vm_sum}
  v_m(y) = y^{s-1} y^n \sum_{j=0}^m q_j y^j.
\end{equation} In view
of~\eqref{taylor_one_bnd}, $v_m$ converges (uniformly) to the smooth
function $v_\infty(y) = y^{s-1} y^n(1-y)^{s-1}$ for all $y$ in the
present domain of integration.  As shown below, interchange of this uniformly
convergent series with the PV integral will then allow us to  
complete the proof of
the lemma. 

In order to justify this interchange we replace the expansion
$$ v_m(y) = v_m(x) + (x-y)R_m(x,y),  \quad \mbox{where } R_m(x,y) = \frac{v_m(y) - v_m(x)}{x-y}. $$
in~\eqref{innter_integrals} and we define
\begin{align}
\label{pv_vmx}
&F^1_\eps = v_\infty(x) \int_{[\delta,1-\delta]\setminus B_\eps(x)}
J^s(x-y) dy 
\\
\label{int_xyRm}
&F^2_\eps = \int_{[\delta,1-\delta]\setminus B_\eps(x)} J^s(x-y)(x-y)
\lim_{m\to\infty} R_m(x,y)dy;
\end{align}
clearly the expression in equation~\eqref{innter_integrals} equals
$\lim_{\eps \to 0} \left( F^1_\eps + F^2_\eps\right)$.

The exchange of $\lim_{\eps \to 0}$ and infinite summation for
$F^1_\eps$ (in~\eqref{pv_vmx}) follows immediately since $v_m(x)$
does not depend on $\eps$. In order to perform a similar exchange
for $F^2_\eps$ in~\eqref{int_xyRm} we first note that
\begin{equation}\label{lim_f2}
  \lim_{\eps \to 0} F^2_\eps =  \int_\delta^{1-\delta} J^s(x-y)(x-y) \lim_{m\to\infty} R_m(x,y)dy 
\end{equation} 
in view of the integrand's integrability---which itself follows from
the bound
\begin{equation}\label{bound}
  \left | J^s(x-y)(x-y) \lim_{m\to\infty} R_m(x,y)\right |\leq M  \left | J^s(x-y)(x-y)\right |,
\end{equation}
(where $M$ is a bound for the derivative $\left[ v_\infty(y)\right]'$
in the interval $[\delta,1-\delta]$) together with the integrability
of the product $\left | J^s(x-y)(x-y)\right |$.  But~\eqref{lim_f2}
equals
\begin{equation}\label{reverse_lim}\begin{split}
  \lim_{m\to\infty}\int_\delta^{1-\delta}& J^s(x-y)(x-y) R_m(x,y)dy = \\ 
	& = \lim_{m\to\infty}\lim_{\eps \to 0}\int_{[\delta,1-\delta]\setminus B_\eps(x)}J^s(x-y)(x-y)  R_m(x,y)dy.
\end{split} \end{equation}
Indeed, the first expression results from an application of the
dominated convergence theorem---which is justified in view
of~\eqref{bound} since $R_m(x,y)$ is an increasing sequence---while
the second equality, which puts our integral in ``principal value''
form, follows directly in view of the integrand's integrability.

The lemma now follows by substituting first $R_m(x,y) = (v_m(y) -
v_m(y))/(x-y)$ and then equation~\eqref{vm_sum} in the right-hand
integral of equation~\eqref{reverse_lim} and combining the result with
corresponding sums for $F^1_\eps$ and for the leftmost and rightmost
integrals in~\eqref{app_form_2}---to produce the desired right-hand
side in equation~\eqref{app_form}. The proof is now complete.
\end{proof}
\subsection{Interchange of summation order in
  \eqref{eq:series_ajk} for $x\in
  (0,1)$} \label{appendix_sumorder} 

Letting
$$a_{jk} = \frac{(1-s)_j}{j!} \frac{(2s)_k}{k!} \frac{1}{s-n-j+k} \, x^k, $$
in order to show that the summation signs in~\eqref{eq:series_ajk} can
be interchanged it suffices to show that the series $\sum_{j,k}
a_{jk}$ is absolutely convergent. To do this we write
\begin{align*}
\sum_{j=0}^\infty & |a_{jk}| =  \frac{(2s)_k}{k!} \, x^k \sum_{j=0}^\infty \frac{(1-s)_j}{j!} \frac{1}{|s-n-j+k|} = \\
& = \frac{(2s)_k}{k!} x^k \left( \sum_{j=0}^{ k-n }\frac{(1-s)_j}{j!} \frac{1}{s-n-j+k} 
+ \sum_{j= k-n+1 }^\infty \frac{(1-s)_j}{j!} \frac{1}{-s+n+j-k} \right). 
\end{align*}

Since $\frac{(1-s)_j}{j!}\sim j^{-s}$ as $j \to \infty$  
we obtain
$$\sum_{j = k-n+1 }^\infty \frac{(1-s)_j}{j!} \frac{1}{-s+n+j-k} \leq C \sum_{j = k-n+1 }^\infty \frac{j^{-s}}{-s+n+j-k}  \leq C(s)$$
and, in view of the fact that, in particular, $\frac{(1-s)_j}{j!}$ is
bounded,
$$\sum_{j=0}^{ k-n }\frac{(1-s)_j}{j!} \frac{1}{s-n-j+k} \leq \sum_{\ell=0}^{ k-n } \frac{1}{s+\ell} = \frac{1}{s} + \sum_{\ell=1}^{ k-n } \frac{1}{s+\ell}.$$
It follows that
$$\sum_{k=0}^\infty\sum_{j=0}^\infty |a_{jk}| \leq \sum_{k=0}^\infty \frac{(2s)_k}{k!} \left(C(s) + \sum_{\ell=1}^{k-n} \frac{1}{\ell} \right) x^k $$
and, since $\frac{(2s)_k}{k!} \sim k^{2s-1}$ and $\sum_{\ell=1}^{k-n}
\frac{1}{\ell} \sim \ln k $ as $k \to \infty$, the sum $\sum_{j,k}
a_{jk}$ is absolutely convergent for every $x \in (0,1)$, as needed.


\bibliographystyle{plain}
\bibliography{bib}

\end{document}